\newtheorem{Theorem}{Theorem}[section]
\newtheorem{Definition}[Theorem]{Definition}
\newtheorem{Proposition}[Theorem]{Proposition}
\newtheorem{Lemma}[Theorem]{Lemma}
\newtheorem{Corollary}[Theorem]{Corollary}
\newtheorem{Remark}[Theorem]{Remark}
\newtheorem{Example}[Theorem]{Example}
\newtheorem{Hypothesis}[Theorem]{Hypothesis}
\def\oo{\mathaccent23}
\def\R{\mathbb R}
\def\N{\mathbb N}
\def\O{\mathcal O}
\def\eps{\varepsilon}
\def\ds{\displaystyle}
\newcommand{\one}{1\!\!\!\;\mathrm{l}}
\newcommand{\Tr}{\operatorname{Tr}}
\newcommand{\di}{\operatorname{div}}
\newcommand{\Det}{\operatorname{Det}}
\newcommand{\sign}{\operatorname{sign}}
\title[Traces]{Traces of Sobolev functions on regular surfaces in infinite dimensions}
\author[P. Celada, A. Lunardi]{Pietro Celada, Alessandra Lunardi}
\address{
Dipartimento di Matematica\\
Universit\`a di Parma\\
Parco Area delle Scienze, 53/A\\
43124 Parma, Italy}
\email{alessandra.lunardi@unipr.it}
\email{pietro.celada@unipr.it}
\subjclass[2010]{46E35,28C20,26E15}
\keywords{Infinite dimension analysis, Sobolev spaces, surface measures, traces}
\begin{document}

\begin{abstract}  
In a Banach space $X$ endowed with a nondegenerate Gaussian measure,  we consider Sobolev spaces of real functions defined in a sublevel set $\O= \{x\in X:\;G(x) <0\}$ of a Sobolev nondegenerate function $G:X\mapsto \R$. We define the traces at $G^{-1}(0)$ of the elements of $W^{1,p}(\O, \mu)$ for $p>1$, as elements of  $L^1(G^{-1}(0), \rho)$ where $\rho$ is the surface measure of Feyel and de La Pradelle. The  range of the trace operator is contained in $L^q(G^{-1}(0), \rho)$ for $1\leq q<p$ and even in $L^p(G^{-1}(0), \rho)$ under further assumptions. 
If $\O$ is a suitable halfspace, the range is characterized as a sort of fractional Sobolev space at the boundary. 

An important consequence of the general theory is an integration by parts formula for Sobolev functions, which involves their traces at $G^{-1}(0)$.  
\end{abstract}

 \maketitle

\section{Introduction}
Let $X$ be a separable Banach space with norm $\|\cdot\|$,  endowed with a nondegenerate centered Gaussian measure $\mu$. The relevant Cameron--Martin space is denoted by $H$, its scalar product by $\langle \cdot , \cdot \rangle_H$ and its norm by $|\cdot |_H$. The covariance operator is denoted by $Q:X^*\mapsto X$, where $X^*$ is the dual space of $X$. 
 
We consider subsets of $X$ of the type $\O = \{x\in X: \;G(x) <0\}$ for suitable $G:X\mapsto \R$,  and Sobolev spaces of real valued functions defined in $\O$. 

The aim of this paper is to set the bases of the theory of traces  of  Sobolev functions at the level sets $\{x\in X:\;G(x) = 0\}$.  Such traces belong to $L^p$ spaces with respect to a ``natural"  measure on $G^{-1}(0)$, namely  the Hausdorff--Gauss surface measure $\rho$ of Feyel and de La Pradelle  \cite{FP}, which is a generalization of the Airault-Malliavin surface measure \cite{AM}. The latter is defined only on level sets of very smooth functions $G$, while in the applications to e.g. differential equations in infinite dimensional spaces the level sets are not usually so smooth. In this context smoothness is intended in terms of Sobolev regularity, not in terms of $C^k$ regularity. Precisely, we consider a (suitable version) of   $G$ satisfying $\mu(G^{-1}(-\infty, 0 )) >0$ and $G^{-1}(0)\neq \emptyset $ to avoid trivialities and pathological cases, and such that 

\begin{enumerate}
\item $G \in W^{2,q}(X , \mu)$ for each  $q>1$, 
\item there exists $  \delta >0$ such that $1/|D_HG |_H \in L^{q}(G^{-1}((-\delta, \delta), \mu))$ for each $q>1$. 
\end{enumerate}

The spaces $W^{2,q}(X , \mu)$ considered here are the usual Sobolev spaces of the Malliavin Calculus, and $D_HG $ denotes the generalized gradient of $G$ along $H$, see sect. \ref{Notation and preliminaries}.  For $\O $ and $G^{-1}(0)$ to be well defined we have to fix a version of $G$. If $X=\R^n$, $G$ has a version which belongs to $C^{1+\alpha}_{loc}(\R^n)$ for every $\alpha\in (0,1)$. For such a version, $\O$ is an open set and (2) implies that the gradient of $G$ does not vanish at $G^{-1}(0)$ (if $\O$ is bounded, (2) is in fact equivalent to the fact  that the gradient of $G$ does not vanish at $G^{-1}(0)$).  So, $G^{-1}(0)= \partial \O$ is $C^1$ hypersurface  that is locally $C^{1+\alpha}$ for every $\alpha\in (0,1)$.
In infinite dimensions there are no Sobolev embeddings, and $G$ may fail to have a continuous version. We fix once and for all a Borel version of $G$ which is  $C_{2,q}$-quasicontinuous for every $q$ (see sect. \ref{Notation and preliminaries} for precise definitions and references), that we still call $G$. Still, (1) and (2) may be seen as mild  regularity assumptions on $G$ and  on the set $G^{-1}(0)$. In particular, (2) is a mild non degeneracy assumption, and in fact functions satisfying (2) are sometimes called ``nondegenerate". Since the measure $\rho$ does not charge sets with vanishing $C_{1,q}$ capacities, the results are independent of the choice of the quasicontinuous version. However, on a first reading one may skip technicalities about Gaussian capacities and assume that $G$ is smooth. Indeed, the results are still meaningful for smooth $G$ and 
many difficulties and open problems are independent of the regularity of $G$. 

The Sobolev spaces $W^{1,p}(\O, \mu)$ are defined as the closure of the set of the Lipschitz continuos functions in the Sobolev norm. 
More precisely, we prove that the operator $Lip(\O)\mapsto   L^p(\O, \mu; H)$, $\varphi \mapsto  (D_H\widetilde{\varphi})_{|\O}$, is well defined and closable, and we denote by 
$W^{1,p}(\O, \mu)$ the domain of its closure (still denoted by $D_H$).  Here $\widetilde{\varphi}$ is any Lipschitz extension of $\varphi $ to the whole $X$. 

If $\O =X$, or if $X$ is finite dimensional,  there are other well known equivalent definitions of Sobolev spaces (e.g. \cite[Ch. 5]{Boga}), for instance through weak derivatives and through the powers of the standard Ornstein--Uhlenbeck operator.  In contrast, in the infinite dimensional case if $\O \neq X$ the equivalence of different reasonable definitions is not obvious. Our  choice is motivated by our approach to the trace theory. 

The starting point, and main tool for the definition of traces, is the integration formula
\begin{equation}
\label{parti}
\int_{\O} D_k\varphi \,d\mu =   \int_{\O} \hat{v}_k\varphi \,d\mu + \int_{G^{-1}(0)} \frac{D_k G}{|D_HG|_H }\varphi \,d\rho , \quad k\in \N ,
\end{equation}
that holds for every Lipschitz function $\varphi :X \mapsto \R$. 
Here $D_k\varphi= \langle D_H\varphi, v_k\rangle _H$ denotes the generalized derivative in the direction of $v_k$, where $\{v_k :\;k \in \N\}$ is an orthonormal basis of the Cameron--Martin space, and $ \hat{v}_k  $ is the element of $L^2(X, \mu)$ associated to $v_k$, namely the
unique  $g $ in the $L^2(X, \mu)$-closure of the dual space $X^*$   such that $x'(v_k) = \int_X x'(x)g(x)\,d\mu$ for every $x'\in X^*$.  If $\O = X$, \eqref{parti} without the surface integral is the usual integration formula for Gaussian measures (e.g., \cite[Ch. 5]{Boga}). If $\O\neq X$, the vector $D_HG/|D_HG|_H$ in the surface integral  plays the role of the unit exterior normal vector in the surface integral.

To prove \eqref{parti} we need to rework the Feyel's proof of the continuity of the densities of suitable measures, which is the subject of section \ref{sect:cont}, and constitutes the technical part of the paper.  

With the aid of \eqref{parti} and of its variants we follow the procedure of \cite{DPL} to show that  for every $1\leq q<p$ there exists $C_{p,q}$ such that 
\begin{equation}
\label{maggtraccia}
\int_{G^{-1}(0)} | \varphi|^q   \,d\rho  \leq C_{p,q} \|\varphi\|_{ W^{1,p}(\O, \mu)}^q ,
\end{equation}
for every Lipschitz continuous function $\varphi$. This allows to define the trace $\Tr \varphi$ at $G^{-1}(0)$ of every $\varphi \in W^{1,p}(\O, \mu)$ as an element of $L^q(G^{-1}(0),  \rho)$, for every $q\in [1, p)$, just  approximating by Lipschitz continuous functions. In this way, \eqref{parti}  and \eqref{maggtraccia} are satisfied by every $\varphi \in W^{1,p}(\O, \mu)$, with $\Tr \varphi$ replacing $\varphi$ in the surface integrals. Note that the integral $\int_{\O} \hat{v}_k\varphi\, d\mu$ is meaningful for every $\varphi\in  W^{1,p}(\O, \mu)$ since $ \hat{v}_k\in L^q(X, \mu)$ for every $q>1$. 

Under further hypotheses on the function $G$ the trace operator is bounded from $W^{1,p}(\O, \mu)$ to $L^p(G^{-1}(0),  \rho)$, for every $p>1$. But in several important examples such hypotheses are not fulfilled, and we can only prove that the trace operator is bounded from $W^{1,p}(\O, \mu)$ to $L^q(G^{-1}(0),  \rho)$ for $1\leq q<p$. This phenomenon is not related to the smoothness of $G$, for instance if $X$ is a Hilbert space and $G(x) = \|x\|^2-1$ then $\O$ is the unit open ball and $G$ is smooth, however we do not know whether the trace operator maps $W^{1,p}(B(0,1), \mu)$ into $L^p(\partial B(0,1),  \rho)$. A detailed discussion is in section \ref{Examples}, where this problem is reduced to the validity of a weak Hardy type inequality. 

Even when the trace maps $W^{1,p}(\O, \mu)$ to $L^p(G^{-1}(0),  \rho)$ continuously -- for instance, in the case of halfspaces or more generally of regions below graphs of good functions -- the characterization of the range of the trace operator is not an easy problem.  
In section \ref{Examples} we characterize the range of the trace operator when $\O$ is a suitable halfspace, namely when $G(x) =   \hat{h}(x) $  for some $\hat{h}\in X^*$. Then $h := Q(\hat{h})\in H$, $X$ is splitted as the direct sum of the one dimensional subspace spanned by $h$ and a complementary subspace $Y= (I-\Pi_h)(X)$, where
$\Pi_h(x) = \hat{h}(x)h$. This  decomposition   induces the decomposition   $\mu = \mu_1 \otimes \mu_Y$, where $\mu_1 $ is the standard Gaussian measure $N_{0,1}$ in $\R$, identified with the linear  span of $h$, and  $\mu_Y= \mu \circ (I-\Pi )^{-1}$  is a centered nondegenerate Gaussian measure in $Y$. 
After the identification of  $G^{-1}(0)= \{0\} \times Y$ with $Y$, we have $ \rho= \mu_Y$, and we prove that the  space of the traces at $G^{-1}(0)$ of the elements of $W^{1,p}(\O, \mu)$ coincides with the real interpolation space $(L^p(Y,  \mu_Y), W^{1,p}(Y,  \mu_Y))_{1-1/p, p}$. The latter may be characterized in several ways, using the realization of the standard Ornstein--Uhlenbeck operator $L_Y$ in $L^p(Y,  \mu_Y)$ and of the Ornstein--Uhlenbeck semigroup $T_Y(t)$. 
In particular, for $p=2$ the space $(L^2(Y,  \mu_Y), W^{1,2}(Y,  \mu_Y))_{1/2, 2}$ is precisely the domain of the operator $(I -L_Y)^{1/4}$ in $L^2(Y,  \mu_Y)$. 

In the case $X=\R^n$, $Y= \R^{n-1}$ it is well known that the interpolation space  $(L^p(Y,dy), $ $W^{1,p}(Y, dy))_{1-1/p, p}$ coincides with the fractional Sobolev space $W^{1-1/p, p}(Y, dy)$ if $dy$ is the Lebesgue measure. If   the Lebesgue measure is replaced by a nondegenerate Gaussian measure, a similar characterization does not hold. Of course there are inclusions:  we have
$(L^p(Y,  \mu_Y), W^{1,p}(Y,  \mu_Y))_{1-1/p, p}$ $\subset$ $W^{1-1/p, p}_{loc}(Y, dy)$, but to our knowledge for halfspaces the best global result in finite dimensions is the same as in infinite dimensions, namely we do not know any characterization of $(L^p(Y,  \mu_Y), W^{1,p}(Y,  \mu_Y))_{1-1/p, p}$ as a space defined in terms of  integrals over $Y\times Y$. 

However, we remark that comparison with the finite dimensional case may be misleading. As we already mentioned, under our assumptions if $X=\R^n$  then $G^{-1}(0)= \partial \O$ is a $C^1$ hypersurface. If it is compact, or more generally if it is uniformly $C^1$, then the trace operator is bounded from $ W^{1,p}(\O, dy) $ to $ W^{1-1/p,p}(\partial \O, dH_{n-1})$, where $dH_{n-1}$ is the usual Hausdorff surface measure. In infinite dimensions, even for very smooth functions $G$ the set $G^{-1}(0)$ is not compact, and  no reasonable extension of the notion of uniformly $C^1$ boundary seems to be appropriate in our context. 

A useful result of the paper is the extension of formula \eqref{parti} to Sobolev functions $\varphi\in W^{1,p}(\O, \mu)$, where ``$\varphi$" in the boundary integral is meant as the trace of $\varphi$. It is readily extended to vector fields $\Phi\in W^{1,p}(X, \mu; H)$, obtaining the familiar formula
\begin{equation}
\label{campi}
\int_{\O} \di\;\Phi\, d\mu = \int_{G^{-1}(0)}\langle \Tr  \Phi ,  \frac{ D_HG}{|D_HG|_H}\rangle_H \,d\rho ,
\end{equation}
where $\di$ is the Gaussian divergence.  

Traces of Sobolev functions at the boundaries of very smooth sets were already considered in the papers \cite{BDPT1,BDPT2,DPL,DPL2,AVN} in connection with differential equations  in regular subsets of Hilbert spaces, with homogeneous Neumann or Dirichlet boundary conditions. In such papers the interest was mainly focused on   null traces at the boundary. 

None of the results of this paper is ``surprising", to use a fashionable word. Indeed, what is made here is to extend to the infinite dimensional case well known results from the finite dimensional setting, with the due modifications. 
What is surprising is the amount of difficulties and false friends that we encountered in this extension, and this is why we gave complete details. 
The  elementary  questions that remain open for the moment show that, as far as Sobolev spaces are concerned,  the jump between the  finite and the infinite dimensional setting is considerably big.

%%%%%%%%%%%%%%%%%%%%%%%%%%%%%%%%%
\section{Notation and preliminaries.}
\label{Notation and preliminaries}
%%%%%%%%%%%%%%%%%%%%%%%%%%%%%%%%%
 
We denote by $Q$ the covariance of $\mu$ and we fix once and for all an orthonormal basis ${\mathcal V} =\{v_k:\;k\in \N\}$ of $H$.  We recall that if $X$ is a Hilbert space and $X^*$ is canonically identified with $X$, then $Q$ is a compact self--adjoint operator with finite trace and we can choose a basis $\{e_k:\;k\in \N\}$ of $X$  consisting of  eigenvectors of $Q$, $Qe_k= \lambda_ke_k$.  The space $H$ is just $Q^{1/2}(X)$ with the scalar product $\langle h_1, h_2\rangle_H= \langle Q^{-1/2}h_1,  Q^{-1/2}h_2\rangle_X$, and the set $\{v_k:= \sqrt{\lambda_k}e_k:\;k\in \N\}$ is an  orthonormal basis of $H$.

We say that a function $f:X\mapsto \R$ is $H$-differentiable at $x$ if there is $v\in H$ such that $f(x+h)-f(x) = \langle v, h\rangle_H + o(\|h\|_H)$, for every $h\in H$. In this case $v$ is unique, and we set $D_Hf(x) := v$. Moreover for every $k\in \N$ the directional derivative
$D_kf(x) : = \lim_{t\to 0}(f(x+tv_k) - f(x))/t$  exists and coincides with $\langle D_Hf(x), v_k\rangle_H$.

It is easy to see that if $f $ is Fr\'echet differentiable at $x$ (as a function from $X$ to $\R$), then it is $H$-differentiable. 
If $X$ is a Hilbert space and $f $ is Fr\'echet differentiable at $x$, then 
$D_H f(x) = QDf(x)$, where $Df(x)$ is the usual gradient. 
 
We consider the Gaussian Sobolev spaces $W^{k,p}(X, \mu)$, $k=1, 2$, $p\geq 1$. See e.g.  \cite[Sect. 5.2]{Boga}.  $W^{1,p}(X, \mu)$ and $W^{2,p}(X, \mu)$ are the completions of the smooth cylindrical functions (the functions of the type $f(x) = \varphi(l_1(x), \ldots, l_n(x))$, for some $ \varphi\in C^{\infty}_b(\R^n)$, $l_1, \ldots, l_n\in X^*$, $n\in \N$) in the norms 
$$\begin{array}{l}
\ds \|f\|_{W^{1,p}(X, \mu)} := \|f\|_{L^p(X, \mu)} + \bigg( \int_X \bigg(\sum_{k=1}^{\infty}  (D_{k}f(x))^2\bigg)^{p/2}\mu(dx)\bigg)^{1/p} 
\\
\\
\ds =  \|f\|_{L^p(X, \mu)} + \bigg( \int_X |D_Hf(x)|_H^{p}\mu(dx)\bigg)^{1/p}, \end{array}$$
$$\|f\|_{W^{2,p}(X, \mu)} := \|f\|_{W^{1,p}(X, \mu)} + \bigg( \int_X \bigg(\sum_{h, k=1}^{\infty}   (D_{hk}f(x))^2\bigg)^{p/2}\mu(dx)\bigg)^{1/p}. $$
Such spaces are in fact identified with subspaces of $L^p(X, \mu)$ and the (generalized) derivatives along $H$, $D_Hf$ and $D^2_Hf$ are well defined and belong to 
$L^p(X, \mu; H)$, $L^p(X, \mu;  {\mathcal H}_2)$, where ${\mathcal H}_2$ is the set of all Hilbert-Schmidt bilinear forms in $H$. The (generalized) directional derivatives of $f$ along any $v_k$ are defined by $D_kf(x) = \langle D_Hf(x), v_k\rangle_H$.

We shall use the integration  formula for  
$\varphi \in W^{1,p}(X, \mu)$, $p>1$:
\begin{equation}
\label{partiX} 
\int_{X} D_k\varphi \,d\mu =   \int_{X}\hat{v}_k\varphi \,d\mu , \quad k\in \N ,
\end{equation}
where $ \hat{v}_k \in L^2(X, \mu)$ is the element of $L^2(X, \mu)$ associated to $v_k$, namely the
unique  $g $ in the closure of $X^*$ in $L^2(X, \mu)$ such that $x'(v_k) = \int_X x'(x)g(x)\,d\mu$ for every $x'\in X^*$. 
If $v_k\in Q(X^*)$ then $ \hat{v}_k \in X^*$. In any case, $ \hat{v}_k \in L^q(X, \mu)$ for every $q>1$, so that the right hand side of \eqref{partiX}  makes sense. 
 If $X$ is a Hilbert space and the basis is chosen as above, $\hat{v}_k(x) =  \langle x, v_k\rangle_X /\lambda_k$ $= \langle x, e_k\rangle_X /\sqrt{\lambda_k}$ ($=\langle x, v_k\rangle_H $ for $x\in H$). 

\vspace{3mm}

The spaces $W^{1,p}(X, \mu; H)$ that we shall consider at the end of Sect. \ref{sect:traces} are defined similarly, replacing real valued smooth cylindrical functions by the space of  $H$-valued smooth cylindrical functions, namely  the linear span of the functions such as $x\mapsto f(x)h$, where $f$ is any real valued smooth cylindrical function and $h\in H$. 
For every measurable mapping $\Phi :X\mapsto X$ and for every smooth cylindrical $f$ we define
$$\partial _{\Phi} f(x) := \lim_{t\to 0} \frac{f(x+t\Phi (x)) - f(x)}{t} , $$
whenever such limit exists. If the limit exists for a.e. $x\in X$ and a function $\beta \in L^1(X, \mu)$ satisfies
$$ \int_X \partial _{\Phi} f(x) \mu(dx) = - \int_X f(x) \beta(x) \mu(dx) $$
for every smooth cylindrical $f$, $\beta$ is called {\em Gaussian divergence} of $v$ and denoted by $\di \;\Phi$. The Gaussian divergence is a linear bounded operator from $W^{1,p}(X, \mu; H)$ to $L^p(X, \mu)$, for every $p>1$. If $\Phi (x) = \sum_{k=1}^{\infty} \varphi_k(x) v_k$, then
$$\di\,\Phi = \sum_{k=1}^{\infty} (D_k\varphi_k - \varphi_k \hat{v}_k), $$
where the series converges in $L^p(X, \mu)$. Moreover, for every vector field $\Phi\in W^{1,p}(X, \mu; H)$ and $f\in W^{1,p'}(X, \mu)$ we have
$$\int_X \langle D_Hf, \Phi\rangle_H \mu(dx) = - \int_X f\, div\,\Phi\, \mu(dx). $$
See \cite[\S 5.8]{Boga}. 

\vspace{3mm}

Let us come back to real valued  functions. There are several characterizations of the Sobolev spaces, that will be used in the sequel. One of them is through the weak derivatives. Given $f\in L^p(X, \mu)$ and $h\in H$, a function $g\in  L^1(X, \mu)$ is called weak derivative of $f$ along $h$ if for every smooth cylindrical function $\varphi$ we have
$$\int_X \partial _h \varphi(x) f(x)\, \mu(dx) = -\int_X  \varphi(x) g(x) \,\mu(dx) + \int_X \varphi(x) f(x) \hat{h}(x)\,\mu(dx).$$
For $p>1$ the space $W^{1,p}(X, \mu)$ coincides with the set of all $f\in L^p(X, \mu)$ for which there exists a mapping $\Psi \in L^p(X, \mu; H)$ such that for every $h\in H$ the function $\langle \Psi(\cdot), h\rangle_H$ is the weak derivative of $f$ along $H$. In this case, we have $\Psi = D_Hf$  (\cite[\S 5.3, Cor. 5.4.7]{Boga}).

The Sobolev spaces may be characterized also through the powers of the realization of the Ornstein--Uhlenbeck operator in $L^p(X, \mu)$. More precisely, for $p>1$ the space $W^{1,p}(X, \mu)$ is the range of $(I-L_p)^{-1/2}$ and the space
$W^{2,p}(X, \mu)$ is the range of $(I-L_p)^{-1}$, where $L_p$ is the infinitesimal generator of the Ornstein--Uhlenbeck semigroup 
$$T(t)f(x) := \int_X f(e^{-t}x + (1-e^{-2t})^{1/2}y)\mu(dy), \quad t>0,  $$
in $L^p(X, \mu)$. Accordingly, for $k=1, 2$, the $C_{k,p}-$capacity of an open set $A\subset X$ is defined by 
$$\begin{array}{lll}
C_{k,p}(A) & :=  & \inf\{ \|f\|_{L^p(X, \mu)} : \; (I-L_p)^{-k/2}f \geq 1\;\mu-a.e.\;\mbox{\rm in} \;A\}
\\
\\
& = & \inf\{ \|(I-L_p)^{k/2}g\|_{L^p(X, \mu)} : \; g\geq  \one_A, \, g\in W^{k,p}(X, \mu)\}.
\end{array}$$
If  $B\subset X$ is not open, its $C_{k,p}$-capacity is the infimum of the $C_{k,p}$-capacities of the open sets that contain $B$. 

Let $k$ be either $1$ or $2$, $p>1$ and let $f\in W^{k,p}(X, \mu)$. Then $f$ is an equivalence class of functions, its elements are called {\em versions} of $f$. There exists a   version $\tilde{f}$  of $f$ that is Borel measurable and $C_{k,p}$-quasicontinuous, namely for each $\eps >0$ there is an open set $A\subset X$ such that $C_{k,p}(A)$  $\leq \eps$ and $\tilde{f}_{|X\setminus A}$ is continuous. Moreover, for every $r>0$, 
\begin{equation}
\label{cap}
C_{k,p}\{ x\in X:\; | \tilde{f}(x)|>r\} \leq \frac{1}{r}\|(I-L_p)^{-1/2} \tilde{f}\|_{L^p(X, \mu)}.
\end{equation}
See e.g. \cite[Thm. 5.9.6]{Boga}. Such 
$\tilde{f}$ is called {\em precise version} of $f$.  Two precise versions of the same $f$ coincide outside a set with null $C_{k,p}$-capacity. 
Moreover if $f\in \cap_{p>1}W^{k,p}(X, \mu)$ there exist Borel versions $\tilde{f}$ of $f$  that are $C_{k,p}$-quasicontinuous for every $p$. In the sequel we shall always consider one of such versions. The results will be independent on the choice of the version. 
 
 If $G:X\mapsto \R$ is any measurable  function, and $\varphi \in  L^1(X, \mu)$ has nonnegative values, the pull-back measure $\varphi \mu\circ G^{-1}$ is defined on the Borel sets $B$ of  $\R$ by
$$(\varphi \mu\circ G^{-1})(B) := \int_{G^{-1}(B)}\varphi(x) \mu(dx), $$
and it is a finite measure. 
If  $\varphi \in L^1(X, \mu)$ attains both positive and negative values,  $\varphi \mu\circ G^{-1}$ defined as above is a signed measure. 

For other aspects of Gaussian capacities and  Sobolev spaces with respect to Gaussian measures we refer to \cite[Ch. 5]{Boga}, \cite{FP1}.

%%%%%%%%%%%%%%%%%%%%%%%%%%%%%%%%%%%%%%%%%%%%%%%%%%%
\subsection{Surface measures}
\label{Surface measures}
%%%%%%%%%%%%%%%%%%%%%%%%%%%%%%%%%%%%%%%%%%%%%%%%%%%

We recall the definitions of the $1$-codimensional Hausdorff-Gauss measures that will be considered in the sequel. 

If $m\geq 2$, and $F=\R^m$ is equipped with a norm $|\cdot |$,   we define 
$$\theta^F(dx) := \frac{1}{(2\pi)^{m/2}} \exp(-|x|^2/2)H_{m-1}(dx), $$
$H_{m-1}$ being the spherical $m-1$ dimensional Hausdorff measure in $\R^m$, namely 
$$H_{m-1}(A) = \lim_{\delta\to 0} \inf\bigg\{ \sum_{i\in \N}\omega_{m-1}r_i^{m-1}:\;\cup_{i\in \N}B(x_i, r_i)\supset A, \;r_i <\delta \;\forall i \bigg\}$$
where $ \omega_{m-1} = \pi^{(m-1)/2}/\Gamma(1+(m-1)/2)$ is the Lebesgue measure of the unit sphere in $\R^{m-1}$. 
If $X$ is a separable Banach space endowed with a centered nondegenerate Gaussian measure $\mu$, let $H$ be the relevant Cameron--Martin space. For every finite dimensional subspace $F\subset H$ 
we consider  the orthogonal (along $H$) projection on $F$:
$$x\mapsto  \sum_{i=1}^{m}\langle x, f_i\rangle_H f_i ,\quad x\in H, $$
where $\{f_i:\;i=1, \ldots, m\}$ is any orthogonal basis of $F$. Then there
exists a $\mu$-measurable projection $\pi^F$ on $F$, defined in the whole  $X$, that extends it. Its existence is a consequence of   e.g. \cite[Thm. 2.10.11]{Boga}, which states that for every  $i$ there exists a unique  (up to changes on sets with vanishing measure) linear and $\mu$-measurable function $l_i :X\mapsto \R$ that coincides with  $x\mapsto \langle x, f_i\rangle_H$ on $H$. Then we set 
$$\pi^F(x) := \sum_{i=1}^{m}l_i(x)f_i.$$
If $f_i\in Q(X^*)$, $f_i = Q(\hat{f}_i)$ for some $\hat{f}_i\in X^*$, then  $\langle x, f_i\rangle_H = \hat{f}_i(x)$  for every $x\in H$ and the extension is obvious, $l_i(x)  = \hat{f}_i(x)$ for every $x\in X$. In particular, if $X$ is a Hilbert space, 
$l_i(x)  = \langle x, Q^{-1}f_i\rangle_X$. Still
in the case where $X$ is a Hilbert space, it is convenient to choose an orthonormal basis $\{e_k:\;k\in \N\}$ of $X$ made by eigenvectors of $Q$. If $Qe_k = \lambda_k e_k$, the function 
$l_i$ is the $L^2(X, \mu)$ limit of the sequences of cylindrical functions
$$l_i^m(x) :=  \sum_{k=1}^{m}\frac{\langle x, e_k\rangle_X  \langle f_i, e_k\rangle_X}{\lambda_k}, \quad m\in \N, $$
which  is called  $W_{Q^{-1/2}f_i}$ in \cite{DP}. 
If $F$ is spanned by a finite number of elements of the basis $\mathcal V = \{v_k:= \sqrt{\lambda_k}e_k:\;k\in \N\}$ of $H$, say $F= span\,\{ v_1, \ldots, v_m\}$, then 
$$\pi^F(x)  = \sum_{i=1}^{m}\langle x, Q^{-1}v_i\rangle_X v_i = \sum_{i=1}^{m}\langle x, e_i\rangle_Xe_i ,$$
namely $\Pi^F$ coincides with the orthogonal projection in $X$ over the subspace spanned by $e_1, \ldots e_m$. 

Let   $\widetilde{F}$ be  the kernel  of $\pi^F$. We denote by
$\mu^F$  the image measure of $\mu$ on $F$   through $\pi^F$, and by  $\mu_F$ the image measure of $\mu$ on   $\widetilde{F}$  through   $I-\pi^F$. We identify in a standard way $F$ with $\R^m$, namely the element $\sum_{i=1}^{m}x_i f_i\in F$ is identified with the vector $(x_1, \ldots, x_m)\in \R^m$, 
 and we consider the measure $\theta^F$ on $F$. 

We stress that the norm and the associated distance used in the definition of $\theta^F$ are inherited from the $H$-norm on $F$, not from the $X$-norm. For instance, if $X= 
\R^m = F$, then $d H_{m-1} = dS \circ Q^{-1/2}$ where $dS$ is the usual $m-1$ dimensional spherical Hausdorff measure. So, for every Borel set $A$, 
\begin{equation}
\label{thetaF}
\theta^F(A) = \frac{1}{(2\pi)^{m/2}}\int_{Q^{-1/2}(A)} e^{-|y|^2/2}dS. 
\end{equation}
%
% This is what corresponds to change variables in integrals over open sets, $y= Q^{-1/2}x$, in order to get integrals with the standard Gaussian weight $\exp (-|y|^2/2)$ instead of $\exp (-\langle Q^{-1}x, x\rangle/2)$ $ /($Det$\,Q)^{1/2}$. 

In the general case, for any Borel (or, more generally, Suslin) set $A\subset X$ we set 
\begin{equation}
\label{rhoF}
\rho^F(A) := \int_{ \widetilde{F}} \theta^F(A_x)d\mu_F(x), 
\end{equation}
where $A_x  := \{y\in F:\;x+y\in A\}$. By \cite[Prop. 3.2]{Feyel}, the map $F\mapsto \rho^F(A) $ is well defined (namely, the function $x\mapsto \theta^F(A_x)$ is measurable with respect to $\mu_F$) and increasing, i.e. if $F_1\subset F_2$ then $\rho^{F_1}(A) \leq \rho^{F_2}(A)$. This is sketched in \cite{Feyel}, a detailed proof is in \cite[Lemma 3.1]{Diego}.  By the way, this is the reason to choose the spherical Hausdorff measure in $\R^m$: if the spherical Hausdorff measure is replaced by  the usual Hausdorff measure, such a monotonicity condition may fail.

The Hausdorff--Gauss measure of Feyel--de La Pradelle  is defined by 
\begin{equation}
\label{FDP}
\rho(A) := \sup\{ \rho^F(A):\;F\subset H, \;\mbox{\rm finite dimensional subspace} \}
\end{equation}
Similar definitions were considered in \cite{Diego} and \cite{Hino}, respectively,
\begin{equation}
\label{Diego}
\rho_1(A) := \sup\{ \rho^F(A):\;F\subset Q(X^*), \;\mbox{\rm finite dimensional subspace} \},
\end{equation}
\begin{equation}
\label{Hino}
\rho_{{\mathcal V}}(A) := \sup\{ \rho^F(A):\;F\subset H, \;\mbox{\rm spanned by a finite number of elements of}\; {\mathcal V}  \},
\end{equation}
and moreover  in \cite{Hino} it was assumed  ${\mathcal V}\subset Q(X^*)$. 
Of course, $\rho(A) \geq \rho_1(A)$,  $\rho_1(A)\geq \rho_{{\mathcal V}}(A) $ if ${\mathcal V}\subset Q(X^*)$, and $\rho_{{\mathcal V}}(A) $ could depend on the choice of the basis ${\mathcal V}$ of $H$.  In  section \ref{sect:cont} we shall see that if $A$ is contained in a level set of a good function then $\rho(A)= \rho_1(A)= \rho_{{\mathcal V}}(A) $. 

An important property that will be used later is the following (\cite[Thm. 9]{FP}).

\begin{Proposition}
\label{pr:cap}
If $C_{1,p}(A) =0$ for some $p>1$, then $\rho^F(A) =0$ for every $F$, hence $\rho(A)=0$. 
\end{Proposition}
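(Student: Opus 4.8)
The plan is to fix a finite dimensional subspace $F\subset H$ with $\dim F=m\geq 2$ and to prove that $\rho^F(A)=0$; since $F$ is arbitrary, \eqref{FDP} then gives $\rho(A)=0$. First, since $C_{1,p}(A)=0$ there is a $G_\delta$ set $B\supseteq A$ with $C_{1,p}(B)=0$, obtained by intersecting open supersets of $A$ of arbitrarily small capacity, and $\rho^F(A)\leq\rho^F(B)$ because $A_x\subseteq B_x$ for every $x$ (here $A_x=\{y\in F:x+y\in A\}$); so we may assume $A$ Borel. By \eqref{thetaF} the measure $\theta^F$ on $F\cong\R^m$ has the strictly positive continuous density $(2\pi)^{-m/2}e^{-|y|^2/2}$ with respect to the $(m-1)$-dimensional spherical Hausdorff measure $H_{m-1}$ on $F$, hence $\theta^F$ and $H_{m-1}$ share the same negligible sets; in view of \eqref{rhoF}, proving $\rho^F(A)=0$ amounts to proving that $H_{m-1}(A_x)=0$ for $\mu_F$-a.e. $x\in\widetilde F$.

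The core is a slicing property of the $(1,p)$-capacity. We identify $X$ with $\widetilde F\times F$, under which $\mu$ is the product $\mu_F\otimes\mu^F$ of the nondegenerate Gaussian measures $\mu_F$ on $\widetilde F$ and $\mu^F$ on $F$ (a standard fact on Gaussian measures, see \cite{Boga}). I claim that $C_{1,p}(A)=0$ forces $C_{1,p}(A_x)=0$, for $\mu_F$-a.e. $x\in\widetilde F$, in the finite dimensional Gaussian Sobolev space $W^{1,p}(F,\mu^F)$. To this end, recall the standard characterization of sets of zero $C_{1,p}$-capacity (see \cite{Boga}): there is a nonnegative $u\in W^{1,p}(X,\mu)$ whose precise version $\widetilde u$ equals $+\infty$ at every point of $A$ (one builds $u=\sum_n u_n$ from $u_n\in W^{1,p}(X,\mu)$, $u_n\geq 0$, $\|u_n\|_{W^{1,p}(X,\mu)}\leq 2^{-n}$, $\widetilde u_n\geq 1$ quasi-everywhere on $A$, using \eqref{cap} and the definition of the capacity). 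By the Fubini theorem for Gaussian Sobolev functions and their precise versions (see \cite{Boga,FP1}), for $\mu_F$-a.e. $x\in\widetilde F$ the function $y\mapsto\widetilde u(x+y)$ is a precise version of an element of $W^{1,p}(F,\mu^F)$; being identically $+\infty$ on $A_x$, it forces $C_{1,p}(A_x)=0$ in $(F,\mu^F)$.

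To finish, I would invoke a classical fact of finite dimensional potential theory. Since $\mu^F$ has a density with respect to Lebesgue measure on $F\cong\R^m$ that is bounded away from $0$ and $\infty$ on every ball, and the vanishing of the $(1,p)$-capacity is a local property, $C_{1,p}(A_x)=0$ in $(F,\mu^F)$ is equivalent to the vanishing of the (Bessel, or Sobolev) $(1,p)$-capacity of $A_x$ in $\R^m$; and for $p>1$ a set of vanishing $(1,p)$-capacity in $\R^m$ has $H_d$-measure zero for every $d>m-p$ (and is empty when $p>m$), so in particular $H_{m-1}(A_x)=0$. Thus $H_{m-1}(A_x)=0$, hence $\theta^F(A_x)=0$, for $\mu_F$-a.e. $x$, and $\rho^F(A)=\int_{\widetilde F}\theta^F(A_x)\,d\mu_F(x)=0$; as $F$ was arbitrary, $\rho(A)=0$. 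The step I expect to be the main obstacle is the slicing property of the second paragraph: the Fubini-type passage from $W^{1,p}(X,\mu)$, and from its $C_{1,p}$-quasicontinuous versions, to the finite dimensional spaces $W^{1,p}(F,\mu^F)$ on $\mu_F$-almost every fibre, together with the compatibility of the two a priori different capacities — the infinite dimensional $C_{1,p}$ on $X$ and the finite dimensional one on $F$. This is a purely measure-theoretic matter, but it demands careful bookkeeping of the exceptional sets of null capacity and of the fact that Sobolev functions restrict well to fibres only after modification on negligible sets.
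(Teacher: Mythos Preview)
The paper does not supply its own proof of this proposition; it simply cites \cite[Thm.~9]{FP}. Your argument---reduce to a Borel superset, slice along the fibres of $\pi^F$ via a Fubini theorem for $C_{1,p}$-quasicontinuous versions to push the zero-capacity condition down to $(F,\mu^F)$ for $\mu_F$-a.e.\ fibre, and then invoke the classical finite-dimensional fact that zero $(1,p)$-capacity in $\R^m$ forces $H_{m-1}$-measure zero when $p>1$---is exactly the strategy of that reference, and your identification of the slicing step as the delicate point is accurate (the same slicing ingredient, stated as \cite[Thm.~4.5]{Feyel}, is used later in the paper in the proof of Theorem~\ref{cont}).
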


%%%%%%%%%%%%%%%%%%%%%%%%%%%%%%%%%%%%%%%%%%%%%%%
\subsection{Sobolev spaces on sublevel domains}
\label{Sobolev}
%%%%%%%%%%%%%%%%%%%%%%%%%%%%%%%%%%%%%%%%%%%%%%%%

In this section $G:X\mapsto \R$ is any  Borel version of an element of $W^{1,q}(X, \mu)$ for some $q>1$, and 
we assume  that  $\O := G^{-1}(-\infty, 0)$ has positive measure. We set as usual $q':=q/(q-1)$.
   
The Sobolev spaces $W^{1,p}(\O, \mu)$ will be defined taking Lipschitz functions as starting points. Let $\varphi\in Lip(\O)$. It is well known that $\varphi$ has a Lipschitz continuous  extension $\widetilde{\varphi}$ to the whole $X$, with the same Lipschitz constant $L$ of $\varphi$. For instance, we can take the McShane extension $\widetilde{\varphi}(x) := \sup\{ \varphi(y) - L\|x-y\|:\;y\in \O\}$. 
%(\cite{McS}).  
Since $Lip(X)\subset W^{1,p}(X, \mu)$ for every $p\geq 1$  (\cite[Ex. 5.4.10]{Boga}), $D_H \widetilde{\varphi}$ is well defined. For any other extension 
$\widetilde{\varphi}_1 \in W^{1,p}(X, \mu)$ for some $p$, we have $D_H \widetilde{\varphi}_{|\O} = D_H \widetilde{\varphi}_{1|\O} $, a.e. in $\O$, by \cite[Lemma 5.7.7]{Boga}. Hence, we can define
$$D_H: \;Lip(\O)\mapsto L^p(\O, \mu; H)$$
as 
$$D_H\varphi := D_H \widetilde{\varphi}_{|\O}, \quad \mbox{\rm for any Lipschitz continuous extension}\;\widetilde{\varphi}\; \mbox{\rm of}\;\varphi.$$

We need the following lemma, about the closability of $D_H$. 
 
\begin{Lemma}
\label{Le:chiusura}
Let $p\geq q'$. Then the operator $D_H: Lip (\O)\mapsto L^p(\O, \mu; H)$ defined above is closable in $L^p(\O, \mu)$. 
\end{Lemma}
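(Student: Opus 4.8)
The goal is to show that if $p \geq q'$ and $\varphi_n \in Lip(\O)$ satisfies $\varphi_n \to 0$ in $L^p(\O,\mu)$ and $D_H\varphi_n \to \Psi$ in $L^p(\O,\mu;H)$, then $\Psi = 0$ a.e.\ in $\O$. The natural route is to test the convergence against the integration by parts formula \eqref{partiX} on the whole space $X$, but localized to $\O$ via the densities of the pull-back measures $\varphi\mu \circ G^{-1}$. First I would fix a Lipschitz extension $\widetilde{\varphi}_n$ of $\varphi_n$ to $X$ (say the McShane extension), but since only the restriction to $\O$ is controlled in $L^p$, I cannot directly use \eqref{partiX} for $\widetilde{\varphi}_n$; instead I would multiply by a cutoff depending on $G$.

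\medskip

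The concrete mechanism: for a smooth nonincreasing $\theta:\R \to [0,1]$ with $\theta = 1$ on $(-\infty,-2\eps]$ and $\theta = 0$ on $[-\eps,\infty)$, consider the functions $\theta(G)\,\widetilde{\varphi}_n$, which are supported in $\{G < -\eps\} \subset \O$ and lie in $W^{1,p'}$-type spaces thanks to hypothesis (1) on $G$ (so that $D_H(\theta(G)) = \theta'(G)D_HG \in L^r$ for all $r$) and the Lipschitz bound on $\widetilde{\varphi}_n$. Apply \eqref{partiX} to the product $f\cdot\theta(G)\widetilde{\varphi}_n$ for $f$ a smooth cylindrical function and for each coordinate direction $v_k$:
$$
\int_X D_k\big(f\,\theta(G)\,\widetilde{\varphi}_n\big)\,d\mu = \int_X \hat{v}_k\, f\,\theta(G)\,\widetilde{\varphi}_n\,d\mu .
$$
Expanding the left side by the Leibniz rule gives three terms; the one containing $D_k\widetilde{\varphi}_n = (D_H\varphi_n)_k$ on the support of $\theta(G)$ converges to $\int_X f\,\theta(G)\,\Psi_k\,d\mu$ (using $D_H\varphi_n \to \Psi$ in $L^p(\O;H)$ and that $f\theta(G) \in L^{p'}$, which is where $p \geq q'$, equivalently $p' \leq q$, is needed to match the integrability of $D_HG$ and hence of $\theta(G)$-weighted quantities). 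The remaining terms — $\int f\,\theta'(G)\,(D_kG)\,\widetilde{\varphi}_n\,d\mu$, $\int (D_kf)\,\theta(G)\,\widetilde{\varphi}_n\,d\mu$, and the right-hand side — all contain $\widetilde{\varphi}_n$ (not its gradient) multiplied by an $L^{p'}(X,\mu)$ function, hence tend to $0$ because $\widetilde{\varphi}_n \to 0$ in $L^p(\O,\mu)$ on the relevant support $\{G<-\eps\}$. Therefore $\int_X f\,\theta(G)\,\Psi_k\,d\mu = 0$ for every smooth cylindrical $f$, every $k$, and every admissible $\theta$; letting $\eps \downarrow 0$ we get $\int_{\O} f\,\Psi_k\,d\mu = 0$ for all such $f$, whence $\Psi_k = 0$ a.e.\ in $\O$ for each $k$, i.e.\ $\Psi = 0$.

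\medskip

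The main obstacle is the integrability bookkeeping and the justification that $\theta(G)\widetilde{\varphi}_n$ is a legitimate test object for the integration by parts formula \eqref{partiX}: one must check $\theta(G)\widetilde{\varphi}_n$ (and its products with cylindrical $f$) actually lie in $W^{1,r}(X,\mu)$ for a suitable $r>1$, which uses that $G \in W^{2,q'}(X,\mu) \subset W^{1,q'}(X,\mu)$ and that $\theta$ is bounded with bounded derivative, together with $\widetilde{\varphi}_n \in Lip(X) \subset \cap_{r}W^{1,r}(X,\mu)$; the chain rule for $\theta(G)$ in Gaussian Sobolev spaces (\cite[\S 5.2]{Boga}) handles this. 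A secondary point is that one should approximate $\widetilde{\varphi}_n$ by smooth cylindrical functions, or equivalently invoke the known extension of \eqref{partiX} to $W^{1,r}$ functions, to make the manipulation rigorous; this is routine given the density of cylindrical functions. Finally, the exponent constraint $p \geq q'$ enters precisely at the step where $f\theta(G)$ must be paired in $L^{p'}$ with $D_H\varphi_n \in L^p$: since $\theta(G)$ is bounded this is automatic for $f$ bounded, so in fact the constraint is used more delicately in controlling terms like $\theta'(G)D_kG$, which is only in $L^{q}$, against $\widetilde{\varphi}_n$-type factors — and there $p' \le q$ is exactly what is required.
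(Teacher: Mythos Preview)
Your proof is correct and follows essentially the same route as the paper: localize to $\O$ by a smooth cutoff $\theta(G)$ (the paper calls it $\eta_n(G)$), apply the integration-by-parts formula \eqref{partiX} on $X$ to the product of the Lipschitz sequence with this cutoff and a test function, and identify the constraint $p\ge q'$ (equivalently $p'\le q$) as arising from the pairing of the $L^p$-vanishing sequence against the term $\theta'(G)D_kG\in L^q$. The only cosmetic differences are that the paper tests against Lipschitz $u$ rather than smooth cylindrical $f$, and your aside that $G\in W^{2,q'}$ should read $G\in W^{1,q}$, but neither affects the argument.
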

\begin{proof}
Let  $f_k \in Lip (\O)$ be such that  $f_k\to 0$ in $L^p(\O, \mu)$ and  $D_Hf_k\to \Phi$ in $L^p(\O, \mu; H)$ as $k\to \infty$. Without loss of generality we may assume that each $f_k$ is defined and Lipschitz continuous in the whole $X$, so that it belongs to $W^{1,q}(X, \mu)$ for every $q>1$. 
We have to  show that  
$\int_{\O} \langle  \Phi, v_i\rangle_H u\,d\mu =0$ for each $i\in \N$ and  $u\in  L^{p'}(\O, \mu)$. Since the restrictions to $\O$ of the Lipschitz continuous functions on $X$ are dense in $ L^{p'}(\O, \mu)$, as a consequence of the density of  $Lip(X)$ in $L^{p'}(X, \mu)$, it is enough to show that 
\begin{equation}
\label{nullo}
\int_{\O} \langle \Phi(x), v_i\rangle_H\, u(x)\,d\mu =0, \quad u\in  Lip(X).
\end{equation}
To this aim we approach every Lipschitz continuous $u$ by functions belonging to $W^{1,p'}(X,\mu)$ that vanish in $\O^c$. 
Fix a smooth $\eta:\R \mapsto \R$ such that  $\eta(r) =0$ for $r\geq -1$, $\eta(r) =1$ for $r\leq -2$, and set   $\eta_n(r):= \eta (nr)$. Then set $u_n(x) := u(x)\eta_n(G (x))$, for each  $n\in \N$ and $x\in X$. By dominated convergence the sequence  $(u_n)$ goes to  $u$ in $L^{p'}(\O, \mu)$ as $n\to \infty$. Therefore 
$$\int_{\O} \langle \Phi(x), v_i\rangle_H  u(x)\,d\mu = \lim_{n\to \infty} \int_{\O}  \langle \Phi(x), v_i\rangle_H u_n(x)\,d\mu .$$
Each  $u_n$ belongs to $W^{1,q}(X, \mu)$, and  $D_iu_n(x) = D_iu(x) \,\eta_n( G(x)) + u(x)\eta_n'(G(x))\,D_iG(x)$ for a.e. $x\in  X$,  for each  $i\in \N$. 

The integration by parts formula  \eqref{partiX}  yields
$$\int_{\O} D_i(f_ku_n) d\mu = \int_{X} D_i(f_ku_n) d\mu =   \int_{X} \hat{v}_i f_ku_n\,d\mu=    \int_{\O}\hat{v}_i f_ku_n\,d\mu$$
so that 
$$\int_{\O} u_n \,D_if_k\,d\mu = -\int_{\O} f_kD_iu_n\,d\mu +   \int_{\O} \hat{v}_i f_ku_n\, d\mu$$
and letting   $k\to \infty$, the left hand side goes to $\int_{\O} u_n \, \langle \Phi, v_i\rangle_H\,  d\mu$ and the right hand side goes to $0$. Therefore $\int_{\O} u_n \, \langle \Phi, v_i\rangle_H\, d\mu =0$ for each  $n$ and  \eqref{nullo} holds. 

The restriction $p\geq q'$ comes from the integral $\int_{\O} f_kD_iu_n\,d\mu$, where $f_k\to 0$ in $L^p(\O, \mu)$ and $D_iu_n\in  L^q(\O, \mu)$. 
 \end{proof}

\begin{Definition}
For $p\geq q'$  the Sobolev space $W^{1,p}(\O, \mu)$ is defined as the domain of the closure of $D_H$ (still denoted by $D_H$) in $L^{p}(\O, \mu; H)$. 
\end{Definition}

$W^{1,p}(\O, \mu)$ is a Banach space with the graph norm 
$$\begin{array}{lll}
\ds \|f\|_{W^{1,p}(\O, \mu)} & := &\ds  \|f\|_{L^p(\O, \mu)} +  \bigg( \int_{\O} |D_Hf(x)|_H^{p}\mu(dx)\bigg)^{1/p}
\\
\\
& =& \ds   \|f\|_{L^p(\O, \mu)} +
\bigg( \int_\O \bigg(\sum_{k=1}^{\infty}   (D_{k}f(x))^2\bigg)^{p/2}\mu(dx)\bigg)^{1/p} , \end{array}$$
where  $D_{k}f(x): = \langle D_Hf(x), v_k\rangle_H$.

Note that the restrictions to $\O$ of the elements of $W^{1,p}(X, \mu)$ belong to $W^{1,p}(\O, \mu)$. Indeed, since $C^1_b(X)$ is dense in $W^{1,p}(X, \mu)$, then each $f\in W^{1,p}(X, \mu)$ may be approached by a sequence of Lipschitz continuous functions, whose restrictions to $\O$ are a Cauchy sequence in 
$W^{1,p}(\O, \mu)$ that converge to $f_{|\O}$ in $L^{p}(\O, \mu)$. Then, $f_{|\O} \in W^{1,p}(\O, \mu)$. 
 
In fact, smaller subspaces consisting of smoother functions are dense in $W^{1,p}(\O, \mu)$, as the next proposition shows.

\begin{Proposition}
\label{approx}
Let $\mathcal D$ be a dense subspace of  $W^{1,p}(X, \mu)$ such that the restrictions to  $\O$ of the elements of  $\mathcal D$ are Lipschitz continuous in $\O$. Then the restrictions to  $\O$ of the elements of  $\mathcal D$ are dense  in $W^{1,p}(\O, \mu)$. 
\end{Proposition}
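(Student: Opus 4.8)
The plan is to exploit that $W^{1,p}(\O,\mu)$ is, by its very construction, the closure of $Lip(\O)$ in the Sobolev norm; hence it suffices to approximate an arbitrary $\varphi\in Lip(\O)$, in the norm of $W^{1,p}(\O,\mu)$, by restrictions to $\O$ of elements of $\mathcal D$. The first step is to record that the restriction map $R\colon W^{1,p}(X,\mu)\to W^{1,p}(\O,\mu)$, $Rf:=f_{|\O}$, is well defined, linear and a contraction. This is essentially contained in the paragraph preceding the statement: if $f\in W^{1,p}(X,\mu)$ and $h_n\in Lip(X)$ converge to $f$ in $W^{1,p}(X,\mu)$, then $\|Rh_n-Rh_m\|_{W^{1,p}(\O,\mu)}\le\|h_n-h_m\|_{W^{1,p}(X,\mu)}$, since the norm of $W^{1,p}(\O,\mu)$ is obtained from that of $W^{1,p}(X,\mu)$ by integrating only over $\O\subset X$; thus $(Rh_n)$ is Cauchy in $W^{1,p}(\O,\mu)$, it converges to $f_{|\O}$ in $L^p(\O,\mu)$, so $f_{|\O}\in W^{1,p}(\O,\mu)$ and $\|Rf\|_{W^{1,p}(\O,\mu)}\le\|f\|_{W^{1,p}(X,\mu)}$.

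The density itself is then immediate. Since $\mathcal D$ is dense in $W^{1,p}(X,\mu)$ and $R$ is continuous, for every $f\in W^{1,p}(X,\mu)$ the element $Rf$ belongs to the closure of $R(\mathcal D)$ in $W^{1,p}(\O,\mu)$; in particular this closure contains $R(Lip(X))$. Now every $\varphi\in Lip(\O)$ has a Lipschitz extension $\widetilde\varphi\in Lip(X)$ (for instance the McShane extension), so $R(Lip(X))=Lip(\O)$, and $Lip(\O)$ is dense in $W^{1,p}(\O,\mu)$ by the definition of the latter as the domain of the closure of $D_H$ issued from $Lip(\O)$. Hence the closure of $R(\mathcal D)$ contains the closure of $Lip(\O)$, i.e. all of $W^{1,p}(\O,\mu)$; since $R(\mathcal D)\subset W^{1,p}(\O,\mu)$ by hypothesis, $R(\mathcal D)$ is dense in $W^{1,p}(\O,\mu)$, which is the claim.

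The only point requiring care — a mild one — is the justification, used in the first step, that for $g\in\mathcal D$ the restriction $g_{|\O}$ indeed lies in $W^{1,p}(\O,\mu)$ with $D_H(g_{|\O})=(D_Hg)_{|\O}$, although $g$ need not be globally Lipschitz on $X$. This is exactly what the boundedness of $R$ encodes, and it reduces to the compatibility of the generalized gradient with restrictions (\cite[Lemma 5.7.7]{Boga}), already invoked in Lemma \ref{Le:chiusura} and in the construction preceding the Definition. No further idea is needed; the argument works for every $p\ge q'$, which is the range in which $W^{1,p}(\O,\mu)$ is defined.
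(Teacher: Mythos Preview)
Your proof is correct and follows essentially the same route as the paper's: approximate an arbitrary element of $W^{1,p}(\O,\mu)$ by a Lipschitz function, extend it to $X$, approximate the extension by an element of $\mathcal D$ in $W^{1,p}(X,\mu)$, and restrict back, using that restriction is a contraction. You phrase this via closures of $R(\mathcal D)$ while the paper does the explicit $\varepsilon$-chain, but the ingredients and their order are identical.
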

\begin{proof}
Let  $f\in W^{1,p}(\O, \mu)$ and  $\eps >0$. Let  $\varphi \in Lip (\O)$ be such that  $\|f-\varphi\|_{W^{1,p}(\O, \mu)} \leq \eps$, let $\hat{\varphi}\in Lip (X)$ be  any Lipschitz extension of  $\varphi$ and  let $\psi\in 
 \mathcal D$ be such that 
$\| \varphi -\psi\|_{W^{1,p}(X, \mu)} \leq \eps$. Since   $\psi_{|\O}$ is Lipschitz continuous, then it belongs to $W^{1,p}(\O, \mu)$, moreover  $\| f -\psi_{|\O} \|_{W^{1,p}(\O, \mu)} \leq 2\eps$.
\end{proof}

Note that we can take as $\mathcal D$ the space of the smooth cylindrical functions, as well as the space of the exponential functions  (that is, the span of the functions of the type $x\mapsto e^{i\langle x, h\rangle}$ with $h\in X$) used in \cite{DPZ} when $X$ is a Hilbert space.   
 
As a consequence of Proposition \ref{approx} we get the following lemma, that will be used later. 

\begin{Lemma}
\label{product}
Let $\varphi\in W^{1,p}(\O, \mu)$, $\psi\in W^{1,q}(\O, \mu)$ with $pq/(p+q)>1$ (namely, $p>q'$). Then $\varphi \,\psi\in W^{1,r}(\O, \mu)$ for every $r\in [1, pq/(p+q)]$, and $D_H(\varphi\,\psi) = \psi D_H\varphi + \varphi D_H\psi$. 
\end{Lemma}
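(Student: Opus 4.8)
The plan is to establish the Leibniz formula first for smooth cylindrical functions, where it is elementary, and then to obtain the general case by density combined with the closedness of $D_H$ in the appropriate $L^r$-space. Set $r_0 := pq/(p+q)$, so that $1/r_0 = 1/p + 1/q$ and $r_0 > 1$ by hypothesis. By Proposition~\ref{approx}, applied with $\mathcal D$ the space of smooth cylindrical functions (see the remark following that proposition), the restrictions to $\O$ of smooth cylindrical functions are dense both in $W^{1,p}(\O,\mu)$ and in $W^{1,q}(\O,\mu)$; so I choose smooth cylindrical $\varphi_n,\psi_n$ with $\varphi_n\to\varphi$ in $W^{1,p}(\O,\mu)$ and $\psi_n\to\psi$ in $W^{1,q}(\O,\mu)$. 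Each product $\varphi_n\psi_n$ is again smooth cylindrical, hence its restriction to $\O$ lies in $W^{1,r_0}(\O,\mu)$, and the classical product rule gives $D_H(\varphi_n\psi_n) = \psi_n D_H\varphi_n + \varphi_n D_H\psi_n$ a.e.\ in $\O$.

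Next I pass to the limit. Since $\mu$ is a probability measure and $1/r_0 = 1/p+1/q$, Hölder's inequality yields
\[
\|\varphi_n\psi_n-\varphi\psi\|_{L^{r_0}(\O,\mu)}\le \|\varphi_n\|_{L^p(\O,\mu)}\,\|\psi_n-\psi\|_{L^q(\O,\mu)}+\|\varphi_n-\varphi\|_{L^p(\O,\mu)}\,\|\psi\|_{L^q(\O,\mu)},
\]
which tends to $0$ because $\|\varphi_n\|_{L^p(\O,\mu)}$ is bounded. Splitting $\psi_n D_H\varphi_n-\psi D_H\varphi$ and $\varphi_n D_H\psi_n-\varphi D_H\psi$ in the same way and using the boundedness of $\|\psi_n\|_{L^q(\O,\mu)}$, $\|\varphi_n\|_{L^p(\O,\mu)}$ together with the convergences $D_H\varphi_n\to D_H\varphi$ in $L^p(\O,\mu;H)$ and $D_H\psi_n\to D_H\psi$ in $L^q(\O,\mu;H)$, one gets $\psi_n D_H\varphi_n+\varphi_n D_H\psi_n\to\psi D_H\varphi+\varphi D_H\psi$ in $L^{r_0}(\O,\mu;H)$. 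Since the operator $D_H$ on $W^{1,r_0}(\O,\mu)$ is closed, being by definition the closure of the closable operator of Lemma~\ref{Le:chiusura}, it follows that $\varphi\psi\in W^{1,r_0}(\O,\mu)$ with $D_H(\varphi\psi)=\psi D_H\varphi+\varphi D_H\psi$. For $1\le r<r_0$ the claim is then immediate from the continuous inclusion $W^{1,r_0}(\O,\mu)\hookrightarrow W^{1,r}(\O,\mu)$, which holds because $\mu$ is finite.

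The only point requiring real care is the very first step: one must approximate $\varphi$ and $\psi$ by restrictions to $\O$ of globally defined smooth functions — not merely by arbitrary elements of $W^{1,p}(\O,\mu)$ — so that the elementary product rule is available and the resulting derivatives are controlled in the correct norms. This is precisely the reason Proposition~\ref{approx} was proved beforehand; once it is at hand, the rest is Hölder's inequality and the abstract closedness of $D_H$.
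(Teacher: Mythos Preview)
Your proof is correct and follows essentially the same route as the paper's own argument: approximate $\varphi$ and $\psi$ by restrictions of smooth cylindrical functions via Proposition~\ref{approx}, use the elementary product rule for those, and pass to the limit with H\"older's inequality and the closedness of $D_H$. Your write-up is in fact more explicit than the paper's, which compresses the convergence steps into ``As easily seen''.
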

\begin{proof} Let $(\varphi_n)$, $(\psi_n)$ be sequences of smooth cylindrical functions whose restrictions to $\O$ converge to $\varphi$, $\psi$, in $ W^{1,p}(\O, \mu)$, $W^{1,q}(\O, \mu)$ respectively. Such sequences exist by Proposition \ref{approx}. As easily seen, $(\varphi_n\, \psi_{n|\O})$ converges to $\varphi \,\psi$ in $L^r(\O, \mu)$, and since $D_H(\varphi_n\psi_n ) = \psi_n D_H\varphi_n+ \varphi_n D_H\psi_n$ for every $n$, then the sequence 
$((D_H(\varphi_n\psi_n ))_{\O} )$ converges to $\psi D_H\varphi + \varphi D_H\psi$ in $L^r(\O, \mu)$. 
\end{proof}

%%%%%%%%%%%%%%%%%%%%%%%%%%%%%%%%%%%%%%%%%%%%%%%

\section{Continuity of densities}
\label{sect:cont}

%%%%%%%%%%%%%%%%%%%%%%%%%%%%%%%%%%%%%%%%%%%%%%%

Our leading assumptions will be the following. 

\begin{Hypothesis}
\label{HypG}
$\hphantom{.}$
\begin{enumerate}

\item $G \in W^{2,q}(X , \mu)$ for each  $q>1$, 

\item $\mu(G^{-1}(-\infty, 0 ) >0$, $G^{-1}(0)\neq \emptyset $,

\item there exists $  \delta >0$ such that $1/|D_HG |_H \in L^{q}(G^{-1}(-\delta, \delta) , \mu)$ for each $q>1$. 
\end{enumerate}
 \end{Hypothesis}

From now on we consider  precise Borel versions of $G$ and $|D_HG|_H$ that we still call $G$ and $|D_HG|_H$. As in  Section \ref{Sobolev}, we consider the set $\O:= G^{-1}(-\infty, 0)$, and for $\delta >0$ we define
$\O_{\delta} := G^{-1}(-\delta, \delta)$.

We use a consequence of the coarea formula \cite[Thm. 5.7]{Feyel}: if $G$ satisfies Hypothesis \ref{HypG}-(1), for each Borel $\psi:X\mapsto [0, +\infty)$ we have
\begin{equation}
\label{coarea}
\int_X \psi |D_HG |_Hd\mu = \int_{\R} \int_{G =\xi} \psi \,d\rho_{{\mathcal V}} \;d\xi .
\end{equation}
(It is not excluded that both members are $+\infty$).

\begin{Lemma}
\label{Cor:coarea}
Let $\varphi :\O_{\delta}\mapsto \R$ be a Borel version of an element of  $L^1(\O_{\delta}, \mu)$, for some $\delta >0$. Then 
the function
\begin{equation}
\label{qvarphi}
q_{\varphi}(\xi):= \int_{G=\xi}\frac{\varphi}{|D_HG|_H}\,d\rho_{{\mathcal V}}  , \quad -\delta <\xi<\delta ,
\end{equation}
belongs to  $L^1(-\delta, \delta)$ and it is a density of the measure $\varphi \mu\circ G^{-1}$ restricted to  $(-\delta, \delta)$.   Moreover, 
\begin{equation}
\label{maggL1} \|q_{\varphi}\|_{L^{1}(-\delta, \delta)} \leq  \|\varphi\|_{ L^1(\O_{\delta}, \mu)}.
\end{equation}
\end{Lemma}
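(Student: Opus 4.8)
The plan is to derive the identity and the estimate from the coarea formula \eqref{coarea} by a truncation and approximation argument. First I would assume $\varphi\geq 0$, the general case following by splitting $\varphi=\varphi^+-\varphi^-$. For a Borel set $B\subset(-\delta,\delta)$, the defining property of the pull-back measure gives
$$(\varphi\mu\circ G^{-1})(B)=\int_{G^{-1}(B)}\varphi\,d\mu=\int_{X}\one_{G^{-1}(B)}\,\frac{\varphi}{|D_HG|_H}\,|D_HG|_H\,d\mu.$$
Now I would like to apply \eqref{coarea} with $\psi=\one_{G^{-1}(B)}\varphi/|D_HG|_H$, which is a nonnegative Borel function on $X$ (here one uses that a Borel version of $|D_HG|_H$ has been fixed and that $1/|D_HG|_H$ is finite $\mu$-a.e.\ on $\O_\delta$ by Hypothesis \ref{HypG}-(3)). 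Then
$$\int_{X}\one_{G^{-1}(B)}\,\frac{\varphi}{|D_HG|_H}\,|D_HG|_H\,d\mu=\int_{\R}\int_{G=\xi}\one_{G^{-1}(B)}\,\frac{\varphi}{|D_HG|_H}\,d\rho_{\mathcal V}\,d\xi=\int_{B}q_\varphi(\xi)\,d\xi,$$
since $\one_{G^{-1}(B)}$ restricted to the level set $\{G=\xi\}$ is $\one_{B}(\xi)$. This shows that $q_\varphi$ is a density of $\varphi\mu\circ G^{-1}$ on $(-\delta,\delta)$, provided $q_\varphi$ is well defined (finite a.e.) and measurable.

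For the measurability and integrability of $q_\varphi$, and simultaneously for the estimate \eqref{maggL1}, I would take $B=(-\delta,\delta)$ in the computation above: the left-hand side equals $\int_{\O_\delta}\varphi\,d\mu=\|\varphi\|_{L^1(\O_\delta,\mu)}$ (using $\varphi\geq 0$), while \eqref{coarea} shows it equals $\int_{-\delta}^{\delta}q_\varphi(\xi)\,d\xi$ with $q_\varphi\geq 0$. Hence $q_\varphi\in L^1(-\delta,\delta)$ with $\|q_\varphi\|_{L^1(-\delta,\delta)}=\|\varphi\|_{L^1(\O_\delta,\mu)}$; for general signed $\varphi$ one gets $\|q_\varphi\|_{L^1}\leq\|q_{\varphi^+}\|_{L^1}+\|q_{\varphi^-}\|_{L^1}=\|\varphi\|_{L^1(\O_\delta,\mu)}$, which is \eqref{maggL1}. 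The measurability of $\xi\mapsto q_\varphi(\xi)$ is part of the content of \eqref{coarea} (the coarea formula of Feyel asserts the inner integral is a measurable function of $\xi$ for Borel $\psi\geq 0$); alternatively it follows from the fact that $q_\varphi$, being a density of a finite measure on $(-\delta,\delta)$, is determined a.e.\ and can be taken Borel.

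The main obstacle I anticipate is purely a matter of bookkeeping rather than a genuine difficulty: one must make sure the function $\psi=\one_{G^{-1}(B)}\varphi/|D_HG|_H$ is legitimately the kind of integrand to which \eqref{coarea} applies, i.e.\ Borel-measurable with values in $[0,+\infty]$. This requires having fixed precise Borel versions of $G$ and of $|D_HG|_H$ (which is exactly what was done at the start of Section \ref{sect:cont}), and noting that where $|D_HG|_H=0$ — a $\mu$-null subset of $\O_\delta$, and also a $\rho_{\mathcal V}$-null subset of a.e.\ level set by \eqref{coarea} itself — the quotient may be defined arbitrarily, say as $0$ or $+\infty$, without affecting either side. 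A secondary point is that \eqref{coarea} is stated globally on $X$ while we work on the slab $\O_\delta=G^{-1}(-\delta,\delta)$; this is handled simply by inserting the indicator $\one_{\O_\delta}$ (equivalently, by choosing $B\subset(-\delta,\delta)$), which localizes the $\xi$-integral to $(-\delta,\delta)$ automatically. Once these points are addressed, the proof is just the chain of equalities above together with the decomposition into positive and negative parts.
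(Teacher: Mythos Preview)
Your proposal is correct and follows essentially the same approach as the paper: apply the coarea formula \eqref{coarea} to the test function $\psi=\one_{G^{-1}(B)}\,\varphi/|D_HG|_H$ for Borel $B\subset(-\delta,\delta)$, then take $B=(-\delta,\delta)$ to obtain the $L^1$ bound. The only cosmetic difference is that the paper handles signed $\varphi$ by observing that both the positive and negative parts of $\psi\,|D_HG|_H$ lie in $L^1$, while you split $\varphi=\varphi^+-\varphi^-$ from the outset; these are equivalent.
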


Note that by Proposition \ref{pr:cap} the function $q_{\varphi}$ defined in \eqref{qvarphi} is the same for every precise versions of $G$ and  $|D_HG|_H$. 
\begin{proof} For every Borel set   $B\subset (-\delta, \delta)$ let us consider the function
$$\psi = \frac{ 1}{|D_HG|_H} \varphi  \one_{G^{-1}(B)} .$$
Since  both the positive and the negative parts of  $\psi |D_HG|_H$ are in $L^1(X, \mu)$, we may use  \eqref{coarea}, that yields  $q_{\varphi}\in L^1(B)$ and 
$$\int_{G^{-1}(B)} \varphi \,d\mu = \int_B \int_{G=\xi} \frac{\varphi}{|D_HG|_H}\,d\rho_{{\mathcal V}}\,d\xi.$$

Then, $q_{\varphi}$ is a density of $\varphi \mu\circ G^{-1}$ with respect to the Lebesgue measure  in  $(-\delta, \delta)$. Taking   $B= (-\delta, \delta)$ and applying  \eqref{coarea} to $|\psi|$ we get 
$$ \int_{\R} \bigg| \int_{G=\xi} \psi d\rho_{{\mathcal V}} \bigg| \;d\xi \leq  \int_{\R} \int_{G=\xi} |\psi | d\rho_{{\mathcal V}}  \;d\xi 
= \int_{\O_{\delta}} |\varphi|  d\mu $$
and the estimate follows. 
\end{proof}

For the moment we only know that $q_{\varphi}(\xi)$ is finite for a.e. $\xi \in (-\delta, \delta)$. 
The aim of this section is to prove that  if $\varphi$ is a Borel precise version of an element of $W^{1,p}(X, \mu)$ for some $p>1$, then $ q_{\varphi}(\xi)\in \R$ for every $\xi \in (-\delta, \delta)$,  $ q_{\varphi}$ is continuous in $(-\delta, \delta)$, and moreover $\rho = \rho_{\mathcal V}$ on $G^{-1}(\xi)$, so that 
\begin{equation}
\label{qvarphirho}
 q_{\varphi}(\xi) = \int_{G=\xi}\frac{\varphi}{|D_HG|_H}\,d\rho   , \quad -\delta <\xi<\delta ,
\end{equation}
is independent of the basis $\mathcal V$.  A first step is the Sobolev regularity of $ q_{\varphi}$, which follows from standard arguments, see e.g.  \cite[Ex. 6.9.4]{Boga} or  the appendix of \cite{DPL} in the case that $X$ is a Hilbert space. However, we give the proof for the reader's convenience.

\begin{Proposition}
\label{W11}
Let $p>1$ and let $\varphi$ be a Borel version of an element of $W^{1,p}(X, \mu)$. Then  the signed measure $\varphi \mu\circ G^{-1}$ is absolutely continuous with respect to the Lebesgue measure in the interval $(-\delta, \delta)$,  its  density $q_{\varphi}$ belongs to 
$W^{1,1}(-\delta, \delta)$, and  
\begin{equation}
\label{maggfond} \|q_{\varphi}\|_{W^{1,1}(-\delta, \delta)} \leq C \|\varphi\|_{W^{1,p}(X, \mu)}, 
\end{equation}
with $C$ independent on $\varphi$. 
\end{Proposition}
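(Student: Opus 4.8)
The plan is to establish \eqref{maggfond} for $\varphi$ Lipschitz continuous on $X$ (hence in $W^{1,q}(X,\mu)$ for all $q$) and then pass to the limit using density of Lipschitz functions in $W^{1,p}(X,\mu)$ together with Lemma \ref{Cor:coarea}. For Lipschitz $\varphi$, the first task is to show that $q_\varphi \in W^{1,1}(-\delta,\delta)$ by exhibiting its distributional derivative. The natural candidate comes from differentiating under the integral sign: since the measure $\varphi\mu\circ G^{-1}$ has density $q_\varphi$ by Lemma \ref{Cor:coarea}, and since $\int_{G^{-1}(B)}\varphi\,d\mu = \int_B q_\varphi(\xi)\,d\xi$ for Borel $B\subset(-\delta,\delta)$, I would test against a smooth compactly supported $\zeta\in C_c^\infty(-\delta,\delta)$, obtaining $\int_{-\delta}^\delta q_\varphi(\xi)\zeta'(\xi)\,d\xi = \int_X \varphi(x)\,\zeta'(G(x))\,d\mu$. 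Now $\zeta'(G(x)) = D_k(\zeta(G))/D_kG$ formally, but the clean move is: $\zeta\circ G \in W^{1,q}(X,\mu)$ with $D_H(\zeta\circ G) = \zeta'(G)\,D_HG$ (chain rule, valid since $G\in W^{2,q}$ and $\zeta$ smooth), so for any fixed $k$ with $D_kG$ not identically zero this is awkward; instead I would use a genuinely dimension-free identity.

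\textbf{The key computation.} Choose the direction cleverly: apply the integration by parts formula \eqref{partiX} to the product $\varphi\cdot(\zeta\circ G)\cdot D_kG / |D_HG|_H^2$ summed over $k$, or more transparently, work with the vector field $\Phi := (\zeta\circ G)\,\varphi\, D_HG/|D_HG|_H^2 \in W^{1,p'}$-type space and use $\di$. Concretely, $\langle D_H(\zeta\circ G), D_HG\rangle_H/|D_HG|_H^2 = \zeta'(G)$, so $\zeta'(G) = \di\big((\zeta\circ G) D_HG/|D_HG|_H^2\big) - (\zeta\circ G)\,\di\big(D_HG/|D_HG|_H^2\big)$. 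Then
\begin{equation*}
\int_X \varphi\,\zeta'(G)\,d\mu = \int_X \varphi\,\di\Big(\tfrac{(\zeta\circ G)D_HG}{|D_HG|_H^2}\Big)d\mu - \int_X \varphi\,(\zeta\circ G)\,\di\Big(\tfrac{D_HG}{|D_HG|_H^2}\Big)d\mu,
\end{equation*}
and integrating the first term by parts moves a gradient onto $\varphi$, producing $-\int_X \langle D_H\varphi, D_HG\rangle_H (\zeta\circ G)/|D_HG|_H^2\,d\mu$. Collecting, one gets $\int_{-\delta}^\delta q_\varphi\zeta'\,d\xi = -\int_{-\delta}^\delta q_\psi(\xi)\zeta(\xi)\,d\xi$ for an explicit $\psi\in L^1(\O_\delta,\mu)$ built from $D_H\varphi$, $\varphi$, $D_H^2G$ and negative powers of $|D_HG|_H$; this identifies $q_\varphi' = q_\psi \in L^1(-\delta,\delta)$ via Lemma \ref{Cor:coarea} again, giving $q_\varphi\in W^{1,1}(-\delta,\delta)$. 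The estimate \eqref{maggfond} then follows from \eqref{maggL1} applied to both $\varphi$ and $\psi$, once one bounds $\|\psi\|_{L^1(\O_\delta,\mu)}$ by $\|\varphi\|_{W^{1,p}(X,\mu)}$ times a constant: here one uses Hölder with the integrability of $1/|D_HG|_H$ (and its square, coming from the $\di(D_HG/|D_HG|_H^2)$ term, which also involves $D_H^2G\in L^q$) — Hypothesis \ref{HypG} (1) and (3) guarantee every such factor lies in all $L^q(\O_\delta,\mu)$, so choosing exponents with $\varphi, D_H\varphi \in L^p$ and the remaining factors in the conjugate exponent closes the estimate with $C$ depending only on $p,\delta$ and these norms of $G$.

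\textbf{Main obstacle.} The delicate point is the rigorous justification of the integration by parts / divergence manipulations: the vector field $(\zeta\circ G)\,D_HG/|D_HG|_H^2$ is not a priori in $W^{1,p'}(X,\mu;H)$ because of the negative powers of $|D_HG|_H$, which are only controlled on $\O_\delta$ and only in $L^q$, not in a Sobolev sense. I would handle this by a truncation/approximation argument: replace $1/|D_HG|_H^2$ by $1/(|D_HG|_H^2 + \eta)$ with $\eta\downarrow 0$, and $\zeta\circ G$ localizes everything to $\O_\delta$ where the truncation error is controlled by Hypothesis \ref{HypG}(3), passing to the limit by dominated convergence in each integral. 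Alternatively, and perhaps more in the spirit of the references cited (\cite[Ex.~6.9.4]{Boga}, appendix of \cite{DPL}), one proves the identity first for cylindrical $\varphi$ and finite-dimensional truncations of $G$, where everything is classical, and then passes to the limit using \eqref{coarea} and the Sobolev convergence $G_n\to G$; the technical bookkeeping of which quantities converge in which $L^q(\O_\delta,\mu)$ is the real work, but it is routine given Hypothesis \ref{HypG}. Density of Lipschitz functions in $W^{1,p}(X,\mu)$ then extends \eqref{maggfond} from Lipschitz $\varphi$ to all $\varphi\in W^{1,p}(X,\mu)$, and simultaneously shows $\varphi\mu\circ G^{-1}$ is absolutely continuous on $(-\delta,\delta)$ with density $q_\varphi$ depending only on the $W^{1,p}$-class of $\varphi$.
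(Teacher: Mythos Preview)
Your approach is essentially the same as the paper's: test $q_\varphi$ against $\zeta'\in C_c^\infty(-\delta,\delta)$, rewrite $\zeta'(G)=\langle D_H(\zeta\circ G),D_HG\rangle_H/|D_HG|_H^2$, and integrate by parts against the vector field $\Psi=\varphi\,D_HG/|D_HG|_H^2$ to identify $q_\varphi'=q_{\varphi_1}$ with $\varphi_1=\di\,\Psi$; the $W^{1,1}$ bound then follows from \eqref{maggL1} and H\"older using Hypothesis~\ref{HypG}. Two minor differences: (i) the paper works directly with $\varphi\in W^{1,p}$ rather than reducing first to Lipschitz functions and extending by density, which is unnecessary here; (ii) where you flag as the ``main obstacle'' that $\Psi$ may fail to lie in a Sobolev class because $1/|D_HG|_H$ is only controlled on $\O_\delta$, the paper simply observes that, since the factor $D_H(\zeta\circ G)=\zeta'(G)D_HG$ already localizes the integrand to $\O_\delta$, one can take $\Psi\in W^{1,q}(X,\mu;H)$ for all $q<p$ (implicitly after multiplying by a further cutoff $\tilde\zeta(G)$ equal to $1$ on $\operatorname{supp}\zeta$), so no $\eta$-regularization of $|D_HG|_H$ is needed.
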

\begin{proof}
By Lemma \ref{Cor:coarea}, $\varphi \mu\circ G^{-1}$ has density $q_{\varphi}$ with respect to the Lebesgue measure.
We shall show that  $q_{\varphi}$ is weakly differentiable  in  $(-\delta, \delta)$ with $q_{\varphi}' = q_{\varphi_1}$, where
$$\varphi_1 = \di \,\frac{\varphi  D_HG}{|D_HG(x) |^2_H} $$
and $\di$ is the Gaussian divergence. Namely, 
$$\varphi_1 =  \bigg(   \frac{LG }{|D_HG |_H^2}  - 2\,\frac{\langle  D^2_HG \,D_HG, D_HG\rangle_H}{ |D_HG |_H^4}\bigg) \varphi  + \frac{\langle D_HG,  D_H\varphi\rangle_H}{ |D_HG |_H^2} .$$
It will follow that  $q_{\varphi}\in W^{1,1}(\delta, \delta)$ since  $\varphi_1  \in L^1(G^{-1}(-\delta, \delta))$ and by Lemma \ref{Cor:coarea} the density  $q_{\varphi_1}$ of
$\varphi_1\mu \circ G^{-1}$ belongs to  $L^{-1}(-\delta, \delta)$. 

Let  $\eta\in C^{\infty}_{c}(-\delta, \delta)$. Since 
$$ D_H ( \eta\circ G) (x)   = ( \eta' \circ G)(x)D_HG(x) $$
multiplying by  $D_HG(x) $ we get
$$ ( \eta' \circ G)(x) = \frac{  \langle D_H ( \eta\circ G) (x) , D_HG(x)\rangle_H }{|D_HG(x) |^2_H}$$
and replacing
$$\int_{-\delta}^{\delta}\eta'(\xi) q_{\varphi}(\xi)d\xi = \int_{\O_{\delta}} ( \eta' \circ G)(x) 
\varphi(x) \mu(dx) $$
$$= \int_{\O_{\delta}}   \varphi(x) \frac{  \langle D_H ( \eta\circ G) (x) , D_HG(x)\rangle _H}{|D_HG(x) |^2_H}\mu(dx) .$$
The last integral is in fact an integral over $X$, since the support of the integrand is contained in   $\O_{\delta}$. The integrand may be written as  $ \langle D_H ( \eta\circ G), \Psi\rangle_H$ , with  $\Psi = \varphi  D_HG/ |D_HG(x) |^2_H$. By our assumptions,    $\Psi \in L^q(X, \mu; H)$ for every $q>1$, and it belongs to 
$W^{1,q}(X, \mu; H)$ for every  $q<p$, then we may integrate by parts and we get 
$$ \int_{-\delta}^{\delta}\eta'(\xi) q_{\varphi}(\xi)d\xi = - \int_{\O_{\delta}}   ( \eta \circ 
G)(x) \varphi_1(x) \mu(dx) =
  - \int_{-\delta}^{\delta} \eta(\xi) (\varphi_1\,\mu\circ G^{-1})(d\xi).$$
Since  $\varphi_1\in L^1(\O_{\delta})$, by Lemma  \ref{Cor:coarea}  the signed measure $\varphi_1\mu\circ G^{-1}(d\xi)$ has density
$q_{\varphi_1}$, which is the weak derivative of $q_{\varphi}$. Eventually, estimate  \eqref{maggL1} implies \eqref{maggfond}. 
\end{proof}

\begin{Remark}
{\em Note that the assumption $p>1$ is crucial to get  $\varphi_1  \in L^1(\O_{\delta})$ since it is not reasonable to assume that  $ \frac{LG }{|D_HG |_H^2}  - \frac{\langle  D^2_HG \,D_HG, D_HG\rangle_H}{ |D_HG |_H^4}$ and $1/|D_HG |_H$ are bounded in $\O_{\delta}$. Such conditions are  satisfied only in special cases. For instance, if $\O$ is the unit ball in a Hilbert space, $G(x) = \|x\|^2 -1$, $|D_HG(x)|_H = 2\|Q^{1/2}x\|$ so that $1/|D_HG |_H$ is not bounded in any $\O_{\delta}$. This example and other ones will be  treated in 
 Sect. \ref{Examples}. }
\end{Remark}

Since  $q_{\varphi}\in W^{1,1}(-\delta, \delta)$, then there exists a continuous function in $[-\delta, \delta]$ that coincides with  $q_{\varphi}$ almost everywhere. But in the proof of the integration by parts formula \eqref{parti} (Proposition \ref{p5.1}) we need that $q_{\varphi}$ itself is continuous (here we fill a hole in  \cite{BDPT1,BDPT2,DPL}, where this need was neglected).  
 
We shall use the next lemma, whose proof is shrinked  to half a line in \cite{Feyel} and in \cite{FP}. 
In the following we denote by  $D_H^FG$  the orthogonal projection (along $H$) of $D_HG$ on $F$.

\begin{Lemma}
\label{Le:sofferenza}
Let $F$ be a finite dimensional subspace of  $H$, and let $\rho^F$  be defined by \eqref{rhoF}. Then the measures 
$$\frac{d\rho}{|D_H G|_H} ,   \quad \frac{d\rho^F}{|D_H^FG|_H} $$
coincide on $\{ x: G(x) = \xi, \; |D_H^FG|_H\neq 0\}$, for every $\xi \in (-\delta, \delta)$. 
If $F$ is spanned by a finite number of elements of $\mathcal V$, the measures
$$ \frac{d\rho_{\mathcal V}}{|D_H G|_H} , \quad \frac{d\rho^F}{|D_H^FG|_H} $$
coincide on $\{ x: G(x) = \xi, \; |D_H^FG|_H\neq 0\}$, for every $\xi \in (-\delta, \delta)$. 
\end{Lemma}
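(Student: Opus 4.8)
The plan is to prove the two statements simultaneously, since the second is just the special case of the first where the basis of $F$ is taken from $\mathcal V$ and $\rho$ is replaced by $\rho_{\mathcal V}$; the argument is identical once one knows the relevant monotonicity/coarea facts for the restricted supremum. First I would fix $\xi\in(-\delta,\delta)$ and a finite dimensional $F\subset H$, pick an orthonormal (along $H$) basis $f_1,\dots,f_m$ of $F$, and compute in the splitting $X = F\oplus\widetilde F$ with coordinates $(y,z)$, $y = \pi^F(x)$, $z = (I-\pi^F)(x)$, and $\mu = \mu^F\otimes\mu_F$. The key point is a fibered coarea formula: for $\mu_F$-a.e. $z\in\widetilde F$ the function $y\mapsto G(z+y)$ is (a good version of) a $W^{1,q}$ function on the finite dimensional space $F\cong\R^m$, its $H$-gradient along $F$ is exactly $D_H^FG$, and the finite dimensional coarea formula applied on each fiber gives, for every nonnegative Borel $\psi$,
\begin{equation*}
\int_F \psi(z+y)\,|D_H^FG(z+y)|_H\,\theta^F(dy) = \int_\R \int_{\{y\in F:\,G(z+y)=\xi\}} \psi(z+y)\,dH^F_{m-1}(y)\,d\xi,
\end{equation*}
where $H^F_{m-1}$ is the spherical Hausdorff measure on the level set inside $F$ (with the $H$-metric). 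Integrating this against $\mu_F(dz)$ and recalling the definition \eqref{rhoF} of $\rho^F$ via the slices $A_z$, one identifies, for a.e.\ $\xi$, the measure $\rho^F$ restricted to $\{G=\xi\}$ with the ``disintegration'' whose value on the fiber over $z$ is the Hausdorff measure $H^F_{m-1}$ on $\{y: G(z+y)=\xi\}$. This already yields the identity of $d\rho^F/|D_H^FG|_H$ with $\theta^F\otimes\mu_F$ restricted suitably on the set $\{|D_H^FG|_H\neq 0\}$, i.e.\ the ``internal'' consistency.

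Next I would bring in the full (or $\mathcal V$-) surface measure. Here I would invoke the coarea identity \eqref{coarea} for $\rho_{\mathcal V}$ and its analogue for $\rho$: for each Borel $\psi\geq 0$,
\begin{equation*}
\int_X \psi\,|D_HG|_H\,d\mu = \int_\R\int_{G=\xi}\psi\,d\rho_{\mathcal V}\,d\xi,
\end{equation*}
and combine it with Lemma~\ref{Cor:coarea}, which tells us that the density of $\psi\mu\circ G^{-1}$ is $\xi\mapsto\int_{G=\xi}\psi/|D_HG|_H\,d\rho_{\mathcal V}$. Testing against $\psi = \varphi\,\one_{\{|D_H^FG|_H\neq 0\}}$ with $\varphi$ ranging over a measure-determining class, the two descriptions of the same pull-back density on $(-\delta,\delta)$ — one obtained from $\rho$ (resp.\ $\rho_{\mathcal V}$) divided by $|D_HG|_H$, the other from $\rho^F$ divided by $|D_H^FG|_H$, using that the latter already equals the fibered Hausdorff measure — must agree for a.e.\ $\xi$. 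To upgrade from ``a.e.\ $\xi$'' to ``every $\xi\in(-\delta,\delta)$'' I would use the monotonicity of $F\mapsto\rho^F$ from \cite[Prop.~3.2]{Feyel} together with the fact, established via Proposition~\ref{W11} applied to suitable $\varphi$, that $\xi\mapsto\int_{G=\xi}\varphi/|D_HG|_H\,d\rho^F$ and $\xi\mapsto q_\varphi(\xi)$ are both represented by $W^{1,1}$ (hence continuous, after choosing the right versions) functions of $\xi$; two continuous functions agreeing a.e.\ agree everywhere. Actually, the cleanest route is: the identity of measures on a single level set $\{G=\xi\}$ is equivalent to the identity of their $\varphi/|D_HG|_H$-integrals for $\varphi$ in a countable determining family, and each such integral is a continuous function of $\xi$, so a null set of bad $\xi$'s cannot occur.

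The main obstacle I expect is the fibered coarea step: justifying that for $\mu_F$-a.e.\ $z$ the slice $y\mapsto G(z+y)$ is a legitimate Sobolev function on $F$ with gradient $D_H^FG(z+\cdot)$, and that the finite dimensional coarea formula produces exactly the spherical Hausdorff measure with respect to the $H$-metric on $F$ (not the $X$-metric) — this is where the remark after \eqref{thetaF} about $dH_{m-1}= dS\circ Q^{-1/2}$ is essential, and where one must be careful that $\rho^F$ as defined in \eqref{rhoF} is built from $\theta^F$, which already encodes the $H$-metric. A secondary delicate point is the restriction to the set $\{|D_H^FG|_H\neq 0\}$: on its complement the level set of $G|_F$ could behave badly, but there the finite dimensional coarea formula contributes nothing to the Hausdorff integral on generic fibers, so both sides of the claimed identity simply vanish there, and the statement is vacuous. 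Everything else — the splitting of $\mu$, the density identifications, the passage from a.e.\ to every $\xi$ — is routine once these two points are in place, and I would only sketch them.
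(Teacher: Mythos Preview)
Your approach via the coarea formula and comparison of densities would indeed give the identity of the two measures on $\{G=\xi\}$ for \emph{almost every} $\xi$, but your upgrade to ``every $\xi$'' is circular. You invoke continuity of $\xi\mapsto\int_{G=\xi}\varphi/|D_HG|_H\,d\rho_{\mathcal V}$ (and of the $\rho^F$ analogue), appealing to Proposition~\ref{W11}. But Proposition~\ref{W11} only says that $q_\varphi$ lies in $W^{1,1}(-\delta,\delta)$, i.e.\ it agrees \emph{a.e.} with a continuous function; it does \emph{not} say that the pointwise-defined surface integral is itself continuous. That stronger statement is exactly the content of Theorem~\ref{cont}, which is proved \emph{after} Lemma~\ref{Le:sofferenza} and \emph{uses} it (through Corollary~\ref{Cor:sofferenza}). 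So you cannot use it here. Knowing that two functions have continuous representatives that agree a.e.\ tells you the representatives agree everywhere, but says nothing about the original pointwise values at the exceptional $\xi$'s---and it is precisely the pointwise value at each fixed $\xi$ that the lemma asserts.

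The paper's proof avoids this trap by working at a single fixed $\xi$ throughout and never appealing to regularity in $\xi$. The key input you are missing is the finite-dimensional consistency result (\cite[Cor.~6.3]{Feyel}): for any finite-dimensional $L\supset F$, the measures $d\rho^L/|D_H^LG|_H$ and $d\rho^F/|D_H^FG|_H$ already coincide on $\{G=\xi,\;D_H^FG\neq 0\}$, for \emph{every} $\xi$. This makes the family $\{d\rho^L/|D_H^LG|_H\}_{L\supset F}$ constant in $L$. One then sandwiches: from $|D_HG|_H\geq|D_H^LG|_H$ and $\rho^L\leq\rho$ one gets $\int_A d\rho^L/|D_HG|_H\leq\int_A d\rho^L/|D_H^LG|_H\leq\int_A d\rho/|D_H^LG|_H$; taking the supremum over $L$ on the left (using $\rho=\sup_L\rho^L$) and the infimum over $L$ on the right (using monotone convergence of $|D_H^LG|_H\uparrow|D_HG|_H$, after a truncation to ensure integrability) closes the argument. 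Your mention of monotonicity of $F\mapsto\rho^F$ is in the right spirit, but it is this sandwich at fixed $\xi$---not any continuity in $\xi$---that does the work.
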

\begin{proof}
The statement holds if $X$ is finite dimensional, by  \cite[Cor. 6.3]{Feyel}. Consequently, in the infinite dimensional case if $L\supset F$ is a finite dimensional subspace of $X$, for each  $\xi \in (-\delta, \delta)$ the measures
$$\frac{d\rho^L}{|D_H^LG|_H} , \quad \frac{d\rho^F}{|D_H^FG|_H} $$
coincide on  $\{ x: G(x) = \xi, \; |D_H^FG|_H\neq 0\}$. 

Let us prove that $ d\rho/|D_H G|_H= d\rho^F/|D_H^FG|_H  $ on $\{ x: G(x) = \xi, \; |D_H^FG|_H\neq 0\}$. 
Fix $\xi$ such that   $\int_{G=\xi }\frac{d\rho}{|D_H G| }<\infty$. Given any Borel set $A \subset \{x\in X:  \;D_H^FG(x) \neq 0\}$, we have
\begin{equation}
\label{disugL}
\int_{A \cap \{G = \xi\}} \frac{d\rho ^L}{|D_H G|_H}  \leq \int_{A \cap \{G = \xi\}} \frac{d\rho^L}{|D_H^LG|_H} \leq \int_{A \cap \{G  = \xi\}} \frac{d\rho }{|D_H^LG|_H} .\end{equation}
The first equality holds since $|D_H G|_H\geq |D_H^LG|_H$, the second one holds since for each nonnegative Borel function  $\int_A \varphi \,d\rho^L  \leq \int_A \varphi \,d\rho$ by the definition of  $\rho$ as a supremum. 

Now we want to take the sup with respect to $L$.  
Since  $\rho$ is defined as the supremum of  $\rho ^L$, for every nonnegative Borel function $\psi$ we have  $\int_X \psi\,d\rho = \sup_L \int_X \psi\,d\rho^L$. Taking  $\psi = \one_A/|D_H G|_H$  we get
$$ \int_{A \cap \{G = \xi\}} \frac{d\rho  }{|D_H G|_H} =  \sup_{L} \int_{A \cap \{G = \xi\}} \frac{d\rho ^L}{|D_H G|_H}  \leq  \sup_{L} \int_{A \cap \{G = \xi\}} \frac{d\rho ^L}{|D_H^LG|_H}  $$
and recalling that the measures $d\rho ^L/|D_H^LG|_H$ are independent of $L$, 
$$ \int_{A \cap \{G = \xi\}} \frac{d\rho  }{|D_H G|_H} \leq  \int_{A \cap \{G = \xi\}} \frac{d\rho ^L}{|D_H^LG|_H}, \quad \forall L\supset F .$$
In particular, if  $I:= \int_{A \cap \{G = \xi\}} \frac{d\rho  }{|D_H G|} =+\infty$ then  $I_L:=\int_{A \cap \{G = \xi\}} \frac{d\rho ^L}{|D_H^LG|}= +\infty$ for every  $L\supset F$, and in this case the equality $d\rho/|D_H G|_H=d\rho^F/|D_H^FG|_H$ follows. 

If  $I <\infty$ we have to prove also the other inequality.  
Note that  $I <\infty$ does not immediately imply that 
for some   $L\supset F$ we have
$I_L <\infty$. Let us consider the sets $A_n:=\{ x\in A:\; |D_H G(x)|_H/|D_H^LG(x)|_H <n\}$ (recall that $D^F_HG \neq 0$ in $A$, so that $D^L_HG \neq 0$ in $A$). Then  $A_n\subset A_{n+1}$,  and the restriction of the function $1/|D_H^LG(x)|_H$ to  $A_n \cap \{G = \xi\}$ belongs to $L^1(A_n \cap \{G = \xi\},\rho)$, since it is bounded by $n/ |D_H G(x)|_H\one_{G=\xi}$ which belongs to $L^1(A \cap \{G = \xi\}, \rho)$ by assumption. Since  $|D_H^LG(x)|_H$ converges increasingly to  $|D_HG(x)|_H$ as  $L$ increases, by monotone convergence we get
$$\int_{A_n \cap \{G = \xi\}} \frac{d\rho }{|D_H G|_H} = \inf_{L} \int_{A _n\cap \{G = \xi\}} \frac{d\rho }{|D_H^LG|_H}$$
and applying  the second inequality of \eqref{disugL} to $A_n$ we get
$$\inf_L \int_{A _n\cap \{G = \xi\}} \frac{d\rho^L}{|D_H^LG|_H} \leq  \inf_{L} \int_{A _n\cap \{G = \xi\}} \frac{d\rho }{|D_H^LG|} = \int_{A_n \cap \{G = \xi\}} \frac{d\rho }{|D_H G|_H}. $$
Since  $d\rho^L/|D_H^LG|_H$ is constant, 
$$\int_{A _n\cap \{G = \xi\}} \frac{d\rho^L}{|D_H^LG|_H} \leq \int_{A_n \cap \{G = \xi\}} \frac{d\rho }{|D_H G|_H}\quad \forall L\supset F.$$
Letting $n\to \infty$, by monotone convergence in both sides we get 
$$\int_{A  \cap \{G = \xi\}} \frac{d\rho^L}{|D_H^LG|_H} \leq \int_{A  \cap \{G = \xi\}} \frac{d\rho }{|D_H G|_H}\quad \forall L\supset F.$$
Therefore, $d\rho/|D_H G|_H = d\rho^F/|D_H^FG|_H$. The equality $d\rho_{\mathcal V}/|D_H G|_H = d\rho^F/|D_H^FG|_H$ is proved in the same way, just considering only subspaces $L$ spanned by elements of the basis $\mathcal V$.  
\end{proof}

Lemma \ref{Le:sofferenza} has some useful  consequences.

\begin{Corollary}
The measures $\rho$ and $\rho_{\mathcal V}$ coincide on $ G^{-1}(\xi)$, for every $\xi\in (-\delta, \delta)$. 
\end{Corollary}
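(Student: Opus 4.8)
Since $\rho_{\mathcal V}\le\rho$ by construction, it is enough to fix $\xi\in(-\delta,\delta)$ and a Borel set $A\subset G^{-1}(\xi)$ and to prove $\rho(A)\le\rho_{\mathcal V}(A)$. The key observation is that the finite dimensional subspaces $F_m:=\mathrm{span}\{v_1,\dots,v_m\}$ are simultaneously arbitrary finite dimensional subspaces of $H$ \emph{and} subspaces spanned by finitely many elements of $\mathcal V$, so that \emph{both} statements of Lemma \ref{Le:sofferenza} apply to each $F_m$, and this is exactly what links $\rho$ and $\rho_{\mathcal V}$.

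So I would first fix $m$ and apply Lemma \ref{Le:sofferenza} to $F_m$: on the set $S_m:=\{x:\ G(x)=\xi,\ |D_H^{F_m}G|_H\neq 0\}$ we have at once $d\rho/|D_HG|_H=d\rho^{F_m}/|D_H^{F_m}G|_H$ and $d\rho_{\mathcal V}/|D_HG|_H=d\rho^{F_m}/|D_H^{F_m}G|_H$, hence $d\rho/|D_HG|_H=d\rho_{\mathcal V}/|D_HG|_H$ on $S_m$. Since $|D_H^{F_m}G|_H^2=\sum_{k=1}^{m}(D_kG)^2$ increases pointwise to $|D_HG|_H^2$, the sets $S_m$ increase to $S:=\{x:\ G(x)=\xi,\ |D_HG|_H\neq 0\}$, so monotone convergence upgrades the identity to $d\rho/|D_HG|_H=d\rho_{\mathcal V}/|D_HG|_H$ on the whole of $S$. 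Multiplying these two measures by the Borel function $|D_HG|_H$, which is finite everywhere and strictly positive on $S$, gives $\rho=\rho_{\mathcal V}$ on every Borel subset of $S$; in particular $\rho(A\cap S)=\rho_{\mathcal V}(A\cap S)$.

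It then remains to see that the leftover $A\setminus S=A\cap\{|D_HG|_H=0\}$ is $\rho$-negligible (hence $\rho_{\mathcal V}$-negligible as well), for then $\rho(A)=\rho(A\cap S)=\rho_{\mathcal V}(A\cap S)\le\rho_{\mathcal V}(A)$ and we are done. When $X$ is finite dimensional the set $\{|D_HG|_H=0\}\cap\O_\delta$ is in fact empty: Hypothesis \ref{HypG}(1) forces $G$ to have a version of class $C^{1,\alpha}$ for every $\alpha<1$, and a zero of $D_HG$ inside $\O_\delta$ would produce a local bound $|D_HG|_H\le C_\alpha|x-x_0|^\alpha$ that, choosing $\alpha$ close to $1$, is incompatible with the integrability of $1/|D_HG|_H$ imposed by Hypothesis \ref{HypG}(3). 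In the general case one goes back to the definition \eqref{rhoF}: on $\mu_F$-almost every slice $x+F$ the restriction of $G$ is of class $C^{1,\alpha}$ with $F$-gradient equal to $D_H^FG(x+\cdot)$, so the slice of $G^{-1}(\xi)\cap\{|D_H^FG|_H=0\}$ is contained in the critical level set $\{G(x+\cdot)=\xi,\ \nabla(G(x+\cdot))=0\}$; if one knows this critical level set to have zero $(m-1)$-dimensional Hausdorff measure, then $\rho^F(G^{-1}(\xi)\cap\{|D_H^FG|_H=0\})=0$ for every $F$, and $\rho(G^{-1}(\xi)\cap\{|D_HG|_H=0\})=0$ follows by taking the supremum over $F$.

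I expect the genuine obstacle to be exactly this last point. A coarea identity on the slices (or Sard-type arguments) only yields that those critical level sets are $H_{m-1}$-negligible for Lebesgue-almost every $\xi$, whereas the statement asks for \emph{every} $\xi\in(-\delta,\delta)$: getting the exceptional set $G^{-1}(\xi)\cap\{|D_HG|_H=0\}$ to be $\rho$-null at a single arbitrary level either requires a capacitary estimate fed into Proposition \ref{pr:cap}, or must be bootstrapped from the $W^{1,1}$-regularity and continuity of the densities $q_\varphi$ proved later in this section. The first two paragraphs, by contrast, are completely routine once Lemma \ref{Le:sofferenza} is in hand.
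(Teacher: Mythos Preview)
Your first two paragraphs follow the paper's proof verbatim: apply both halves of Lemma \ref{Le:sofferenza} to the subspaces $F_m=\mathrm{span}\{v_1,\dots,v_m\}$, conclude $d\rho/|D_HG|_H=d\rho_{\mathcal V}/|D_HG|_H$ on each $S_m$, take the union, and multiply back by $|D_HG|_H$ to obtain $\rho=\rho_{\mathcal V}$ on $\{G=\xi,\ |D_HG|_H\neq 0\}$.

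The divergence is in your third paragraph. Your slicing/Sard attempt is, as you yourself diagnose, only good for almost every $\xi$; it does not give the statement for a fixed $\xi$. The paper does \emph{not} try to rescue this line. Instead it uses precisely the capacitary route you mention in your final sentence but do not carry out: one checks that $1/|D_HG|_H\in W^{1,p}(X,\mu)$ for every $p$ (this uses $G\in\bigcap_q W^{2,q}$ together with Hypothesis \ref{HypG}(3)), applies the capacity estimate \eqref{cap} to $f=1/|D_HG|_H$, and observes that
\[
\{x:\ |D_HG(x)|_H=0\}\subset\{x:\ 1/|D_HG(x)|_H>r\}\quad\text{for every }r>0,
\]
so $C_{1,p}(\{|D_HG|_H=0\})=0$. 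Proposition \ref{pr:cap} then gives $\rho(\{|D_HG|_H=0\})=\rho_{\mathcal V}(\{|D_HG|_H=0\})=0$, which disposes of $A\setminus S$ at every level $\xi$ simultaneously. So the ``genuine obstacle'' you flag is removed in two lines by the capacity argument you anticipated; the only gap in your proposal is that you stopped short of executing it.
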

\begin{proof}
Lemma \ref{Le:sofferenza} implies that for every  $\xi \in (-\delta, \delta)$ the measures 
$\rho/|D_HG|_H$,  $\rho_{\mathcal V}/|D_HG|_H$ coincide with  $\rho^L/|D_H^LG|_H$ on $\{  x\in X:\  \; D_H^L G(x)\neq 0, \; G(x)=\xi \}$, where $L$ is any finite dimensional subspace  spanned by elements of the basis $\mathcal V$. 
Then, $\rho/|D_HG|_H$,  $\rho_{\mathcal V}/|D_HG|_H$ coincide on the union of such sets, which is just $\{ x\in X:\  \; D_H G(x)\neq 0, \;G(x) = \xi \}$. 

We remark that  Hypothesis \ref{HypG} implies that the set $\{ x\in X:\  \; D_H G(x) = 0\}$ has null $C_{1,p}$-capacity, for every $p$. Indeed, it is sufficient to apply estimate \eqref{cap} to the function  $f= 1/|D_HG|_H$ (that belongs to $W^{1,p}(X, \mu)$ for every $p$) and to observe that $\{ x\in X:\  \; D_H G(x) =  0\}$ $\subset$ $\{ x\in X:\  \; 1/|D_H G(x)|>r\}$ for every $r>0$. 
By Proposition \ref{pr:cap},  $\rho(A) =  \rho_{\mathcal V}(A) =0$ for every set $A$ with null $C_{1,p}$-capacity. Then $\rho/|D_HG|_H$,  $\rho_{\mathcal V}/|D_HG|_H$ coincide on $G^{-1}(\xi)$ for every $\xi\in (-\delta, \delta)$, and the  conclusion follows. 
\end{proof}

\begin{Corollary}
\label{Cor:sofferenza}
For each Borel precise  $\varphi\in  W^{1,p}(X, \mu)$ such that the support of $\varphi_{| \O_{\delta}}$ is contained  in $\{x\in \O_{\delta}:\;D_H^FG(x) \neq 0\}$ for some $F$ we have
$$\int_{G=\xi} \varphi  \frac{d\rho  }{|D_H G|_H} = \int_{G=\xi} \varphi  \frac{d\rho _{\mathcal V} }{|D_H G|_H} = \int_{G=\xi} \varphi  \frac{d\rho^F}{|D_H^FG|_H}, \quad -\delta < \xi < \delta $$
and consequently 
\begin{equation}
\label{sofferenza}
q_{\varphi}(\xi) =   \int_{\widetilde{F}} q_{\varphi_x}(\xi)  \mu_F(dx), \quad -\delta <\xi <\delta . 
\end{equation}
\end{Corollary}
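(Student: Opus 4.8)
The plan is to deduce Corollary \ref{Cor:sofferenza} directly from Lemma \ref{Le:sofferenza}, the definition \eqref{rhoF} of $\rho^F$, and Lemma \ref{Cor:coarea}. First I would note that, by hypothesis, the support of $\varphi_{|\O_\delta}$ sits inside $\{x\in\O_\delta:\ D_H^FG(x)\neq 0\}$, and since enlarging $F$ only enlarges this set, we may freely pass to subspaces $L\supset F$ with $L$ spanned by elements of $\mathcal V$. Lemma \ref{Le:sofferenza} asserts that the measures $d\rho/|D_HG|_H$, $d\rho_{\mathcal V}/|D_HG|_H$ and $d\rho^F/|D_H^FG|_H$ all coincide on $\{G=\xi,\ |D_H^FG|_H\neq 0\}$ for every $\xi\in(-\delta,\delta)$; integrating the nonnegative Borel function $\varphi^+$ and then $\varphi^-$ (the support condition guarantees everything is concentrated where the three measures agree, and finiteness is handled exactly as in the proof of Lemma \ref{Le:sofferenza} by the layer-cake truncation $A_n$, or more cheaply here because $q_\varphi\in W^{1,1}\subset L^1$ gives finiteness for a.e. $\xi$ and continuity for every $\xi$ once Proposition \ref{W11} is invoked) yields the first displayed chain of equalities
\[
\int_{G=\xi}\varphi\,\frac{d\rho}{|D_HG|_H}=\int_{G=\xi}\varphi\,\frac{d\rho_{\mathcal V}}{|D_HG|_H}=\int_{G=\xi}\varphi\,\frac{d\rho^F}{|D_H^FG|_H}.
\]

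For \eqref{sofferenza} I would unwind the definition \eqref{rhoF}: for a Suslin set $A$, $\rho^F(A)=\int_{\widetilde F}\theta^F(A_x)\,d\mu_F(x)$, and correspondingly for a nonnegative Borel function $f$ on $X$ one has, by the product structure $\mu\leftrightarrow(\mu^F,\mu_F)$ on $F\oplus\widetilde F$ and Fubini, $\int_X f\,d\rho^F=\int_{\widetilde F}\big(\int_F f(x+y)\,\theta^F(dy)\big)d\mu_F(x)$. Apply this with $f=\varphi\,\one_{\{G=\xi\}}/|D_H^FG|_H$. The inner integral over the finite-dimensional slice $x+F$ is precisely the finite-dimensional coarea expression which, by the finite-dimensional case of Lemma \ref{Cor:coarea} applied on $F$ to the function $\varphi_x:=\varphi(x+\cdot)$ and the function $G_x:=G(x+\cdot)$ (whose $F$-gradient is $D_H^FG(x+\cdot)$), equals $q_{\varphi_x}(\xi)$. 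Hence
\[
\int_{G=\xi}\varphi\,\frac{d\rho^F}{|D_H^FG|_H}=\int_{\widetilde F}q_{\varphi_x}(\xi)\,\mu_F(dx),
\]
and combining with the first chain gives $q_\varphi(\xi)=\int_{\widetilde F}q_{\varphi_x}(\xi)\,\mu_F(dx)$.

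The main technical point to handle carefully is the justification of splitting $\varphi$ into positive and negative parts and interchanging the supremum defining $\rho$ with integration against $1/|D_HG|_H$: this is exactly the subtlety that occupies the proof of Lemma \ref{Le:sofferenza}, so here I would simply cite it rather than redo the $A_n$-truncation argument, remarking only that the support hypothesis on $\varphi$ confines all integrals to the region $\{D_H^FG\neq 0\}$ where the three measures literally agree, so that the identity of the integrals is immediate from the identity of the measures once finiteness is known — and finiteness for every $\xi$ is guaranteed by Proposition \ref{W11}, which tells us $q_\varphi$ (hence $q_{|\varphi|}$, applied to $|\varphi|$) is a genuine $W^{1,1}(-\delta,\delta)$ function, in particular everywhere finite after passing to its continuous representative. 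I expect the only real obstacle is bookkeeping: making sure that the measurability of $x\mapsto q_{\varphi_x}(\xi)$ and the Fubini step for $\rho^F$ are legitimate, both of which are covered by \cite[Prop. 3.2]{Feyel} and \cite[Lemma 3.1]{Diego} already quoted for the well-posedness of \eqref{rhoF}.
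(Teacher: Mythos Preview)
Your proposal is correct and follows essentially the same route as the paper, which disposes of the corollary in one line: ``just integrating with respect to the measures $d\rho_{\mathcal V}/|D_H G|_H = d\rho^F/|D_H^FG|_H$.'' Your expansion of the second identity \eqref{sofferenza} via the Fubini structure of $\rho^F$ is exactly what is implicit there.

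One remark: you spend effort worrying about finiteness and the $A_n$-truncation, but this is unnecessary. Lemma \ref{Le:sofferenza} gives an equality of \emph{measures} on the Borel sets of $\{G=\xi,\ D_H^FG\neq 0\}$; integrating a common Borel function (say $\varphi^+$, then $\varphi^-$) against two equal measures yields equal integrals, finite or not. The support hypothesis confines the integrand to that set, so the chain of equalities is immediate, and you need neither Proposition \ref{W11} nor any truncation here. Similarly, the detour through ``$L\supset F$ spanned by elements of $\mathcal V$'' is superfluous: the equality $\rho=\rho_{\mathcal V}$ on $G^{-1}(\xi)$ is already provided by the preceding (unlabelled) corollary, so the middle term in your chain needs no separate argument.
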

\begin{proof} 
The  statement is obtained just integrating with respect to the measures  $d\rho_{\mathcal V}/|D_H G|_H $ $=$ $ d\rho^F/|D_H^FG|_H$. \end{proof}
 
With the aid of Corollary \ref{Cor:sofferenza} we may eventually prove that $q_{\varphi}$ is continuous.

\begin{Theorem}
\label{cont}
For every Borel precise  $\varphi \in W^{1,p}(X, \mu)$ with $p>1$, we have  
$$q_{\varphi}(\xi)= \int_{\{G=\xi\}} \frac{\varphi}{|D_HG|_H}\,d\rho \in \R, \quad \xi\in (-\delta, \delta),$$
and $q_{\varphi}$ is continuous  in $(-\delta, \delta)$. 
\end{Theorem}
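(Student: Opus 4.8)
The plan is to exploit the reduction formula \eqref{sofferenza} of Corollary~\ref{Cor:sofferenza}, which rewrites $q_\varphi$ as an integral over the complementary subspace $\widetilde F$ of the one-dimensional densities $q_{\varphi_x}$, and to combine it with the finite-dimensional case, where continuity (indeed $C^1$-type regularity away from critical points) of such densities is classical. Concretely, I would first reduce to the case of $\varphi$ with support (in $\O_\delta$) contained in $\{D_H^FG\neq 0\}$ for a fixed finite-dimensional $F$: by the capacity argument already used above, the set $\{D_HG=0\}$ is $C_{1,p}$-negligible and hence $\rho$-null, so one can exhaust $\O_\delta$ (up to a $\rho$-null set on each level) by such pieces, using a partition of unity in the $F$-variables or a monotone approximation $\one_{\{|D_H^FG|_H>1/n\}}$ as $F$ grows along $\mathcal V$. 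One has to be slightly careful that multiplying a Sobolev function by such a cutoff keeps it in $W^{1,p}$, which follows from Lemma~\ref{product} together with the nondegeneracy Hypothesis~\ref{HypG}-(3) (the cutoff can be taken to depend smoothly on finitely many coordinates and on $|D_H^FG|_H$, which is Sobolev).

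Next, for $\varphi$ localized in $\{D_H^FG\neq 0\}$, formula \eqref{sofferenza} gives $q_\varphi(\xi)=\int_{\widetilde F} q_{\varphi_x}(\xi)\,\mu_F(dx)$, where $\varphi_x(y):=\varphi(x+y)$ is, for $\mu_F$-a.e.\ $x$, a Borel precise version of an element of $W^{1,p}(F,\mu^F)$ and $G_x:=G(x+\cdot)$ satisfies the finite-dimensional analogue of Hypothesis~\ref{HypG} on the relevant sublevel set; here $F$ is finite-dimensional, so $G_x\in C^{1+\alpha}_{loc}$ and $|D_H^FG_x|_H$ does not vanish on the support of $\varphi_x$. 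In this finite-dimensional setting $q_{\varphi_x}$ is the density of $\varphi_x\mu^F\circ G_x^{-1}$; by the coarea formula on $F$ and the implicit function theorem the level sets $\{G_x=\xi\}$ form a $C^1$ family of hypersurfaces near each non-critical value, and the surface integral $\int_{\{G_x=\xi\}}\varphi_x/|D^F_HG_x|_H\,d\theta^F$ depends continuously — in fact is as regular as $\varphi_x$ allows — on $\xi$ in $(-\delta,\delta)$; continuity for merely $W^{1,p}$, $p>1$, integrands follows from Proposition~\ref{W11} applied in finite dimensions together with the fact that a $W^{1,1}(-\delta,\delta)$ function which is additionally shown (via the IFT change of variables) to equal a genuinely continuous expression is continuous. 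I would record the pointwise bound $|q_{\varphi_x}(\xi)|\le C_x\,\|\varphi_x\|_{W^{1,p}(F,\mu^F)}$ on compact subintervals with $C_x$ controlled in a $\mu_F$-integrable way.

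Then I would pass to the limit in $x$: by \eqref{sofferenza}, $q_\varphi$ is an average over $\widetilde F$ of the continuous functions $\xi\mapsto q_{\varphi_x}(\xi)$, so continuity of $q_\varphi$ follows from dominated convergence provided one has a $\mu_F$-integrable domination $\sup_{\xi\in K}|q_{\varphi_x}(\xi)|\le g(x)$ on each compact $K\subset(-\delta,\delta)$. Such a domination is produced exactly as in Lemma~\ref{Cor:coarea} / Proposition~\ref{W11}: integrating the finite-dimensional estimate and the divergence expression for $\varphi_1$ over $\widetilde F$ gives $\int_{\widetilde F}\sup_K|q_{\varphi_x}|\,\mu_F(dx)\le C\big(\|\varphi\|_{L^1(\O_\delta,\mu)}+\|\varphi_1\|_{L^1(\O_\delta,\mu)}\big)<\infty$ because $\varphi_1\in L^1(\O_\delta)$ by Hypothesis~\ref{HypG} and $p>1$. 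Finally, that $q_\varphi(\xi)=\int_{\{G=\xi\}}\varphi/|D_HG|_H\,d\rho$ for \emph{every} $\xi$ (not just a.e.) and that the right-hand side is finite and basis-independent is exactly Corollary~\ref{Cor:sofferenza} combined with the previous Corollary ($\rho=\rho_{\mathcal V}$ on level sets), once the localization is removed by monotone convergence. The main obstacle I anticipate is the removal of the localization uniformly in $\xi$: one needs the exhaustion of $\O_\delta$ by the sets $\{|D_H^FG|_H>1/n\}$ to be compatible with the estimates on compact $\xi$-intervals, i.e.\ to control the tail $\int_{\{G=\xi\}\cap\{|D_HG|_H\le 1/n\}}|\varphi|/|D_HG|_H\,d\rho$ uniformly for $\xi\in K$; this is handled again by integrating the coarea bound over $\widetilde F$ and invoking $1/|D_HG|_H\in L^q(\O_\delta,\mu)$ for all $q>1$ together with $\varphi\in L^p$, $p>1$, via H\"older, so the tail tends to $0$ uniformly on $K$ and the localized continuous approximations converge uniformly on $K$ to $q_\varphi$.
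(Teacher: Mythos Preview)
Your strategy is essentially the paper's own: localize $\varphi$ to $\{|D_H^FG|_H>\varepsilon\}$, invoke Corollary~\ref{Cor:sofferenza} to write $q_\varphi$ as an integral over $\widetilde F$ of finite-dimensional densities $q_{\varphi_x}$, establish continuity of each $q_{\varphi_x}$ by finite-dimensional arguments, pass inside the integral by dominated convergence, and finally remove the localization. The domination you propose ($\sup_K|q_{\varphi_x}|$ controlled by the section norms $\|\varphi_x\|_{W^{1,p}(F,\mu^F)}$, which are $\mu_F$-integrable by Fubini) is exactly what the paper uses.

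There is one genuine gap in your write-up, at the very last step. To remove the localization you claim the tail
\[
\int_{\{G=\xi\}\cap\{|D_H^{F_n}G|_H\le 1/n\}}\frac{|\varphi|}{|D_HG|_H}\,d\rho
\]
tends to $0$ \emph{uniformly} in $\xi\in K$, and you justify this by ``integrating the coarea bound over $\widetilde F$'' together with H\"older. But coarea together with H\"older only gives smallness of $\int_K(\cdots)\,d\xi$, i.e.\ $L^1_\xi$ control, not $L^\infty_\xi$ control; so as written this does not yield uniform convergence of the localized approximations to $q_\varphi$. The paper closes this gap differently, and the fix is already implicit in the tools you cite: the approximants $\varphi_n$ (built as in Lemma~\ref{Le:approssimazione}) converge to $\varphi$ in $W^{1,s}(\O_\delta,\mu)$ for every $s<p$, so estimate~\eqref{maggfond} gives $\|q_{\varphi_n}-q_{\varphi_m}\|_{W^{1,1}(-\delta,\delta)}\to 0$, hence $(q_{\varphi_n})$ is Cauchy in $L^\infty$. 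Combined with \emph{pointwise} convergence $q_{\varphi_n}(\xi)\to q_\varphi(\xi)$ for every $\xi$ (obtained by taking $\varphi\ge 0$, arranging $\varphi_n\uparrow\varphi$, and using monotone convergence in the surface integral; then splitting $\varphi=\varphi^+-\varphi^-$), this identifies the uniform limit with $q_\varphi$ and gives continuity. Replacing your coarea/H\"older tail argument by this $W^{1,1}\hookrightarrow C$ Cauchy argument makes your proof complete and matches the paper's.

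A smaller point: in the finite-dimensional step you should be explicit that Proposition~\ref{W11} alone only gives a continuous \emph{representative}; to show that $q_{\varphi_x}$ itself (the actual surface integral) is continuous, the paper approximates $\varphi_x$ by smooth compactly supported functions (for which continuity is immediate from the $C^1$ parametrization of level sets), uses~\eqref{maggfond} to get uniform convergence of the densities, and identifies the limit with $q_{\varphi_x}$ via the fact that for a precise Sobolev function the restriction to a level set coincides $H_{m-1}$-a.e.\ with its trace. Your ``IFT change of variables'' remark gestures at this but does not quite pin it down.
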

\begin{proof}
We follow (and expand) the arguments of \cite{Feyel}. 

\vspace{3mm}
\noindent {\em Step 1.} As a  first  step we consider the case where $X$ is finite dimensional. Let  $\varphi$ be a $C^1$ function with compact support in $\O_{\delta}$. In this case $G$ is $C^1$, the level surfaces  $\{ G=\xi \}$ are $C^1$ for every  $\xi$ in $(-\delta, \delta)$, and recalling \eqref{thetaF} at each level surface we have
$$d\rho = \frac{e^{-\langle Q^{-1}x, x\rangle/2}}{ (\Det Q)^{1/2}(2\pi)^{m/2}} \frac{\|Q^{1/2}DG(x)\|}{\|DG(x)\|}\,H_{m-1}(dx)$$
if $m$ is the dimension of $X$ (here we have considered the usual scalar product and norm). Since the level surfaces have $C^1$ parametrizations and  the boundary integrals are surface integrals with  weight, then $q_{\varphi}$ depends continuously on  $\xi$. 

If  $\varphi \in W^{1,p}(X, \mu)$ is precise and has compact support,  
it is approached by a sequence of smooth $\varphi_n$ with compact support (the usual sequence of convolutions with standard mollifiers does the job). The restrictions of $\varphi_n $ to the surface $\{ G=\xi \}$ converge in $L^p(\{G=\xi\}, \rho)$ to the trace of $\varphi  $ at $\{ G=\xi \}$. Note that on every compact set the Gaussian $L^p$ and Sobolev spaces are equivalent to $L^p$ and Sobolev spaces with respect to the Lebesgue measure, and the trace of $\varphi$ at $\{G=\xi\}$ is well defined (e.g., \cite[Sect. 4.3]{EG}). Moreover by estimate  \eqref{maggfond} the sequence $q_{\varphi_n}$ is a Cauchy sequence in $L^{\infty}$, so that it converges in the sup norm (since each $q_{\varphi_n}$ is continuous). Then the  pointwise limit  of $q_{\varphi_n}$ is in fact a uniform limit, so that it is continuous in  $(-\delta, \delta)$. 

To identify such pointwise limit with $q_{\varphi}$ we remark that the trace at $\{G=\xi\}$ of $\varphi $ 
 coincides $\rho$-a.e. with the restriction to $\{G=\xi\}$ of any precise version of $\varphi$. This is because for $H_{n-1}$-almost every $x\in G^{-1}(\xi)$ both of them are equal to 
$$\lim_{r\to 0} \frac{1}{|B(x, r)| } \int_{B(x,r)}\varphi(y)dy,  $$
where $|\cdot |$ denotes the Lebesgue measure. The above formula may be easily deduced from e.g. \cite[Sect. 5.3]{EG}. 
 
Therefore, $\lim_{n\to\infty}q_{\varphi_n}(\xi) = q_{\varphi}(\xi)$, for each $\xi\in (-\delta, \delta)$.

If  $\varphi \in W^{1,p}(X, \mu)$ is precise, nonnegative  and has not compact support, for every $\varepsilon \in (0, \delta)$
it may be approached in $W^{1,p}(\O_{\delta -\epsilon})$ by a sequence of   functions $\varphi_n$ with compact support in $\O_{\delta}$,   that converge to $\varphi $ increasingly.    
Then, $\lim_{n\to\infty}q_{\varphi_n}(\xi) = q_{\varphi}(\xi)$ for every $\xi\in (-\delta + \epsilon, \delta - \epsilon)$ by monotone convergence. As before, by estimate  \eqref{maggfond} the sequence $q_{\varphi_n}$ is a Cauchy sequence in $L^{\infty}$, and  it converges in the sup norm, so that  the  pointwise limit $q_{\varphi}$ of $q_{\varphi_n}$ is  a uniform limit and  it is continuous in $(-\delta + \epsilon, \delta - \epsilon)$. Since $ \epsilon $ is arbitrary,  $q_{\varphi}$ is continuous in $(-\delta, \delta)$. 

If $\varphi $ attains both positive and negative values, we write it as the difference between its positive and negative parts $\varphi^+$ and $\varphi^-$, then the equality $q_{\varphi} = q_{\varphi^+} - q_{\varphi^-}$ yields that $q_{\varphi} $ is continuous.

Note that without the assumption  $D_HG\neq 0$, that in finite dimensions is equivalent to $1/|D_HG |_H \in L^{q}_{loc}(\O_{\delta} , \mu)$, such continuity properties still hold for functions $\varphi$ that vanish  in $\{ x\in \O_{\delta}:\; |D_HG|\leq \varepsilon\}$ for some $ \varepsilon >0$. 
 
 \vspace{3mm}
 
{\em Step 2.} Let $X$ be infinite dimensional. Consider any finite dimensional subspace $F\subset H$, the orthogonal (along $H$) projection on $F$, and its extension $\Pi^F$ to $X$ mentioned in \S \ref{Surface measures}. We recall that $\widetilde{F}$ is  the kernel  of $\pi^F$ and $\mu^F$, $\mu_F$ are  the image measures of $\mu$ on $F$, $\widetilde{F}$   through $\pi_F$,    $I-\pi^F$ respectively. Fix any Borel precise $\varphi \in W^{1,p}(X, \mu)$ that vanishes at 
$\{ x\in F:\; |D_H^F G|_H \leq \varepsilon \}$ for some $ \varepsilon>0$. 

For every $x\in \widetilde{F}$, we consider the subset  $\O_{\delta}^x$ of $F$ defined by 
$$\O_{\delta}^x:= \{y\in F:\; G(x+y)\in (-\delta, \delta)\}$$ 
(it may be empty for some $x$) and the section  $\varphi_x$ defined in $\O_{\delta}^x$ by $\varphi_x(y) := \varphi(x+y)$. 

By \cite[Thm. 4.5]{Feyel}, for $\mu_F$-almost all $x\in \widetilde{F}$ the section $G_x$ is a precise element of $\cap_{q>1}W^{1,q}(F, \mu^F)$, hence it is continuous and $\O_{\delta}^x$ is open. By the same theorem, $\varphi_x$ is a precise element of $W^{1,p}(F, \mu^F)$. Moreover, for every $x\in \widetilde{F}$, $\varphi_x$ vanishes in a neighborhood of the zeroes of $D_H^FG_x$, namely in the set $\{y\in F:\; |D_H^FG(x+y)|_H \leq \eps\}$. 
By Step 1,  its density
$q_{\varphi_x}$ (with $G$ replaced by $y\mapsto G_x(y) = G(x+y)$) is continuous. Moreover, by Corollary \ref{Cor:sofferenza},  
\begin{equation}
\label{sezione}
q_{\varphi}(\xi) = \int_{X} q_{\varphi_x}(\xi) (I-\pi^F)(d\mu) = \int_{\widetilde{F} } q_{\varphi_x}(\xi) (I-\pi^F)(d\mu), 
\end{equation}
for every $\xi\in (-\delta, \delta)$. Then the statement follows easily:   the function $q_{\varphi_x}$ is continuous in $(-\delta, \delta)$, so we may let $\xi\to \xi_0 \in (-\delta, \delta)$ and use the dominated convergence theorem, since for $\mu_F$-almost each $x\in \widetilde{F}$ and for each  $\xi \in (-\delta, \delta)$ we have
$$q_{\varphi_x}(\xi) \leq \| q_{\varphi_x}\|_{\infty} \leq C(\delta) \|q_{\varphi_x}\|_{W^{1,1}(-\delta, \delta)} $$
$$\leq C(\delta) C \|\varphi_x\|_{W^{1,p}(\O_{\delta}, \mu^F)} \leq   C(\delta) C \|\varphi \|_{W^{1,p}(\O_{\delta}, \mu)} .$$
where $C$ is the constant in formula formula \eqref{maggfond}. 

Now we consider a Borel nonnegative precise $\varphi \in W^{1,p}(X, \mu)$ with any support. Fix any ordering of the basis $\mathcal V$ and 
denote by $F_n$ the subspace generated by the first $n$ elements of  $\mathcal V$. 
There exists a sequence of functions $\varphi_n\in W^{1,p}(X, \mu)$ that converges increasingly to $\varphi $ in $ W^{1,p}(\O_{\delta}, \mu)$, such that each $\varphi_n$ is Borel, precise, and vanishes in $\{x\in \O_{\delta}:\; |D_H^{F_n}G|_H\leq 1/n\}$ (Lemma \ref{Le:approssimazione}). By the first part of the proof, the corresponding densities $q_{\varphi_n}$ are continuous, and by Corollary \ref{Cor:sofferenza} we have
$$q_{\varphi_n}(\xi) =  \int_{\{G=\xi\}} \frac{\varphi_n}{|D_HG|_H}d\rho _{\mathcal V} =  \int_{\{G=\xi\}} \frac{\varphi_n}{|D_HG|_H}d\rho .$$
By monotone convergence, for each $\xi$ we have $\lim_{n\to \infty}q_{\varphi_n}(\xi) =  q_{\varphi}(\xi) =  \int_{\{G=\xi\}} \varphi /|D_HG|_H\,d\rho $. Moreover applying estimate \eqref{maggfond} to $\varphi_n-\varphi_m$ yields that the sequence $q_{ \varphi_n}$ converges in $L^{\infty}$ and hence uniformly, since all of them are continuous functions. Therefore, the pointwise limit $q_{\varphi}$ is in fact a uniform limit, hence it is continuous. 
 
If $\varphi $ takes both positive and negative values, the statement follows by splitting it as $\varphi^+ - \varphi^-$. 
\end{proof}

\begin{Lemma}
\label{Le:approssimazione}
Let $\mathcal V = \{ v_k:\;k\in \N\}$, and set $  F_n =$  span $\{v_1, \ldots, v_n\}$. 
For each  $\varphi \in W^{1,p}(X, \mu)$ there exists a sequence of functions $\varphi_n\in W^{1,p}(X, \mu)$ whose restrictions to $\O_{\delta}$ converge to $\varphi $ in $ W^{1,s}(\O_{\delta}, \mu)$ for every $s<p$, 
and such that each $\varphi_n$ vanishes in $\{x\in \O_{\delta}:\; |D_H^{F_n}G|_H\leq 1/n\}$.  If $\varphi $ is Borel and precise, the functions $\varphi_n$ are Borel and precise too. 
\end{Lemma}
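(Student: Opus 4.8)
The plan is to set $\varphi_n:=\varphi^{(n)}\,\theta_n$, where $\theta_n$ is a smooth cutoff producing the required vanishing and $\varphi^{(n)}$ is a truncation of $\varphi$ that keeps $\varphi_n$ in $W^{1,p}(X,\mu)$. Write $g:=|D_HG|_H$ and $g_n:=|D_H^{F_n}G|_H=\big(\sum_{k=1}^{n}(D_kG)^2\big)^{1/2}$. Since $G\in\bigcap_{q>1}W^{2,q}(X,\mu)$, each $D_kG\in\bigcap_{q>1}W^{1,q}(X,\mu)$, hence $g_n$, being a Lipschitz function of finitely many Sobolev functions, lies in $\bigcap_{q>1}W^{1,q}(X,\mu)$; differentiating under the square root and using that the $1/g_n$ it produces is cancelled by $|D_H^{F_n}G|_H=g_n$ in the numerator, one gets $|D_Hg_n|_H\le|D^2_HG|_{{\mathcal H}_2}$ $\mu$-a.e. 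Moreover $g_n\nearrow g$ pointwise, and by Hypothesis \ref{HypG}-(3) $g>0$ $\mu$-a.e.\ in $\O_\delta$. Fix $\chi\in C^\infty(\R)$ with $0\le\chi\le1$, $\chi\equiv0$ on $(-\infty,1]$, $\chi\equiv1$ on $[2,\infty)$; for sequences $a_n\to0$ with $a_n\ge1/n$ and $M_n\to\infty$ to be chosen, set $\varphi^{(n)}:=(-M_n)\vee(\varphi\wedge M_n)$ and $\theta_n:=\chi(g_n/a_n)$. Then $\theta_n\equiv0$ on $\{g_n\le a_n\}\supseteq\{g_n\le1/n\}$, so $\varphi_n$ vanishes on $\{x\in\O_\delta:|D_H^{F_n}G|_H\le1/n\}$; since $|D_H\theta_n|_H\le C a_n^{-1}|D^2_HG|_{{\mathcal H}_2}\,\mathbf 1_{A_n}$ with $A_n:=\{a_n\le g_n\le2a_n\}$ and $|D^2_HG|_{{\mathcal H}_2}\in L^p(X,\mu)$, the product rule gives $\varphi_n\in W^{1,p}(X,\mu)$; and $\varphi_n$ is Borel and precise whenever $\varphi$ is, since $g_n$ (a Lipschitz image of the precise versions of the $D_kG$) has a Borel precise version, and compositions of Borel precise functions with continuous maps, and their products, are Borel precise.

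For the convergence I would decompose $D_H\varphi_n-D_H\varphi=\theta_n\big(D_H\varphi^{(n)}-D_H\varphi\big)+(\theta_n-1)D_H\varphi+\varphi^{(n)}D_H\theta_n$. The first term tends to $0$ in $L^p(\O_\delta,\mu;H)$ because $D_H\varphi^{(n)}-D_H\varphi=-\mathbf 1_{\{|\varphi|\ge M_n\}}D_H\varphi\to0$ a.e.\ ($M_n\to\infty$), dominated by $|D_H\varphi|_H$; the second because $|\theta_n-1|\le\mathbf 1_{\{g_n<2a_n\}}\to0$ $\mu$-a.e.\ in $\O_\delta$ (as $g_n\to g>0$, $a_n\to0$), again dominated by $|D_H\varphi|_H$; and $\varphi^{(n)}\to\varphi$ in $L^p(\O_\delta,\mu)$ likewise. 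So the statement reduces to showing $\|\varphi^{(n)}D_H\theta_n\|_{L^s(\O_\delta,\mu;H)}\to0$ for every $s\in(1,p)$.

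This last term is the heart of the matter, and the main obstacle: $|\varphi^{(n)}D_H\theta_n|_H\le C M_n a_n^{-1}|D^2_HG|_{{\mathcal H}_2}\mathbf 1_{A_n}$, so $D_H\theta_n$ blows up like $a_n^{-1}$ on $A_n$, while $A_n\subseteq\{g_n\le2a_n\}$ is \emph{not} contained in a set where $g$ is small — it may contain points at which $g=|D_HG|_H$ is large but the partial norm $g_n$ happens to be small, so Hypothesis \ref{HypG}-(3) does not directly control its measure. The remedy is the splitting $\{g_n\le2a_n\}\subseteq\{g\le4a_n\}\cup E_n$, where $E_n:=\{x\in\O_\delta:g_n(x)<g(x)/2\}$ (off $E_n$ one has $g\le2g_n\le4a_n$). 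On $\{g\le4a_n\}$, Hypothesis \ref{HypG}-(3) together with Chebyshev gives $\mu(\{g\le4a_n\}\cap\O_\delta)\le(4a_n)^Q\|1/g\|_{L^Q(\O_\delta,\mu)}^Q$ for all $Q$, and, combining with $|D^2_HG|_{{\mathcal H}_2}\in L^r(X,\mu)$ for large $r$ and H\"older, the contribution of $\{g\le4a_n\}$ to $\|\varphi^{(n)}D_H\theta_n\|_{L^s(\O_\delta)}$ is $\le CM_n a_n^{Q(1/s-1/r)-1}$. On $E_n$, since $g_n\nearrow g>0$ $\mu$-a.e.\ in $\O_\delta$ one has $\mu(E_n)\to0$, and its contribution is $\le CM_n a_n^{-1}\mu(E_n)^{1/s-1/r}$.

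It remains to pick the free parameters so that both bounds go to $0$ for every $s<p$ — the critical case being $s\uparrow p$, where the exponents $1/s-1/r$ and $Q(1/s-1/r)-1$ are smallest. Fixing $r>p$, $Q$ large, and $0<\gamma<\beta$ with $\beta+\gamma<1/p-1/r$, the choice $a_n:=\max(1/n,\mu(E_n)^{\beta})$, $M_n:=\min(n,\mu(E_n)^{-\gamma})$ works: indeed $1/n\le a_n\to0$ and $M_n\to\infty$, and bounding $a_n$ from below by $\mu(E_n)^\beta$ and $M_n$ from above by $\mu(E_n)^{-\gamma}$ (resp.\ from above by $1/n+\mu(E_n)^\beta$ and $n$ in the other term) shows both contributions are $O(\mu(E_n)^{\eta})+O(n^{-\eta})\to0$ for some $\eta>0$, uniformly in $s<p$, once $Q$ is large. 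This gives $\varphi_n\to\varphi$ in $W^{1,s}(\O_\delta,\mu)$ for all $s<p$. (If a monotone sequence is wanted, replace $a_n$ by $\min_{m\le n}a_m$ — still $\ge1/n$ and $\to0$ — and $M_n$ by $\max_{m\le n}M_m$, so that $g_n/a_n$ and $M_n$ become nondecreasing and $\varphi_n\nearrow\varphi$ when $\varphi\ge0$; the general case follows from $\varphi=\varphi^+-\varphi^-$.)
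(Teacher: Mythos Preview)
Your proof is correct. Both your argument and the paper's build $\varphi_n$ by multiplying $\varphi$ by a cutoff of the form $\chi(g_n/a_n)$ with $g_n:=|D_H^{F_n}G|_H$, and both use $|D_Hg_n|_H\le|D_H^2G|_{\mathcal H_2}$; the differences lie in how the dangerous cross term is handled. The paper takes $a_n=1/n$, does not truncate $\varphi$, and on $A_n=\{g_n\ge 1/n\}$ claims $\int_{A_n}|D_H^2G|^{\alpha}_{\mathcal H_2}\,d\mu\le C_q/n^q$ via the device of multiplying and dividing by $g_n^q$; but $A_n$ increases to essentially all of $\O_\delta$ (since $g_n\nearrow g>0$ a.e.\ there), so that inequality cannot hold as written, and even on the support $\{1/n\le g_n\le 2/n\}$ of $\theta'$ the smallness of $g_n$ does not by itself force $g=|D_HG|_H$ to be small. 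Your argument repairs both soft spots: the truncation $\varphi^{(n)}=(-M_n)\vee(\varphi\wedge M_n)$ is what actually secures $\varphi_n\in W^{1,p}(X,\mu)$ (without it $\varphi\,D_H\theta_n$ lies only in $\bigcap_{r<p}L^r$, because $D_H\theta_n$ need not be bounded), and the splitting $\{g_n\le 2a_n\}\subset\{g\le 4a_n\}\cup\{g_n<g/2\}$ is precisely what is needed to invoke Hypothesis~\ref{HypG}(3)---the first piece has measure $O(a_n^Q)$ by Chebyshev applied to $1/g$, the second has measure $o(1)$ by pointwise convergence---after which the adaptive choice of $a_n,M_n$ balances the two contributions. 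In short: same skeleton as the paper, but with the two places where the paper's argument is incomplete properly reinforced.
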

\begin{proof}
Let  $\theta: \R\mapsto \R$ be a smooth function such that $\theta(\xi) = 0$ for $0\leq \xi\leq 1$, $\theta(\xi) = \xi-1$ for $1\leq \xi\leq 2$, $\theta(\xi) = 1$ for $\xi \geq 2$, and set 
$$\varphi_n(x) = \varphi(x) \theta (n|D_H^{F_n}G(x)|_H), \quad x\in X. $$
Then $\varphi_n$ vanishes if $|D_H^{F_n}G(x)|_H\leq 1/n$,   
$\varphi_n\to \varphi $ in  $L^{p}(\O_{\delta},\mu)$ by dominated convergence, and moreover
$$D_H\varphi_n = D_H\varphi(x) \theta (n|D_Hf(x)|_H) + n \varphi(x)\theta'(n|D_H^{F_n}G(x)|_H) D_H(|D_H^{F_n}G(x)|_H).$$
The first term goes to   $D_H\varphi_n$ in  $L^{p}(\O_{\delta},\mu)$, still by dominated convergence. We have to show that the second term vanishes as $n\to \infty$. 

 We have
$$|D_H^{F_n}G(x)|_H = \bigg(\sum_{i=1}^{n} D_iG(x)^2\bigg)^{1/2}$$
and for each  $k\in \N$
$$|D_k(|D_H^{F_n}G(x)|_H)| = \frac{|\sum_{i=1}^{n} D_iGD_{ik}G(x)|}{|D_H^{F_n}G(x)|_H} \leq 
( \sum_{i=1}^{n} (D_{ik}G(x))^2)^{1/2}$$
so that 
$$|D_H(|D_H^{F_n}G(x)|_H)|_H \leq ( \sum_{i,k=1}^{\infty} (D_{ik}G(x))^2)^{1/2} = |D^2_HG(x)|_{{\mathcal H}_2}. $$
Setting $A_n = \{x\in \O_{\delta}:\, |D_H^{F_n}G(x)| \geq 1/n\}$, we consider the integral
$$\int_{A_n}| \varphi(x) D_H(|D_H^{F_n}G(x)|_H)|_H^s\mu(dx) $$
with  $s\leq p$ (note that on the complement of $A_n$ we have $\theta'(n|D_H^{F_n}G(x)|_H)=0$). Using the H\"older inequality we get
$$\int_{A_n}| \varphi(x) D_H(|D_H^{F_n}G(x)|_H)|_H^s\mu(dx) $$
$$\leq \|\varphi\|_{L^p(\O_\delta)} \bigg(\int_{A_n}| D_H(|D_H^{F_n}G(x)|_H)|_{H}^{p/(p-s)}\mu(dx) \bigg)^{(p-s)/p}$$
$$ \leq  \|\varphi\|_{L^p(\O_\delta)} \bigg( \int_{A_n}  |D^2_HG(x)|_{{\mathcal H}_2}^{p/(p-s)}\mu(dx) \bigg)^{(p-s)/p} .
$$
Using the assumptions that $|D^2_HG(x)|_{{\mathcal H}_2}^{p/(p-s)}$ and $ |D_H G(x)|_H^q$ are  $L^2$ functions, 
 for $q\geq 1$ we get
$$\int_{A_n}  |D^2_HG(x)|_{{\mathcal H}_2}^{p/(p-s)}\mu(dx)  = \int_{A_n}  |D^2_HG(x)|_{{\mathcal H}_2}^{p/(p-s)}\frac{|D_H^{F_n}G(x)|_H^q}{|D_H^{F_n}G(x)|_H^q}\mu(dx) $$
$$\leq \frac{1}{n^q} 
\int_{A_n}  |D^2_HG(x)|_{{\mathcal H}_2}^{p/(p-s)} |D_H G(x)|_H^q \mu(dx)
\leq \frac{C_q}{n^q}$$
so that, taking  $q >ps/(p-s) $ we find
$$n \bigg(\int_{\O_{\delta}} |\varphi(x)\theta'(n|D_H^{F_n}G(x)|_H) |D_H(|D_H^{F_n}G(x)|_H)|_{H}^s\mu(dx) \bigg)^{1/s}$$
$$\leq 
n \bigg(\int_{A_n} | \varphi(x)| |D_H(|D_H^{F_n}G(x)|_H)|_{H}^s\mu(dx) \bigg)^{1/s}\leq 
\frac{C_q^{(p-s)/ps}}{n^{q(p-s)/ps-1}}(\|\varphi\|_{L^p(\O_{\delta})})^{1/s}$$
which vanishes as $n\to \infty$. 
\end{proof}

%%%%%%%%%%%%%%%%%%%%%%%%%%%%%%%%%%%%%%%%%%%%%%%%%%%%%
\section{Traces on level surfaces}
\label{sect:traces}
%%%%%%%%%%%%%%%%%%%%%%%%%%%%%%%%%%%%%%%%%%%%%%%%%%%%%

Throughout the section we assume that Hypothesis \ref{HypG} holds. 
Let us state the integration by parts formula and estimates that are the starting point for our study of traces.

\begin{Proposition}
\label{p5.1}
Let $p>1$. Then for every Borel precise  $\varphi\in W^{1,p}(X, \mu)$ and for each  $k\in \N$, \eqref{parti} holds. 
Moreover, 
\begin{equation}
\label{partitraccia}
  \int_{\{G=0\}} | \varphi|^q |D_HG|_H  \,d\rho  =  q   \int_{\O}  | \varphi|^{q-2}\varphi \langle D_H\varphi, D_HG\rangle_H   \,d\mu +  \int_{\O} LG \,| \varphi|^q \,d\mu , 
  \end{equation}
and 
\begin{equation}
\label{partitraccia2}
 \int_{\{G=0\}} | \varphi|^q   \,d\rho = \ds  q   \int_{\O}  | \varphi|^{q-2}\varphi \frac{\langle D_H\varphi, D_HG\rangle_H }{|D_HG|_H }  \,d\mu + \int_{\O} \di \bigg(\frac{D_HG}{|D_HG|_H}\bigg)  \,| \varphi|^q \,d\mu 
\end{equation}
for every $q\in [1,p)$. 
\end{Proposition}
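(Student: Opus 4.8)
The plan is to establish first the scalar integration by parts formula \eqref{parti} for a Borel precise $\varphi\in W^{1,p}(X,\mu)$, and then to deduce the power identities \eqref{partitraccia}--\eqref{partitraccia2} by inserting suitable products into \eqref{parti} and summing over the directions $v_k$. The proof of \eqref{parti} rests on approximating $\one_{\O}$ from inside by smooth functions of $G$; the point is that the continuity of the densities $q_{\cdot}$ obtained in Theorem \ref{cont} is exactly what legitimizes the limiting step.

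For \eqref{parti}, fix $\theta\in C^\infty(\R)$ with $\theta\equiv 1$ on $(-\infty,-1]$, $\theta\equiv 0$ on $[0,\infty)$ and $0\le\theta\le 1$, and set $\theta_\eps(\xi):=\theta(\xi/\eps)$ for $0<\eps<\delta$, so that $\theta_\eps'$ is supported in $(-\eps,0)$ with $\int_\R\theta_\eps'\,d\xi=-1$. Since $G\in W^{2,q}(X,\mu)$ for every $q$, the function $\theta_\eps\circ G$ belongs to $W^{1,q}(X,\mu)$ for every $q$, with $D_H(\theta_\eps\circ G)=(\theta_\eps'\circ G)\,D_HG$; hence $\varphi\,(\theta_\eps\circ G)\in W^{1,r}(X,\mu)$ for some $r>1$, and \eqref{partiX} applied to it gives
\begin{equation*}
\int_X(D_k\varphi)(\theta_\eps\circ G)\,d\mu+\int_X\varphi\,(\theta_\eps'\circ G)\,D_kG\,d\mu=\int_X\hat v_k\,\varphi\,(\theta_\eps\circ G)\,d\mu .
\end{equation*}
Letting $\eps\to 0$, the first and third integrals converge by dominated convergence (using $\mu(G^{-1}(0))=0$, which follows from Lemma \ref{Cor:coarea} with $\varphi\equiv 1$) to $\int_{\O}D_k\varphi\,d\mu$ and $\int_{\O}\hat v_k\varphi\,d\mu$. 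For the middle integral I would pass to the pull-back under $G$: since $\varphi D_kG$ is a Borel precise element of $W^{1,s}(X,\mu)$ for some $s>1$, Lemma \ref{Cor:coarea} gives $\int_X\varphi(\theta_\eps'\circ G)D_kG\,d\mu=\int_{-\delta}^{\delta}\theta_\eps'(\xi)\,q_{\varphi D_kG}(\xi)\,d\xi$, and Theorem \ref{cont} tells us that $q_{\varphi D_kG}$ is continuous on $(-\delta,\delta)$ with $q_{\varphi D_kG}(0)=\int_{G^{-1}(0)}\frac{D_kG}{|D_HG|_H}\varphi\,d\rho$. As $-\theta_\eps'$ is an approximate identity concentrating at $0^-$, this integral tends to $-q_{\varphi D_kG}(0)$, and collecting the three limits yields \eqref{parti}; by Proposition \ref{pr:cap} the surface integral is independent of the precise version chosen.

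For \eqref{partitraccia}, fix $q\in[1,p)$ and put $w:=|\varphi|^q$, a nonnegative Borel precise element of $W^{1,p/q}(X,\mu)$ with $D_Hw=q|\varphi|^{q-2}\varphi\,D_H\varphi$. With $F_N:=\mathrm{span}\{v_1,\dots,v_N\}$, set $\Psi_N:=w\,D_H^{F_N}G=\sum_{k=1}^N(wD_kG)v_k$; then $\Psi_N\in W^{1,s}(X,\mu;H)$ for some $s>1$ and $\di\Psi_N=\sum_{k=1}^N\bigl(D_k(wD_kG)-wD_kG\,\hat v_k\bigr)$. Applying \eqref{parti} to each $\psi_k:=wD_kG$ (again a Borel precise element of $W^{1,s}(X,\mu)$ with $s>1$) and summing over $k=1,\dots,N$ gives $\int_{\O}\di\Psi_N\,d\mu=\int_{G^{-1}(0)}|D_HG|_H^{-1}\bigl(\sum_{k=1}^N(D_kG)^2\bigr)w\,d\rho$. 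As $N\to\infty$, the right side increases (monotone convergence, $w\ge 0$, $\sum_{k\le N}(D_kG)^2\uparrow|D_HG|_H^2$) to $\int_{G^{-1}(0)}|D_HG|_H\,w\,d\rho$, while on the left $D_H^{F_N}G\to D_HG$ in $W^{1,q}(X,\mu;H)$ for every $q$, so $\di\Psi_N\to\di(wD_HG)=w\,LG+q|\varphi|^{q-2}\varphi\langle D_H\varphi,D_HG\rangle_H$ in $L^s(X,\mu)$ (using $\di D_HG=LG$), and $\int_{\O}\di\Psi_N\,d\mu$ converges to the right side of \eqref{partitraccia}. Equating the two limits gives \eqref{partitraccia}, and shows in passing that the surface integral is finite.

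For \eqref{partitraccia2} one repeats this with $\Psi_N$ replaced by $w\,\chi(G)\,|D_HG|_H^{-1}D_H^{F_N}G$, where $\chi\in C_c^\infty(-\delta,\delta)$ equals $1$ near $0$: the cutoff confines the factor $1/|D_HG|_H$ to $\O_\delta$, where Hypothesis \ref{HypG}(3) makes $\chi(G)|D_HG|_H^{-1}D_H^{F_N}G$ a bounded element of $W^{1,q}(X,\mu;H)$ for every $q$, and the surface term converges to $\int_{G^{-1}(0)}\chi(0)w\,d\rho=\int_{G^{-1}(0)}w\,d\rho$ because $|D_H^{F_N}G|_H^2/|D_HG|_H^2\uparrow 1$; expanding $\di(w\chi(G)D_HG/|D_HG|_H)$ and using that $\chi\equiv 1$ near $\{G=0\}$ — together with the pointwise bound $|\langle D_H\varphi,D_HG\rangle_H|\le|D_H\varphi|_H|D_HG|_H$, which makes the first term on the right of \eqref{partitraccia2} integrable on $\O$ — then yields \eqref{partitraccia2}. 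The step I expect to be the real obstacle is the passage $\eps\to 0$ in the middle integral of \eqref{parti}: the argument collapses unless $q_{\varphi D_kG}$ is \emph{genuinely} continuous, not merely a.e.\ equal to a continuous function, which is exactly the delicate content of Section \ref{sect:cont} (Theorem \ref{cont}). A secondary difficulty is that $1/|D_HG|_H$ is controlled only on $\O_\delta$, which forces the vector field $D_HG/|D_HG|_H$ in \eqref{partitraccia2} to be handled through the cutoff $\chi$ and with some care about the resulting cutoff terms.
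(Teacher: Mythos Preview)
Your proof of \eqref{parti} is essentially the paper's own argument: a smooth cutoff $\theta_\eps\circ G$ in place of the paper's piecewise-linear one, with the limiting step legitimized by the continuity of $q_{\varphi D_kG}$ from Theorem~\ref{cont}.

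For \eqref{partitraccia} you take a genuinely different route. The paper does not feed \eqref{parti} back into itself; it re-runs the $\eps$-approximation from scratch with $|\varphi|^qD_kG$ in place of $\varphi$ in \eqref{prima}, sums over $k$ (using $\sum_k(D_{kk}G-\hat v_kD_kG)\to LG$ in every $L^r$), and invokes Theorem~\ref{cont} a second time to identify $\lim_{\eps\to 0}\eps^{-1}\int_{-\eps<G<0}|\varphi|^q|D_HG|_H^2\,d\mu$ with the surface integral. Your approach --- apply the already-proved \eqref{parti} to each Borel precise $wD_kG\in W^{1,s}(X,\mu)$, sum to $N$, then let $N\to\infty$ by monotone convergence on the boundary and $L^s$-convergence of $\di\Psi_N$ in $\O$ --- is correct and more modular, at the cost of trading the $\eps\to 0$ limit for an $N\to\infty$ limit. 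The paper's way has the minor advantage that all three formulae are produced by a single mechanism.

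For \eqref{partitraccia2} your argument has a gap. The cutoff $\chi(G)$ does give a valid identity, but expanding $\di\bigl(w\chi(G)D_HG/|D_HG|_H\bigr)$ produces, besides the two desired terms multiplied by $\chi(G)$, the extra contribution $\int_\O w\,\chi'(G)\,|D_HG|_H\,d\mu$, and the surviving integrals live on $\O\cap\{\chi(G)\neq 0\}\subset\O\cap\O_\delta$ rather than on $\O$. Saying ``$\chi\equiv 1$ near $\{G=0\}$'' does not bridge this: to pass from the localized identity to \eqref{partitraccia2} one would have to let $\chi\to 1$ on $(-\infty,0]$, and that requires precisely the integrability of $|\varphi|^q\,\di(D_HG/|D_HG|_H)$ on all of $\O$ that the cutoff was introduced to avoid. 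The paper, by contrast, inserts $|\varphi|^qD_kG/|D_HG|_H$ directly into \eqref{prima} with no cutoff and repeats the $\eps$-limit, so no $\chi'$-term appears and the integrals are over $\O$ from the start; note, however, that this tacitly uses $|\varphi|^qD_kG\,|D_HG|_H^{-1}\,(\theta_\eps\circ G)\in W^{1,r}(X,\mu)$, which needs control of $1/|D_HG|_H$ on $\O$ and not merely on $\O_\delta$.
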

\begin{proof} For $\eps >0$ we define a function $\theta_{\eps}$ by 
$$\theta_{\eps}(\xi) :=\left\{ \begin{array}{ll} 
1, & \xi\leq  -\eps, 
\\
-\frac{1}{ \eps} \xi, &  -\eps <\xi < 0 , 
\\
0, & \xi \geq 0 .
\end{array}\right.  $$
and we consider the function
$$ x\mapsto  \varphi(x)  \theta_{\eps}(G(x)), $$
which belongs to $W^{1,q}(X, \mu)$ for each $q <p$, and its derivative along  $v_k$ is  $  \theta_{\eps}'(G(x))D_k G(x)\varphi(x) $ $+$ $  \theta_{\eps}(G(x))D_k \varphi(x)$. Applying the integration by parts formula \eqref{partiX} we get 
\begin{equation}
\label{prima}
\int_X (D_k\varphi)  (\theta_{\eps}\circ G)  \,d\mu   -\frac{1}{ \eps} \int_{ -\eps<G <0} \varphi D_k G \,d\mu =  
\int_X \hat{v}_k \varphi (  \theta_{\eps}\circ G)\,d\mu, \quad k\in \N.
\end{equation}
Let us prove that   \eqref{parti} holds. As $\eps\to 0$, $\theta_{\eps}\circ G$ converges pointwise to  $ \one_{\O}$. Since  $ \theta_{\eps}\circ G \leq 1$, by dominated convergence we get
$$\exists   \lim_{\eps \to 0} \frac{1}{  \eps} \int_{ -\eps<G <0} \varphi D_k G \,d\mu  = \int_{\O} D_k\varphi \,d\mu -   \int_{\O} \hat{v}_k\varphi \,d\mu .$$
Let us identify this limit as a surface integral. Using the notation of section \ref{sect:cont},   we have
 $\int_{ -\eps<G <0}  \varphi D_k G \,d\mu 
=  \int_{-\eps}^0 q_{ \varphi D_k G}(\xi)d\xi  $. 
Since   $\varphi D_kG  $ belongs to   $W^{1,q}(X, \mu)$ for every $q<p$ and it is Borel measurable and precise, by Theorem \ref{cont} the function $q_{ \varphi D_k G}$ is continuous at $0$. Then, 
$$\lim_{\eps \to 0}  \frac{1}{ \eps} \int_{ -\eps<G <0}    \varphi D_k G \,d\mu  =   q_{ \varphi D_k G}(0)
=
\int_{\{G=0\} } \frac{D_kG}{ |D_HG|_H}  \varphi   \,d\rho $$
and \eqref{parti} follows. 

Now let us prove that   \eqref{partitraccia} holds. For every  $k$, the function  $( \theta_{\eps}\circ G)| \varphi |^qD_kG$ belongs to  $W^{1,q} (X, \mu)$ for every $q<p$. We may replace  $\varphi$ by $  |\varphi|^qD_kG$ in \eqref{prima}, and sum over $k$, since the series  $ \sum_{k=1}^{n}  D_{kk}G(x) -  \hat{v}_kD_kG(x) $ converges to  $LG$ in  $L^r(X, \mu)$ for each $r$ (e.g., \cite[Prop. 5.8.8]{Boga}). We obtain
$$\int_X q|\varphi|^{q-2}\varphi  \langle D_H\varphi, D_HG\rangle_H  ( \theta_{\eps}\circ G) \,d\mu + \int_X  LG \,|\varphi|^q  ( \theta_{\eps}\circ G) \,d\mu $$
$$= \frac{1}{ \eps}  \int_{-\eps<G <0}  |\varphi|^q |D_HG|_H^2  \,d\mu .$$
Proceeding as in the proof of  \eqref{parti}, as $\eps\to 0$ by dominated convergence we get
$$\lim_{\eps \to 0}  \int_X q|\varphi|^{q-2}\varphi  \langle D_H\varphi, D_HG\rangle_H  ( \theta_{\eps}\circ G )\,d\mu  = 
  \int_{\O} q|\varphi|^{q-2}\varphi \langle D_H\varphi, D_HG\rangle_H  \,d\mu, $$
$$\lim_{\eps \to 0}  \int_X LG \,|\varphi|^q (\theta_{\eps}\circ G ) \,d\mu =    \int_{\O} LG \,|\varphi|^q    \,d\mu .$$
Then, there exists the limit
$$\lim_{\eps \to 0}  \frac{1}{ \eps} \int_{ -\eps<G<0} | \varphi|^q |D_HG|_H^2  \,d\mu =  \int_{\O} q|\varphi|^{q-2}\varphi  \langle D_H\varphi, D_HG\rangle_H  \,d\mu +  \int_{\O} LG \,|\varphi|^q   \,d\mu $$
that we identify as before with  a surface integral. Indeed,  since   $\psi:=| \varphi|^q  |D_HG|_H^2 \in W^{1,q} (X, \mu)$ for each  $q<p$ and it is Borel and precise, by Theorem \ref{cont} the density $q_{\psi}$ of $\psi \mu\circ G^{-1}$ with respect to the Lebesgue measure is continuous at $0$, and we get
$$\lim_{\eps \to 0}  \frac{1}{ \eps} \int_{ -\eps<G<0} | \varphi|^q |D_HG|_H^2  \,d\mu = q_{\psi}(0) = \int_{\{G=0\}} | \varphi|^q |D_HG|_H \,d\rho.$$
To prove \eqref{partitraccia2} we follow the same procedure, replacing   $\varphi $ in \eqref{prima} by $  |\varphi|^qD_kG/|D_HG|_H$,  and summing over  $k$. Then, we show that there exists the limit 
$\lim_{\eps \to 0}  \frac{1}{ \eps} \int_{ -\eps<G<0} | \varphi|^q |D_HG|_H   \,d\mu $ and we identify it with the surface integral $\int_{\{G=0\}} | \varphi|^q  \,d\rho$. We obtain
$$
\begin{array}{lll}
\ds  \int_{\{G=0\}} | \varphi|^q   \,d\rho  &= & \ds  q  \int_{\O}  | \varphi|^{q-2}\varphi \frac{\langle D_H\varphi, D_HG\rangle_H }{|D_HG|_H }  \,d\mu +  \int_{\O} \frac{LG}{|D_HG|_H } \,| \varphi|^q \,d\mu 
\\
\\
& -  &\ds  \int_{\O} \frac{\langle D^2_HG\, D_HG, D_HG\rangle_H}{|D_HG|_H^3 } \,| \varphi|^q \,d\mu 
\end{array}
$$
which coincides with \eqref{partitraccia2}, since 
$$\di \bigg( \frac{D_HG}{|D_HG|_H} \bigg) = \frac{LG}{|D_HG|_H} - \frac{\langle D^2_HG\, D_HG, D_HG\rangle_H}{|D_HG|_H^3}.$$
\end{proof}

\begin{Corollary}
\label{MaggTraccia}
For each  $p>1$ and  $\varphi \in W^{1,p}(\O, \mu)$ there exists $\psi \in \cap_{q<p}L^q(\{G=0\},  \rho)$ with the following property: if  $(\varphi_n )\subset   Lip (X)$ are such that $(\varphi_{n|\O})$ converge to $\varphi$ in   $W^{1,p}(\O, \mu)$, the sequence $(\varphi_{n | \{G=0\}} )$ converges to  $\psi$  in $L^q(\{G=0\}, \rho )$, for every $q<p$.
In addition, if either
 \begin{equation}
 \label{strana}
\mu-\mbox{\rm ess sup}_{x\in  \O} \;\;\di \bigg( \frac{D_HG}{|D_HG|_H} \bigg)<+\infty
  \end{equation}
or
\begin{equation}
 \label{strana1}
\left\{ \begin{array}{l}
\mu-\mbox{\rm ess sup}_{x\in  \O} \; |D_HG|_H <+\infty, \;\mu-\mbox{\rm ess sup}_{x\in \O}\;
 LG   <+\infty, 
 \\
 \\
\rho-\mbox{\rm ess inf}_{x\in  G^{-1}(0)} |D_HG(x)|_H  >0, 
 \end{array}\right. 
 \end{equation}
then $\varphi_{n|\{G=0\}}$ converges in $L^p(\{G=0\}, \rho )$.
\end{Corollary}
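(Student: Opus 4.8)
The plan is to build the trace operator as the limit of boundary restrictions, using the estimates from Proposition~\ref{p5.1} combined with the density statement in Proposition~\ref{approx}. First I would fix $\varphi \in W^{1,p}(\O,\mu)$ and a sequence $(\varphi_n)\subset Lip(X)$ with $\varphi_{n|\O}\to\varphi$ in $W^{1,p}(\O,\mu)$; such sequences exist by definition of the Sobolev space. The key is to show $(\varphi_{n|\{G=0\}})$ is Cauchy in $L^q(\{G=0\},\rho)$ for every $q<p$. For this I would apply \eqref{partitraccia2} to the Lipschitz function $\varphi_n-\varphi_m$ (which is Borel, precise, and lies in $W^{1,r}(X,\mu)$ for all $r$), obtaining
$$\int_{\{G=0\}}|\varphi_n-\varphi_m|^q\,d\rho = q\int_\O |\varphi_n-\varphi_m|^{q-2}(\varphi_n-\varphi_m)\frac{\langle D_H(\varphi_n-\varphi_m),D_HG\rangle_H}{|D_HG|_H}\,d\mu + \int_\O \di\Big(\frac{D_HG}{|D_HG|_H}\Big)|\varphi_n-\varphi_m|^q\,d\mu.$$
The first term on the right is bounded via H\"older (exponents $p/(q-1)$ on $|\varphi_n-\varphi_m|^{q-1}$, $p$ on $|D_H(\varphi_n-\varphi_m)|_H$, and the conjugate exponent on $1/|D_HG|_H$, which lies in every $L^s(\O_\delta,\mu)$ by Hypothesis~\ref{HypG}(3) — here one needs $q<p$ so the exponents close). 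The second term is handled by H\"older as well, using that $\di(D_HG/|D_HG|_H)$ is the $L^r$ function computed in the proof of Proposition~\ref{p5.1}. This shows $\|\varphi_n-\varphi_m\|_{L^q(\{G=0\},\rho)}^q\to 0$, so the limit $\psi$ exists in $L^q(\{G=0\},\rho)$; independence of the approximating sequence follows by the standard interlacing argument, and $\psi\in\cap_{q<p}L^q(\{G=0\},\rho)$.

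For the improved conclusion under \eqref{strana}, I would instead use \eqref{partitraccia2} with exponent $q=p$ directly: the first integral is bounded by $p\,\||\varphi_n-\varphi_m|^{p-1}\|_{L^{p'}(\O)}\||D_H(\varphi_n-\varphi_m)|_H/|D_HG|_H\|_{L^p(\O)}$, but now the factor $1/|D_HG|_H$ is dangerous at exponent $p$. The point of \eqref{strana} is that it lets us bypass the $1/|D_HG|_H$ issue entirely by keeping $|D_HG|_H$ out of the first term; more precisely one works from \eqref{partitraccia} rather than \eqref{partitraccia2} in a way that isolates only bounded coefficients — actually the cleanest route under \eqref{strana} is to note that $\di(D_HG/|D_HG|_H)$ being $\mu$-essentially bounded above makes the second term $\leq C\|\varphi_n-\varphi_m\|_{L^p(\O)}^p$, and the first term is bounded by $p\|\varphi_n-\varphi_m\|_{L^p(\O)}^{p-1}\cdot\|\langle D_H(\varphi_n-\varphi_m),D_HG/|D_HG|_H\rangle_H\|_{L^p(\O)}\le p\|\varphi_n-\varphi_m\|_{L^p(\O)}^{p-1}\|D_H(\varphi_n-\varphi_m)\|_{L^p(\O;H)}$, since $|D_HG/|D_HG|_H|_H=1$ pointwise. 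Thus both terms are controlled by $\|\varphi_n-\varphi_m\|_{W^{1,p}(\O,\mu)}^p\to 0$, giving convergence in $L^p(\{G=0\},\rho)$.

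Under \eqref{strana1} I would start from \eqref{partitraccia} instead. Its left side is $\int_{\{G=0\}}|\varphi_n-\varphi_m|^p|D_HG|_H\,d\rho$, and by the essential-infimum hypothesis on $|D_HG|_H$ on $G^{-1}(0)$ this dominates a constant times $\int_{\{G=0\}}|\varphi_n-\varphi_m|^p\,d\rho$. On the right side of \eqref{partitraccia} (with $q=p$), the term $\int_\O LG\,|\varphi_n-\varphi_m|^p\,d\mu$ is bounded by $(\mu\text{-ess sup}_\O LG)\|\varphi_n-\varphi_m\|_{L^p(\O)}^p$, and the term $p\int_\O |\varphi_n-\varphi_m|^{p-2}(\varphi_n-\varphi_m)\langle D_H(\varphi_n-\varphi_m),D_HG\rangle_H\,d\mu$ is bounded, via H\"older and $|D_HG|_H$ being $\mu$-essentially bounded on $\O$, by $p\|D_HG\|_\infty\|\varphi_n-\varphi_m\|_{L^p(\O)}^{p-1}\|D_H(\varphi_n-\varphi_m)\|_{L^p(\O;H)}$. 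Hence the right side is $\leq C\|\varphi_n-\varphi_m\|_{W^{1,p}(\O,\mu)}^p$, which again yields Cauchyness in $L^p(\{G=0\},\rho)$. The main obstacle, and the reason the general statement only gives $q<p$, is exactly the integrability of $1/|D_HG|_H$: Hypothesis~\ref{HypG}(3) places it in every $L^s(\O_\delta,\mu)$ but gives no control near $G^{-1}(0)$ itself, so the H\"older argument at the endpoint $q=p$ fails unless one of \eqref{strana}, \eqref{strana1} removes the singular factor. A minor technical point to be careful about is that \eqref{partitraccia} and \eqref{partitraccia2} were proved for Borel precise $\varphi\in W^{1,p}(X,\mu)$; since Lipschitz functions on $X$ are continuous, hence their own precise versions, the formulas apply verbatim to $\varphi_n-\varphi_m$.
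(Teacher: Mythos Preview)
Your proof is correct and follows essentially the same route as the paper: apply \eqref{partitraccia2} (resp.\ \eqref{partitraccia}) to the difference $\varphi_n-\varphi_m$, use H\"older for $q<p$, and under \eqref{strana} or \eqref{strana1} push through at $q=p$ exactly as you do.

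One small point worth correcting in your commentary: in the first integral of \eqref{partitraccia2} the vector $D_HG/|D_HG|_H$ has unit $H$-norm, so there is \emph{no} singular factor $1/|D_HG|_H$ to control there; this term is bounded by $q\|\varphi_n-\varphi_m\|_{L^p(\O)}^{q-1}\|D_H(\varphi_n-\varphi_m)\|_{L^p(\O;H)}$ for every $q\le p$ (you in fact use this under \eqref{strana}). The genuine obstruction at $q=p$ is the \emph{second} integral, where $\di(D_HG/|D_HG|_H)\in L^r(\O,\mu)$ for all $r$ but need not be bounded: H\"older gives a bound only when the residual exponent $p/(p-q)$ is finite, i.e.\ $q<p$. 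Also, $\O_\delta=G^{-1}(-\delta,\delta)$ \emph{is} a neighbourhood of $G^{-1}(0)$, so Hypothesis~\ref{HypG}(3) does give control there; it just does not make the divergence bounded. None of this affects the validity of your argument.
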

\begin{proof}  Let us use estimate \eqref{partitraccia2} for the functions $\varphi_n-\varphi_m$.
For every $q\geq 1$ we get 
$$\begin{array}{l}
\ds  \int_{\{G=0\}} | \varphi_n-\varphi_m|^q   \,d\rho  = \ds  q   \int_{\O}  | \varphi_n-\varphi_m|^{q-2}\varphi \frac{\langle D_H(\varphi_n-\varphi_m),  D_HG\rangle_H }{|D_HG|_H }  \,d\mu 
\\
\\ + \ds \int_{\O}\;\di \bigg( \frac{D_HG}{|D_HG|_H} \bigg)\,|\varphi_n-\varphi_m|^q \,d\mu 
\end{array}
 $$
and since  $\d \,D_HG/|D_HG|_H$ belongs to  $L^{r}(\O, \mu)$ for every $r$,  if $q<p$ the H\"older inequality yields that the sequence  $(\varphi_{n|\{G=0\}} )$ is a  Cauchy sequence in $L^q(\{G=0\},  \rho)$, so that it converges to a function $\psi \in L^q(\{G=0\},  \rho)$. 
 
Still by estimate \eqref{partitraccia2}, the limit $\psi$ is the same for all sequences $(\varphi_n)\in  Lip (\O)$ that converge to $\varphi$ in   $W^{1,p}(\O, \mu)$, and it is independent of $q$.

\vspace{2mm}

If \eqref{strana} holds, the above procedure works for $q=p$ too,   without need of the H\"older inequality. If  \eqref{strana1} holds we proceed in the same way, using \eqref{partitraccia} with $q=p$, instead of \eqref{partitraccia2}. 
\end{proof}

Proposition \ref{p5.1} and its corollary allow to define the traces at $G=0$ of the elements of $W^{1,p}(\O, \mu)$. 

\begin{Definition} For each  $\varphi \in W^{1,p}(\O, \mu)$, $p>1$,  we define the trace $\Tr \varphi$ of $\varphi$ at  $\{G=0\}$ as the function $\psi$ given by Corollary
\ref{MaggTraccia}. 
\end{Definition}

By Corollary \ref{MaggTraccia}, the trace operator is bounded from 
$W^{1,p}(\O, \mu)$ to  $L^{q}(\{G=0\}, \rho)$ for each  $q \in [1, p)$. If in addition  \eqref{strana} or  \eqref{strana1} hold, it is bounded from 
$W^{1,p}(\O, \mu)$ to  $L^{p }(\{G=0\}, \rho)$. 

Moreover we may extend formulae  \eqref{parti}, \eqref{partitraccia}  and  \eqref{partitraccia2} 
to all elements of  $W^{1,p}(\O, \mu)$.

\begin{Corollary}
\label{Cor:partiW1p}
For every  $\varphi \in W^{1,p}(\O, \mu)$, $p>1$, we have
\begin{equation}
\label{partiW1p}
\int_{\O} D_k\varphi \,d\mu =   \int_{\O} \hat{v}_k\varphi \,d\mu + \int_{G^{-1}(0)} \frac{D_k G}{|D_HG|_H }\Tr \varphi \,d\rho , \quad k\in \N .
\end{equation}
Moreover, formulae  \eqref{partitraccia} and \eqref{partitraccia2} hold for every $q\in [1,p)$, with   $\Tr \varphi$ replacing $\varphi$ in the  surface integrals.
\end{Corollary}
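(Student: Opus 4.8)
The plan is to pass to the limit in the formulas already established for Lipschitz functions, using the continuity of the trace operator proved in Corollary~\ref{MaggTraccia}. First I would fix $\varphi\in W^{1,p}(\O,\mu)$ and pick a sequence $(\varphi_n)\subset Lip(X)$ whose restrictions converge to $\varphi$ in $W^{1,p}(\O,\mu)$; such a sequence exists by the very definition of $W^{1,p}(\O,\mu)$ as the domain of the closure of $D_H$, together with the density of $Lip(X)_{|\O}$. For each $n$, formula~\eqref{parti} holds with $\varphi_n$ in place of $\varphi$, i.e.
\begin{equation*}
\int_{\O} D_k\varphi_n \,d\mu =   \int_{\O} \hat{v}_k\varphi_n \,d\mu + \int_{G^{-1}(0)} \frac{D_k G}{|D_HG|_H }\varphi_n \,d\rho , \quad k\in \N .
\end{equation*}

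Next I would let $n\to\infty$ term by term. The left-hand side converges to $\int_{\O}D_k\varphi\,d\mu$ since $D_H\varphi_n\to D_H\varphi$ in $L^p(\O,\mu;H)$ and $v_k$ is fixed. In the first term on the right, $\varphi_n\to\varphi$ in $L^p(\O,\mu)$ and $\hat v_k\in L^q(X,\mu)$ for every $q>1$, so by H\"older the term converges to $\int_{\O}\hat v_k\varphi\,d\mu$. For the surface integral, by Corollary~\ref{MaggTraccia} the restrictions $\varphi_{n|\{G=0\}}$ converge to $\Tr\varphi$ in $L^{q}(\{G=0\},\rho)$ for some $q>1$; since $D_kG/|D_HG|_H$ has modulus at most $1$ hence lies in $L^{q'}(\{G=0\},\rho)$ (indeed $\rho$ restricted to $\{G=0\}$ is finite, as $q_{1}(0)<\infty$ by Theorem~\ref{cont}), the surface integral converges to $\int_{G^{-1}(0)}\frac{D_kG}{|D_HG|_H}\Tr\varphi\,d\rho$. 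This yields~\eqref{partiW1p}.

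For~\eqref{partitraccia} and~\eqref{partitraccia2} with a fixed $q\in[1,p)$ I would argue analogously, applying Proposition~\ref{p5.1} to each $\varphi_n$ and passing to the limit. Here one should exploit that $W^{1,p}(\O,\mu)$-convergence gives $\varphi_n\to\varphi$ in $L^p$ and $D_H\varphi_n\to D_H\varphi$ in $L^p(\O,\mu;H)$, so by Lemma~\ref{product} (or by direct H\"older estimates) $|\varphi_n|^{q-2}\varphi_n\,D_H\varphi_n\to|\varphi|^{q-2}\varphi\,D_H\varphi$ in $L^{r}(\O,\mu;H)$ for a suitable $r>1$ when $q<p$, and likewise $|\varphi_n|^q\to|\varphi|^q$ in $L^{p/q}(\O,\mu)$. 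Since $\langle D_HG,\cdot\rangle_H/|D_HG|_H$, $LG$, $|D_HG|_H$ and $\di(D_HG/|D_HG|_H)$ all belong to $L^{s}(\O,\mu)$ for every $s>1$ by Hypothesis~\ref{HypG}, the volume integrals on the right converge to the stated limits. On the left, $|\varphi_n|^q\to|\varphi|^q$ in $L^{p/q}$ and hence, by Corollary~\ref{MaggTraccia} applied to $|\varphi_n|^q$ (which is again a sequence of Lipschitz functions approximating $|\varphi|^q\in W^{1,p/q}(\O,\mu)\subset W^{1,1}(\O,\mu)$), the surface integrals $\int_{\{G=0\}}|\varphi_n|^q\,d\rho$ and $\int_{\{G=0\}}|\varphi_n|^q|D_HG|_H\,d\rho$ converge; their limits are then identified with $\int_{\{G=0\}}|\Tr\varphi|^q\,d\rho$ and $\int_{\{G=0\}}|\Tr\varphi|^q|D_HG|_H\,d\rho$ using that $\varphi_{n|\{G=0\}}\to\Tr\varphi$ in $L^{q'}$ for some $q'>q$ together with the bound on $|D_HG|_H$ near $\{G=0\}$ (or directly, that $|\varphi_n|^q|_{\{G=0\}}=|\varphi_{n|\{G=0\}}|^q\to|\Tr\varphi|^q$ in $L^1(\{G=0\},\rho)$).

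The main obstacle I anticipate is the bookkeeping of exponents in the nonlinear terms $|\varphi|^{q-2}\varphi\langle D_H\varphi,D_HG\rangle_H$ when $q<p$ is close to $p$: one must check that the product of $|\varphi_n|^{q-1}$ (bounded in $L^{p/(q-1)}$), $|D_H\varphi_n|_H$ (bounded in $L^p$) and the $L^\infty$-bounded factor $\langle D_HG,\cdot\rangle_H/|D_HG|_H$ is controlled in some $L^r$ with $r>1$, which requires $1/(q-1)\cdot(q-1)/p + 1/p = q/p<1$, i.e. $q<p$, so the bound is available but at exponents tending to $1$ as $q\uparrow p$. A second, milder point is to confirm that $|D_HG|_H$ belongs to $L^{q'}(\{G=0\},\rho)$ for the relevant $q'$: this follows from the coarea-type identity (Lemma~\ref{Cor:coarea}) applied to $|D_HG|_H^{q'}$, or more simply from~\eqref{partitraccia} itself applied with $\varphi\equiv 1$ on a neighbourhood, which is already available. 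Once these integrability checks are in place, each passage to the limit is routine dominated/H\"older convergence.
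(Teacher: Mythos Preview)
Your approach---approximate by Lipschitz $\varphi_n$ and pass to the limit term by term---is precisely the paper's (one-line) proof. The only slip is the claim that $|\varphi_n|^q$ is Lipschitz (it need not be, since Lipschitz functions on $X$ may be unbounded), but your parenthetical alternative, using $\varphi_{n|\{G=0\}}\to\Tr\varphi$ in $L^{q'}$ with $q'>q$ to deduce $|\varphi_n|^q\to|\Tr\varphi|^q$ in $L^1(\{G=0\},\rho)$, is correct and is all that is needed.
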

\begin{proof} It is sufficient to use  \eqref{parti} (respectively, \eqref{partitraccia}, \eqref{partitraccia2}) for any sequence of Lipschitz continuous functions that converge to $\varphi \in W^{1,p}(\O, \mu)$, and take  the limit. \end{proof}

\begin{Remark}
Taking into account formulae  \eqref{partitraccia} and \eqref{partitraccia2} (that are equalities, not estimates), we see that the assumption $G\in W^{2,p}(X, \mu)$ for every $p$ is not very restrictive, since the right hand sides contain second order derivatives of $G$. 
\end{Remark}

Two natural questions arise. The first one is whether the trace operator is bounded from $W^{1,p}(\O, \mu)$ to  $L^p(\{G=0\}, \rho)$ under the only hypothesis \ref{HypG} or under weaker assumptions  than \eqref{strana} or \eqref{strana1}, the second one is whether the traces enjoy  some further regularity properties, as in the finite dimensional case. The problem of the characterization of the range of the trace operator seems to be out of hope for the moment. However, in a very special case (\S 5.1) this characterization is available. 

To get a positive answer to the first question, 
assumptions \eqref{strana} and \eqref{strana1} may be a little weakened. 
 
\begin{Lemma}
Assume that 
\begin{equation}
 \label{strana1bis}
\left\{ \begin{array}{l}
\mu-\mbox{\rm ess sup}_{x\in  G^{-1}(-\delta, 0)} \; |D_HG|_H <+\infty, \;\mu-\mbox{\rm ess sup}_{x\in  G^{-1}(-\delta, 0)}\;
 LG   <+\infty, 
 \\
 \\
 \rho -\mbox{\rm ess  inf}_{x\in  G^{-1}( 0)} \; |D_HG|_H >0
 \end{array}\right. 
 \end{equation}
or that
 \begin{equation}
 \label{stranabis}
\mu-\mbox{\rm ess sup}_{x\in G^{-1}(-\delta, 0)} \;|D_HG|_H <+\infty, \;
\mu-\mbox{\rm ess sup}_{x\in G^{-1}(-\delta, 0)} \;\;\di \bigg( \frac{D_HG}{|D_HG|_H} \bigg)<+\infty
  \end{equation}
for some $\delta >0$. Then the trace operator is bounded from $W^{1,p}(\O, \mu)$ to $L^p(G^{-1}(0), \rho)$, for every $p>1$. 
\end{Lemma}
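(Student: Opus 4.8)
The plan is to localize the estimates behind Corollary~\ref{MaggTraccia} to a neighborhood of $\{G=0\}$ by inserting a cutoff in $G$, so that the global requirements \eqref{strana}, \eqref{strana1} get replaced by the bounds on the shell $G^{-1}(-\delta,0)$ contained in \eqref{strana1bis}, \eqref{stranabis}. Fix a smooth $\chi:\R\mapsto[0,1]$ with $\chi(\xi)=1$ for $\xi\geq-\delta/2$ and $\chi(\xi)=0$ for $\xi\leq-\delta$, and set $M:=\sup|\chi'|$. Since $\Tr$ is already defined on $W^{1,p}(\O,\mu)$ and $Lip(X)$ is dense there, it is enough to prove
\begin{equation}
\label{goalbis}
\int_{\{G=0\}}|\varphi|^p\,d\rho\leq C\,\|\varphi\|_{W^{1,p}(\O,\mu)}^p,\qquad\varphi\in Lip(X),
\end{equation}
with $C$ independent of $\varphi$: applying \eqref{goalbis} to the differences of a sequence $(\varphi_n)\subset Lip(X)$ whose restrictions to $\O$ converge to a given $f\in W^{1,p}(\O,\mu)$ shows that $(\varphi_{n|\{G=0\}})$ is Cauchy in $L^p(\{G=0\},\rho)$, and identifying its $L^p$-limit with $\Tr f$ through a.e.-convergent subsequences (recall that $\varphi_{n|\{G=0\}}\to\Tr f$ already in $L^q(\{G=0\},\rho)$ for every $q<p$ by Corollary~\ref{MaggTraccia}) yields $\Tr f\in L^p(\{G=0\},\rho)$ together with the same bound.

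To establish \eqref{goalbis} I would fix $\varphi\in Lip(X)$ and set $\psi:=\varphi\,\chi(G)$. By the Leibniz rule $\psi\in\cap_{r>1}W^{1,r}(X,\mu)$, it is Borel and precise, $\psi=\varphi$ on $\{G=0\}$, $D_H\psi=\chi(G)\,D_H\varphi+\varphi\,\chi'(G)\,D_HG$, and both $\psi$ and $\chi'(G)$ vanish on $\O\setminus G^{-1}(-\delta,0)$. Applying Proposition~\ref{p5.1} to $\psi$ with any exponent $p'>p$ makes \eqref{partitraccia} and \eqref{partitraccia2} available at the borderline value $q=p$, with $\psi$ in place of $\varphi$. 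Under \eqref{strana1bis} I would use \eqref{partitraccia}: its left-hand side is $\geq(\mbox{\rm ess inf}_{G^{-1}(0)}|D_HG|_H)\int_{\{G=0\}}|\varphi|^p\,d\rho$ with a strictly positive constant, while every term on the right is supported in $G^{-1}(-\delta,0)$, where $|D_HG|_H$ and $LG$ are bounded above; hence there $|\psi|^{p-1}|D_H\psi|_H|D_HG|_H\leq C(|\varphi|^{p-1}|D_H\varphi|_H+|\varphi|^p)$ and $LG\,|\psi|^p\leq C|\varphi|^p$, and Young's inequality bounds the right-hand side by $C\|\varphi\|_{W^{1,p}(\O,\mu)}^p$. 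Under \eqref{stranabis} I would use \eqref{partitraccia2} instead: its left-hand side is exactly $\int_{\{G=0\}}|\varphi|^p\,d\rho$; the first term on the right is $\leq p\int_\O|\psi|^{p-1}|D_H\psi|_H\,d\mu$ and, since $|D_HG|_H$ is bounded on $G^{-1}(-\delta,0)$, there $|D_H\psi|_H\leq|D_H\varphi|_H+C|\varphi|$, so this term is again $\leq C\|\varphi\|_{W^{1,p}(\O,\mu)}^p$; and the term carrying $\di(D_HG/|D_HG|_H)$ is $\leq C\|\varphi\|_{L^p(\O,\mu)}^p$ because that divergence is bounded on the shell. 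In both cases \eqref{goalbis} drops out.

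These computations are routine; the two places that need care are (i) checking that, after the substitution of $\psi$ for $\varphi$, every surviving quantity is controlled purely by the finitely many $\mu$-ess sup and $\rho$-ess inf appearing in \eqref{strana1bis} or \eqref{stranabis} and restricted to $G^{-1}(-\delta,0)$ --- this is the very reason for introducing $\chi$, which multiplies precisely the contributions that would otherwise demand the global hypotheses \eqref{strana}, \eqref{strana1} --- and (ii) the fact that \eqref{partitraccia} and \eqref{partitraccia2} can be run at the endpoint exponent $q=p$ only on $\cap_{r>1}W^{1,r}(X,\mu)$, so one must argue first for Lipschitz $\varphi$ and only then pass to a general $f\in W^{1,p}(\O,\mu)$ by density, reconciling the $L^p$-limit found here with the $L^q$-limit ($q<p$) that defines $\Tr f$ in Corollary~\ref{MaggTraccia}. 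I expect step (ii), rather than any single estimate, to be the real subtlety of the argument.
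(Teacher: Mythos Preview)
Your proof is correct and follows essentially the same approach as the paper: insert a smooth cutoff $\chi(G)$ to localize to the shell $G^{-1}(-\delta,0)$, apply \eqref{partitraccia} or \eqref{partitraccia2} at the endpoint exponent $q=p$ to the localized function $\psi=\varphi\,\chi(G)\in\bigcap_{r>1}W^{1,r}(X,\mu)$, and bound the right-hand sides using only the shell hypotheses. The paper uses a symmetric cutoff $\theta$ (vanishing for $|\xi|\geq\delta$) and leaves your point (ii) implicit, but these are cosmetic differences; your more careful discussion of why the estimate on Lipschitz functions suffices is a welcome addition.
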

\begin{proof} It is sufficient to show that there exists $C>0$ such that 
\begin{equation}
\label{maggbuona}
\int_{\{G=0\}} | \varphi|^p   \,d\rho \leq C \|\varphi\|_{W^{1,p}(\O, \mu)}^p  ,
\end{equation}
for every Lipschitz continuous $\varphi$. 

Let $\theta\in C^{\infty}(\R)$ be such that $\theta(\xi) = 1$ for $|\xi |\leq \delta/2$, $\theta(\xi) =0$ for $|\xi |\geq \delta$. 
For any Lipschitz continuous $\varphi$  set   $\psi:= \varphi \cdot (\theta\circ G)$. Then $\psi \in  W^{1,q}(X, \mu)$ for every $q$. By Corollary \ref{partiW1p} we may apply  \eqref{partitraccia}, \eqref{partitraccia2} to 
$\psi$, with $q=p$, obtaining respectively
\begin{equation}
\label{partitraccian}
  \int_{\{G=0\}} | \varphi|^p |D_HG|_H  \,d\rho  =  p   \int_{G^{-1}(-\delta, 0)}  |\psi|^{p-2}\psi \langle D_H\psi, D_HG\rangle_H   \,d\mu +  \int_{G^{-1}(-\delta, 0)} LG \,|\psi|^p \,d\mu , 
  \end{equation}
and 
\begin{equation}
\label{partitraccia2n}
 \int_{\{G=0\}} | \varphi|^p   \,d\rho = \ds p \int_{G^{-1}(-\delta, 0)}  |\psi|^{p-2}\psi\frac{\langle D_H\psi, D_HG\rangle_H }{|D_HG|_H }  \,d\mu + \int_{G^{-1}(-\delta, 0)} \di \bigg(\frac{D_HG}{|D_HG|_H}\bigg)  \,| \psi|^p \,d\mu .
\end{equation}
If \eqref{strana1bis} holds, we estimate the right hand side of   \eqref{partitraccian}, while if  \eqref{stranabis} holds we estimate the right hand side of \eqref{partitraccia2n}. In both cases we get \eqref{maggbuona}. 
 \end{proof}
 
Below we state some properties of traces. 
To start with, we prove  a version of the integration by parts formula. 

\begin{Proposition}
\label{Prop:particlassica}
Let  $\varphi\in W^{1,p}(\O, \mu)$, $\psi\in W^{1,q}(\O, \mu)$ with $pq/(p+q)>1$. Then
\begin{equation}
\label{particlassica}
\int_{\O} D_k\varphi \,\psi \,d\mu = -\int_{\O} D_k\psi \,\varphi \,d\mu +  \int_{\O} \hat{v}_k\varphi \,\psi\, d\mu + \int_{G^{-1}(0)} \frac{D_k G}{|D_HG|_H }\Tr \varphi \,\Tr \psi \,d\rho , \quad k\in \N .
\end{equation}
\end{Proposition}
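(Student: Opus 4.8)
The plan is to reduce the statement to the already-established formula \eqref{partiW1p} by a product-rule argument, exactly as one does in the classical finite-dimensional case, and then deal with the integrability issues that arise because the two Sobolev exponents $p$ and $q$ are different. First I would invoke Lemma \ref{product}: since $pq/(p+q)>1$, we have $\varphi\psi\in W^{1,r}(\O,\mu)$ for $r=pq/(p+q)>1$, with $D_H(\varphi\psi)=\psi D_H\varphi+\varphi D_H\psi$, hence $D_k(\varphi\psi)=\psi D_k\varphi+\varphi D_k\psi$ in $L^r(\O,\mu)$. Applying Corollary \ref{Cor:partiW1p}, i.e. formula \eqref{partiW1p}, to the function $\varphi\psi\in W^{1,r}(\O,\mu)$ gives
\[
\int_{\O} \bigl(\psi D_k\varphi+\varphi D_k\psi\bigr)\,d\mu
= \int_{\O}\hat v_k\,\varphi\psi\,d\mu
+\int_{G^{-1}(0)}\frac{D_kG}{|D_HG|_H}\,\Tr(\varphi\psi)\,d\rho,
\]
and rearranging yields \eqref{particlassica} provided we check the two points below.

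The first point is that every integral in the claimed identity is finite. The term $\int_{\O}\hat v_k\varphi\psi\,d\mu$ is fine because $\hat v_k\in L^s(X,\mu)$ for all $s>1$ and $\varphi\psi\in L^r(X,\mu)$; more directly, $\varphi\in L^p$, $\psi\in L^q$ and $\hat v_k\in L^s$ with $1/p+1/q+1/s<1$ for $s$ large, so Hölder applies. The terms $\int_\O \psi D_k\varphi\,d\mu$ and $\int_\O \varphi D_k\psi\,d\mu$ are finite since $D_k\varphi\in L^p(\O,\mu)$, $\psi\in L^q(\O,\mu)$ (and symmetrically), again by Hölder with $1/p+1/q<1$. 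For the surface integral one uses that $\Tr$ is bounded from $W^{1,p}(\O,\mu)$ into $L^{p'}(G^{-1}(0),\rho)$ for every $p'<p$ and from $W^{1,q}(\O,\mu)$ into $L^{q'}(G^{-1}(0),\rho)$ for every $q'<q$ (Corollary \ref{MaggTraccia}), together with $\frac{D_kG}{|D_HG|_H}$, which is bounded in modulus by $1$; choosing $p',q'$ with $1/p'+1/q'\le 1$ — possible since $1/p+1/q<1$ — Hölder gives $\Tr\varphi\cdot\Tr\psi\in L^1(G^{-1}(0),\rho)$.

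The second, and genuinely load-bearing, point is the identity $\Tr(\varphi\psi)=\Tr\varphi\cdot\Tr\psi$ $\rho$-a.e. on $G^{-1}(0)$. I would prove it by approximation: pick sequences $(\varphi_n),(\psi_n)\subset Lip(X)$ with $\varphi_{n|\O}\to\varphi$ in $W^{1,p}(\O,\mu)$ and $\psi_{n|\O}\to\psi$ in $W^{1,q}(\O,\mu)$ (such sequences exist by the definition of the Sobolev spaces, or by Proposition \ref{approx}). Then, as in the proof of Lemma \ref{product}, $\varphi_n\psi_{n|\O}\to\varphi\psi$ in $W^{1,r}(\O,\mu)$, so by definition of the trace $\Tr(\varphi_n\psi_n)=\varphi_n\psi_{n|G^{-1}(0)}$ converges to $\Tr(\varphi\psi)$ in $L^{r'}(G^{-1}(0),\rho)$ for every $r'<r$. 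On the other hand $\varphi_{n|G^{-1}(0)}\to\Tr\varphi$ in $L^{p'}(G^{-1}(0),\rho)$ and $\psi_{n|G^{-1}(0)}\to\Tr\psi$ in $L^{q'}(G^{-1}(0),\rho)$, hence, after passing to a subsequence converging $\rho$-a.e., $\varphi_n\psi_{n|G^{-1}(0)}\to\Tr\varphi\cdot\Tr\psi$ $\rho$-a.e.; identifying the two limits gives the claim. I expect this approximation step — matching the $W^{1,r}$-convergence of the products against the separate $L^{p'},L^{q'}$-convergences of the traces, and being careful that all the relevant exponents can be chosen compatibly under $pq/(p+q)>1$ — to be the main technical obstacle; everything else is a bookkeeping exercise with Hölder's inequality. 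Once $\Tr(\varphi\psi)=\Tr\varphi\cdot\Tr\psi$ is in hand, substituting it into the displayed consequence of \eqref{partiW1p} and moving $\int_\O\varphi D_k\psi\,d\mu$ to the other side produces \eqref{particlassica} for all $k\in\N$.
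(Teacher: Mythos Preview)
Your proposal is correct and follows essentially the same route as the paper: apply Lemma \ref{product} to get $\varphi\psi\in W^{1,r}(\O,\mu)$, invoke \eqref{partiW1p} for the product, and then identify $\Tr(\varphi\psi)$ with $\Tr\varphi\cdot\Tr\psi$ by approximation. The only cosmetic differences are that the paper uses smooth cylindrical rather than Lipschitz approximants, and identifies the two limits via H\"older (choosing $s_1<p$, $s_2<q$ conjugate so that $\varphi_n\psi_n\to\Tr\varphi\cdot\Tr\psi$ in $L^1(G^{-1}(0),\rho)$) rather than via an a.e.\ subsequence argument.
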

\begin{proof} 
By Lemma \ref{product}, $\varphi\,\psi\in W^{1,r}(\O, \mu)$ for every $r\in (1, pq/(p+q))$. Formula  \eqref{parti} applied to $\varphi\,\psi$ yields
$$\int_{\O} D_k\varphi \,\psi \,d\mu = -\int_{\O} D_k\psi \,\varphi \,d\mu +  \int_{\O} \hat{v}_k\varphi \,\psi\, d\mu + \int_{G^{-1}(0)} \frac{D_k G}{|D_HG|_H }\Tr( \varphi \, \psi) d\rho , \quad k\in \N .$$
It remains to show that the trace of $ \varphi  \psi$ at $G^{-1}(0)$ coincides with the product of the respective traces. 
This follows as in Lemma \ref{product}, choosing sequences  $(\varphi_n)$, $(\psi_n)$  of smooth cylindrical functions whose restrictions to $\O$ converge to $\varphi$, $\psi$, in $ W^{1,p}(\O, \mu)$, $W^{1,q}(\O, \mu)$ respectively, so that $(\varphi_n \psi_n)$  converges to $\varphi \psi$, in $ W^{1,r}(\O, \mu)$ for $r<pq/(p+q)$. 
By estimate \eqref{partitraccia2} with $q=1$, $(\varphi_n\psi_{n|G^{-1}(0)})$ converges to $\Tr( \varphi  \psi) $ in $L^1(G^{-1}(0), \rho)$. On the other hand, still by estimate \eqref{partitraccia2}, 
$(\varphi_{n|G^{-1}(0)})$ converges to $\Tr( \varphi ) $ in $L^s(G^{-1}(0), \rho)$ for every $s< p$, 
$(\psi_{n|G^{-1}(0)})$ converges to $\Tr( \psi ) $ in $L^s(G^{-1}(0), \rho)$ for every $s< q$. By the H\"older inequality, 
$(\varphi_n\psi_{n|G^{-1}(0)})$ converges to $\Tr( \varphi ) \Tr( \psi ) $ in $L^1(G^{-1}(0), \rho)$, and the statement follows. 
\end{proof}

\begin{Proposition}
\label{tracciapuntuale}
For every  $\varphi\in W^{1,p}(X, \mu)$, the trace of $\varphi_{|\O}$ at $G^{-1}(0)$ coincides $\rho$-a.e. with the restriction to $G^{-1}(0)$ of any precise version $\widetilde{\varphi}$ of $\varphi$. 

As a consequence, the traces at $G^{-1}(0)$  of  $\varphi_{|\O} = \varphi_{|G^{-1}(-\infty, 0)}$ and of $\varphi_{|G^{-1}(0, +\infty)}$ coincide. 
\end{Proposition}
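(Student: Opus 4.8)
The plan is to return to the definition of the trace given by Corollary \ref{MaggTraccia} and to combine it with the capacitary estimate \eqref{cap} and Proposition \ref{pr:cap}. Fix $\varphi\in W^{1,p}(X,\mu)$ and a precise Borel version $\widetilde{\varphi}$ of $\varphi$. Since $C^1_b(X)\subset Lip(X)$ is dense in $W^{1,p}(X,\mu)$, pick $\varphi_n\in Lip(X)$ with $\varphi_n\to\varphi$ in $W^{1,p}(X,\mu)$. Because the restriction map $W^{1,p}(X,\mu)\to W^{1,p}(\O,\mu)$ is bounded, $\varphi_{n|\O}\to\varphi_{|\O}$ in $W^{1,p}(\O,\mu)$, so by Corollary \ref{MaggTraccia} the boundary restrictions $\varphi_{n|G^{-1}(0)}$ converge to $\Tr(\varphi_{|\O})$ in $L^q(G^{-1}(0),\rho)$ for every $q\in[1,p)$; in particular, along a subsequence they converge $\rho$-a.e.\ on $G^{-1}(0)$.

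On the other hand each $\varphi_n$, being continuous, is its own precise version, so $\varphi_n-\widetilde{\varphi}$ is a precise version of $\varphi_n-\varphi\in W^{1,p}(X,\mu)$. Applying \eqref{cap} we obtain a constant $c$, independent of $n$ and $r$, with $C_{1,p}\{x\in X:\ |\varphi_n(x)-\widetilde{\varphi}(x)|>r\}\le \frac{c}{r}\,\|\varphi_n-\varphi\|_{W^{1,p}(X,\mu)}$. Passing to a further subsequence (not relabelled) so that $\|\varphi_n-\varphi\|_{W^{1,p}(X,\mu)}\le 4^{-n}$, the sets $A_n:=\{|\varphi_n-\widetilde{\varphi}|>2^{-n}\}$ satisfy $C_{1,p}(A_n)\le c\,2^{-n}$, hence by countable subadditivity of the capacity the set $N:=\bigcap_{m}\bigcup_{n\ge m}A_n$ has $C_{1,p}(N)=0$, and $\varphi_n\to\widetilde{\varphi}$ pointwise on $X\setminus N$. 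By Proposition \ref{pr:cap}, $\rho(N)=0$, so $\varphi_{n|G^{-1}(0)}\to\widetilde{\varphi}_{|G^{-1}(0)}$ $\rho$-a.e.\ on $G^{-1}(0)$.

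Comparing the two $\rho$-a.e.\ limits of (a subsequence of) $\varphi_{n|G^{-1}(0)}$ gives $\Tr(\varphi_{|\O})=\widetilde{\varphi}_{|G^{-1}(0)}$ $\rho$-a.e., which is the first assertion; the identity is independent of the chosen precise version since two precise versions of $\varphi$ differ on a set of null $C_{1,p}$-capacity, which is $\rho$-negligible by Proposition \ref{pr:cap}. For the last statement, note that $G^{-1}(0,+\infty)$ is the sublevel set of $-G$, and $-G$ satisfies Hypothesis \ref{HypG} as well (one has $|D_H(-G)|_H=|D_HG|_H$, so (1) and (3) are unchanged, and (2) holds in the nontrivial case $\mu(G^{-1}(0,+\infty))>0$), while the Feyel--de La Pradelle measure on $(-G)^{-1}(0)=G^{-1}(0)$ is the same $\rho$. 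Applying the identity just proved to both $G$ and $-G$, the traces of $\varphi_{|G^{-1}(-\infty,0)}$ and of $\varphi_{|G^{-1}(0,+\infty)}$ each coincide $\rho$-a.e.\ with $\widetilde{\varphi}_{|G^{-1}(0)}$, hence with one another.

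The one delicate point, and the place where the argument has content, is the implication ``convergence in $W^{1,p}(X,\mu)$ implies $\rho$-a.e.\ convergence of precise versions on $G^{-1}(0)$''. This is not a statement about the measure $\rho$ directly; it must be routed through the Gaussian capacity $C_{1,p}$ via \eqref{cap}, a Borel--Cantelli argument for capacities, and the fact (Proposition \ref{pr:cap}) that $\rho$ does not charge sets of zero $C_{1,p}$-capacity. Once this is established, matching the two almost everywhere limits is immediate.
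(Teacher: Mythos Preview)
Your argument is correct, and it follows a genuinely different route from the paper. The paper applies formula \eqref{partitraccia} with $q=1$ directly to the precise function $\widetilde{\varphi}-\varphi_n$: this yields
\[
\int_{G^{-1}(0)} |\widetilde{\varphi}-\varphi_n|\, |D_HG|_H\,d\rho
= \int_{\O}\bigl(\sign(\varphi-\varphi_n)\langle D_H(\varphi-\varphi_n),D_HG\rangle_H + LG\,|\varphi-\varphi_n|\bigr)\,d\mu,
\]
and letting $n\to\infty$ the right-hand side vanishes while the left converges to $\int_{G^{-1}(0)}|\widetilde{\varphi}-\Tr\varphi|\,|D_HG|_H\,d\rho$, forcing $\widetilde{\varphi}=\Tr\varphi$ $\rho$-a.e. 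You instead bypass the integration formula entirely and route the argument through capacity: the estimate \eqref{cap} plus a Borel--Cantelli argument gives $\varphi_n\to\widetilde{\varphi}$ outside a $C_{1,p}$-null set, and Proposition \ref{pr:cap} transfers this to $\rho$-a.e.\ convergence on $G^{-1}(0)$, which you then match with the $L^q$ convergence to $\Tr\varphi$. Your approach is arguably more conceptual---it makes transparent that the result is a consequence of the general principle that $\rho$ does not charge capacity-null sets, and would apply verbatim to any surface measure with this property---while the paper's approach stays within the integration-by-parts machinery built in Section \ref{sect:traces} and avoids invoking subadditivity of capacities. Both handle the second assertion identically, by symmetry under $G\mapsto -G$.
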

\begin{proof}
For any sequence of Lipschitz continuous functions $(\varphi_n)$ converging to $\varphi_{|\O}$ in $W^{1,p}(\O, \mu)$, we have by \eqref{partitraccia} with $q=1$
$$\int_{G^{-1}(0)} |\widetilde{\varphi}-\varphi_n|\, |D_HG|_H\,d\rho = \int_{\O}(\sign (\varphi - \varphi_n)
\langle D_H(\varphi - \varphi_n), D_HG\rangle + LG |\varphi - \varphi_n|) d\mu .$$
Letting $n\to \infty$ the left hand side converges to $\int_{G^{-1}(0)} |\widetilde{\varphi}-\Tr \varphi|\, |D_HG|_H\,d\rho$, while the right hand side vanishes. Hence, $\widetilde{\varphi}-\Tr \varphi =0$ $\rho$-a.e.

Note that replacing $G$ by $-G$, the spaces $W^{1,p} (G^{-1}(0, +\infty))$ are well defined for every $p>1$. The trace of $\varphi_{|\O}$ $=$  $\varphi_{|G^{-1}(-\infty, 0)}$  and of $\varphi_{|G^{-1}(0, +\infty)}$ coincide, since both of them 
are $\rho$-a.e. equal to the restriction to $G^{-1}(0)$ of any precise version $\widetilde{\varphi}$ of $\varphi$. 
\end{proof}

\begin{Remark}
Formulae \eqref{parti}, \eqref{partitraccia}, \eqref{partitraccia2} may be taken as starting points to show other formulae and properties. For instance, 
\begin{itemize}
\item[(i)] taking $\varphi \equiv 1 $, formula \eqref{partitraccia2} shows that $\rho(G^{-1}(0)) <+\infty$ and gives a  way to compute or estimate it; 
\item[(ii)] taking any $h\in H$, for every $\varphi \in W^{1,p}(X, \mu)$ with $p>1$ we get
\begin{equation}
\label{partial_h}
\int_{\O} (\partial_h \varphi - \hat{h}\varphi)d\mu = \int_{G^{-1}(0)} \Tr \varphi \, \frac{\partial_h G}{|D_HG|_H}\,d\rho, 
\end{equation}
where  $\partial_h \varphi = \langle D_H\varphi, h\rangle_H$; 
\item[(iii)] taking any vector field $\Phi \in W^{1,p}(X, \mu; H)$ with $p>1$, $\Phi(x) = \sum_{k\in \N}\varphi_k(x) v_k$, applying \eqref{parti} to each $\varphi_k$ and summing up we obtain a version of the Divergence Theorem, 
$$\int_{\O} \d\,\Phi\, d\mu = \int_{G^{-1}(0)} \langle \Tr \Phi, \frac{D_HG}{|D_HG|_H}\rangle \,d\rho ,$$
where $ \Tr \Phi =  \sum_{k\in \N}(\Tr \varphi_k) v_k$. 
\end{itemize}
\end{Remark}

While the range of the trace operator is difficult to characterize, its kernel may be described in a simple way.

\begin{Proposition}
\label{equivalenza}
For every $p>1$ the kernel of the trace operator in $W^{1,p}(\O, \mu)$ consists of the elements $\varphi \in W^{1,p}(\O, \mu)$ whose null extension $\varphi_0$  to the whole $X$ belongs to $W^{1,p}(X, \mu)$.
\end{Proposition}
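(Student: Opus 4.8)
The plan is to prove both inclusions. For the easy direction, suppose $\varphi \in W^{1,p}(\O,\mu)$ and its null extension $\varphi_0$ lies in $W^{1,p}(X,\mu)$. I would first observe that $\varphi_0$ restricted to $\O$ equals $\varphi$, while $\varphi_0$ restricted to $G^{-1}(0,+\infty)$ is identically zero. By Proposition \ref{tracciapuntuale}, the trace at $G^{-1}(0)$ of $(\varphi_0)_{|\O}$ coincides $\rho$-a.e.\ with the trace of $(\varphi_0)_{|G^{-1}(0,+\infty)}$, which is the trace of the zero function, hence $0$. Thus $\Tr\varphi = 0$, so $\varphi$ is in the kernel.

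For the converse, suppose $\varphi \in W^{1,p}(\O,\mu)$ with $\Tr\varphi = 0$; I must show $\varphi_0 \in W^{1,p}(X,\mu)$. The natural route is to use the weak-derivative characterization of $W^{1,p}(X,\mu)$ recalled in Section \ref{Notation and preliminaries}: it suffices to exhibit $\Psi \in L^p(X,\mu;H)$ such that for every $h\in H$ and every smooth cylindrical $\varphi$ (test function), the identity
\begin{equation*}
\int_X \partial_h\eta\,\varphi_0\,d\mu = -\int_X \eta\,\langle\Psi,h\rangle_H\,d\mu + \int_X \eta\,\varphi_0\,\hat h\,d\mu
\end{equation*}
holds. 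The obvious candidate is $\Psi = (D_H\varphi)_0$, the null extension of $D_H\varphi$, which lies in $L^p(X,\mu;H)$ since $D_H\varphi \in L^p(\O,\mu;H)$. Testing against a smooth cylindrical $\eta$, the left side becomes $\int_\O \partial_h\eta\,\varphi\,d\mu$ (the contribution over $G^{-1}(0,+\infty)$ vanishes because $\varphi_0=0$ there, and $G^{-1}(0)$ is $\mu$-null). Now I would apply the integration-by-parts formula \eqref{particlassica} of Proposition \ref{Prop:particlassica} — more precisely its directional version obtained by contracting \eqref{partiW1p} against $h$, i.e.\ the formula \eqref{partial_h} type identity applied to the product $\eta\,\varphi$ (valid by Lemma \ref{product} since $\eta$ is bounded with bounded derivatives, so $\eta\varphi \in W^{1,p}(\O,\mu)$). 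This gives
\begin{equation*}
\int_\O \partial_h(\eta\varphi)\,d\mu = \int_\O \hat h\,\eta\varphi\,d\mu + \int_{G^{-1}(0)} \frac{\partial_h G}{|D_HG|_H}\,\Tr(\eta\varphi)\,d\rho.
\end{equation*}
Since $\eta$ is continuous (a genuine version, being a smooth cylindrical function) its trace is its restriction; hence $\Tr(\eta\varphi) = \eta_{|G^{-1}(0)}\Tr\varphi = 0$ by hypothesis, so the surface integral vanishes. Expanding $\partial_h(\eta\varphi) = \partial_h\eta\,\varphi + \eta\,\partial_h\varphi$ via the Leibniz rule of Lemma \ref{product} and rearranging yields exactly the weak-derivative identity above with $\Psi = (D_H\varphi)_0$. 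Therefore $\varphi_0 \in W^{1,p}(X,\mu)$ with $D_H\varphi_0 = (D_H\varphi)_0$.

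The main obstacle I anticipate is the careful justification that $\Tr(\eta\varphi) = \eta\,\Tr\varphi$ and that the product rule and integration-by-parts formulas may be legitimately applied to $\eta\varphi$ — this requires checking the integrability exponents in Lemma \ref{product} and Proposition \ref{Prop:particlassica} (here $\eta \in W^{1,q}(\O,\mu)$ for all $q$, so $pq/(p+q) > 1$ is automatic) and confirming, as in the proof of Proposition \ref{Prop:particlassica}, that the trace of a product equals the product of traces when one factor is continuous and bounded. A secondary point is that the test functions in the weak-derivative definition are smooth cylindrical, so one should verify that \eqref{partial_h}, stated for Lipschitz or $W^{1,p}(X,\mu)$ functions, indeed applies; this is immediate since smooth cylindrical functions belong to $W^{1,p}(X,\mu)$ and their restrictions to $\O$ are Lipschitz, hence in $W^{1,p}(\O,\mu)$. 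Once these bookkeeping points are settled the argument closes cleanly.
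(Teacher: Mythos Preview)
Your proposal is correct and follows essentially the same route as the paper: for the direction $\Tr\varphi=0\Rightarrow\varphi_0\in W^{1,p}(X,\mu)$ you apply the integration-by-parts formula \eqref{partial_h} to the product of $\varphi$ with a smooth cylindrical test function and use the vanishing of the surface term, exactly as the paper does; for the converse you invoke Proposition~\ref{tracciapuntuale} to identify the trace of $\varphi_{0|\O}$ with that of $\varphi_{0|G^{-1}(0,+\infty)}\equiv 0$, which is again the paper's argument. Your additional care about $\Tr(\eta\varphi)=\eta\,\Tr\varphi$ and the integrability exponents is well placed and is handled in the paper via Proposition~\ref{Prop:particlassica}.
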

\begin{proof}
For every $\varphi\in W^{1,p}(\O, \mu)$ denote by $\varphi_0$ the null extension of $\varphi$: $\varphi_{0|\O} = \varphi$, 
$\varphi_{0|X\setminus \O} = 0$. 

Let $\varphi\in W^{1,p}(\O, \mu)$ have null trace at $G^{-1}(0)$. Then for every smooth cylindrical function $\psi:X\mapsto \R$ and for every $h\in H$, applying \eqref{partial_h} to the product $\psi \varphi$ we get 
$$ \int_{\O}\partial_h \psi \, \varphi\,d\mu 
= \int_{\O} (- \psi \partial_h \varphi + \psi \varphi \hat{h}) d\mu + \int_{G^{-1}(0)}\psi  \Tr \varphi \,\frac{\partial_h G}{|D_HG|_H}\,d\rho$$
so that, since the last integral vanishes, 
$$\int_X \partial_h \psi \,\varphi_0\,d\mu = \int_{\O}\partial_h \psi \, \varphi\,d\mu =
\int_{\O} (- \psi \partial_h \varphi+ \psi  \varphi \hat{h})d\mu 
= \int_{X} (- \psi (\partial_h \varphi)_0+ \psi  \varphi_0 \hat{h})d\mu .$$
Therefore, the function $(\partial_h \varphi)_0\in L^p(X, \mu)$ is the generalized partial derivative of $\varphi$ along $h$. Hence, the generalized derivative of $\varphi_0$, in the sense of \cite[Def. 5.2.9]{Boga} is  $ (D_H\varphi)_0$, namely the null extension of $D_H\varphi $ to $X$. 
By \cite[Cor. 5.4.7]{Boga}, $\varphi_0 \in W^{1,p}(X,  \mu)$. 

Let now $\varphi$ be such that $\varphi_0\in W^{1,p}(X,  \mu)$. We use an argument from \cite{DPL}: replacing $G$ by $-G$, and using \eqref{partitraccia} or \eqref{partitraccia2} with $\O$ replaced by $G^{-1}(0, \infty)$, we see that the trace at $G^{-1}(0)$ of $\varphi_{0|G^{-1}(0, +\infty)}$ vanishes. On the other hand, since $\varphi_0 \in W^{1,p}(X,  \mu)$, the traces at $G^{-1}(0)$ of  $ \varphi_{0| \O}$ and of $ \varphi_{0|G^{-1}(0, +\infty)}$ coincide by Proposition \ref{tracciapuntuale}. Since $ \varphi_{0|  \O}=\varphi$, 
then $\Tr \varphi =0$. 
\end{proof}

The space $\oo{W}^{1,2} (\O,\mu)$, consisting of   (classes of equivalence of) functions $\varphi:\O \mapsto \R$ whose null extension to the whole $X$ belongs to $W^{1,2}(X, \mu)$ was considered in  \cite{DPL,DPL2}. 
Proposition \ref{equivalenza} shows that under our assumptions such a space is just the kernel of the trace operator in 
$W^{1,2}(\O, \mu)$. 
 
\begin{Definition}
\label{W1p0}
For $p>1$ we set
$$\oo{W}^{1,p} (\O,\mu) = \{\varphi \in   W^{1,p}(\O, \mu):\;\Tr \varphi =0\} = \{ \varphi\in W^{1,p}(\O, \mu):\;\varphi_0\in W^{1,p}(X,  \mu)\}. $$
\end{Definition}

We end this section rewriting some consequences of the above results in terms of the papers \cite{Hino,Diego} (see also the related papers \cite{Diego0,FH}). 

The function $\one_{\O}$ is of bounded variation in the sense of  \cite{Hino,Diego}, since for every $h\in H$ and for every cylindrical $\varphi\in C^1_b(X)$ we have, by \eqref{partial_h},
$$\int_{\O} (\partial_h\varphi - \varphi \hat{h})d\mu = \int_{G^{-1}(0)} \varphi\frac{\partial_hG}{|D_HG|_H}\,d\rho ,$$
and the right hand side may be rewritten as $\int_X \varphi \,d\langle \nu, h\rangle$, where 
\begin{equation}
\label{nu}
d\nu := \one_{G^{-1}(0)} \frac{D_HG}{|D_HG|_H}\, d\rho
\end{equation}
is a $H$-valued measure with finite total variation. Therefore, the perimeter of $\O$ is finite and it coincides with $\rho(G^{-1}(0))$. More generally, on $ {G^{-1}(0)}$ the measure  $\rho$ coincides with  the perimeter measure   of \cite{Hino,Diego}, and $\sigma (x) =  D_HG(x)/|D_HG(x)|_H$ is the $H$-valued unit vector field in the polar decomposition of $\nu$.  
Since the coarea formula holds for the perimeter measure (\cite[Thm. 3.7]{Diego0}), arguing as in Lemma \ref{Cor:coarea} and Proposition \ref{W11} one can see that  $\sigma (x) =  D_HG(x)/|D_HG(x)|_H$ at the level sets $G^{-1}(\xi)$, for almost all $\xi\in (-\delta, \delta)$. The fact that it holds precisely for  $\xi =0$ does not follow directly from the above mentioned papers.

% The perimeter measure of any Borel set $B\subset X$ was proved to coincide with $\rho_{\mathcal V}(B\cap \partial^*_{\mathcal V}\O)$, for any basis  ${\mathcal V}$ of $H$. The ${\mathcal V}$-] boundary $\partial^*_{\mathcal V}\O$ is defined by 
%
% $$\partial^*_{\mathcal V}\O = \bigcup_{n\in \N} \bigcap_{m\geq n} \partial^*_{F_m}\O$$
%
% where $F_m$ is the subspace of $H$ spanned by $v_1$, \ldots, $v_m$, and

%%%%%%%%%%%%%%%%%%%%%%%%%%%%%%%%%%%%%%%%%%%%%%%%%%%%%%%%%%%
\section{Examples}
\label{Examples}
%%%%%%%%%%%%%%%%%%%%%%%%%%%%%%%%%%%%%%%%%%%%%%%%%%%%%%%%%%

\subsection{Halfspaces}
\label{half}

If $\O$ is a halfspace of the type $\{x\in X: \; \hat{h}(x) > 0\}$  for some $\hat{h}\in X^*$, we can characterize the set of the traces of the elements of  $W^{1,p}(\O, \mu)$ at the boundary $\partial \O= \{x\in X: \; \hat{h}(x) =0\}$. 

To understand what is going on, we recall briefly a classical result in the case where $X=\R^n$, the Gaussian measure is replaced by the Lebesgue measure $dx$, $\O = \{x\in \R^n:\; \langle e, x\rangle_{\R^n}>0\}$ and $e$ is a unit vector. 
Then, splitting $\R^n =$  span$\,e \oplus e^{\perp} = \R \oplus \R^{n-1}$ and identifying 
$\partial \O = \{0\} \times \R^{n-1}$ with $ \R^{n-1}$,  the space of the traces at $\partial \O $ of the elements of $W^{1,p}(\O)$ with $p>1$ is precisely the fractional Sobolev space $W^{1-1/p,p}(\R^{n-1})$. This result may be proved in several ways, and it is the first step to characterize the spaces of the traces of the Sobolev functions at the boundaries of other regular sets as fractional Sobolev spaces. 

The most popular proof uses interpolation, through the characterization of real interpolation spaces as trace spaces. Indeed,  for every couple of Banach spaces $E$, $F$ such that  $F\subset E$ with continuous embedding, and for each  $\theta \in (0, 1)$, $p\geq 1$, the real  interpolation space 
$(E, F)_{1-\theta, p}$ coincides with the set of the traces at  $t=0$ of the elements of $V(p, \theta, E,F)$ defined as the space of the functions 
$V\in W^{1,p}_{loc}((0, +\infty);E) \cap L^{p}_{loc}((0, +\infty);F)$ such that 
$$t\mapsto t^{\theta -1/p} V(t) \in L^{p} ((0, +\infty),  dt;F), \quad t\mapsto t^{\theta -1/p} V'(t) \in L^{p} ((0, +\infty), dt;E), $$
moreover the norm of $(E, F)_{1-\theta, p}$ is equivalent to 
$$ |a|_{\theta, p}  := \inf\bigg\{   \bigg(\ds \int_{0}^{+\infty} t ^{\theta p-1 } (\| V(t) \|_{F}^p +  \| V'(t) \|_{E}^p) \,dt\bigg)^{1/p} :\; V\in V(p, \theta,  E,F), \;V(0)=a\bigg\} .
$$
See e.g. \cite[\S 1.8.2]{Tr}. Then, taking $\theta= 1/p$, $E=L^p(\R^{n-1})$, $F= W^{1,p}(\R^{n-1})$  one checks that $V\in  V(p, 1/p, L^p(\R^{n-1}),W^{1,p}(\R^{n-1}))$  iff $(t,x)\mapsto V(t)(x)$ belongs to 
$W^{1,p}(\O)$, and this implies that the space of the traces at $\partial \O$ of the elements of $W^{1,p}(\O)$ coincides with the interpolation space $(L^p(\R^{n-1}), W^{1,p}(\R^{n-1}))_{1-1/p, p}$ that in its turn is known to coincide with $W^{1-1/p,p}(\R^{n-1})$. 

We shall follow this approach also for infinite dimensional halfspaces. 

Let  $\O$ be the halfspace  $\{x\in X: \; \hat{h}(x) > 0\}$  for some $\hat{h}\in X^*$. We set  $h := Q(\hat{h})$ and without loss of generality we assume that   $|h|_H=1$, so that $\hat{h}(h)=1$. 

Denoting by $\Pi_h(x) = \hat{h}(x)h$, as
in \S \ref{Surface measures} we split $X = \Pi_h(X) \oplus (I-\Pi_h)(X)$ $=$  span$\,h \oplus Y$,  
 where the linear span of $h$ plays the role of $F$ and $Y$ plays the role of $\widetilde{F}$. 
The image measure  $\mu \circ \Pi_h ^{-1}$ is the standard Gaussian measure in $\R$, identified with  span$\,h$, and the image measure $\mu \circ (I-\Pi _h)^{-1}$  is a centered nondegenerate Gaussian measure $\mu_Y$ in $Y$. The Cameron--Martin space is also decomposed as $H= $  span$\,h \oplus H_Y$,  where $H_Y$ is the orthogonal space to $h$ in $H$.  

The spaces $L^{p}(X, \mu)$ and  $W^{1,p}(X, \mu)$ are identified with $L^{p}(\R \times Y,N_{0,1}(dt) \otimes  \mu_Y)$ and $W^{1,p}(\R \times Y,N_{0,1}(dt) \otimes  \mu_Y)$, respectively.

 \begin{Proposition}
For $p>1$ the space of the traces on $\partial \O = \{x\in X: \; \hat{h}(x) = 0\}$ of the elements of $W^{1,p}(\O, \mu)$ coincides with the real  interpolation space $(L^{p}(Y,  \mu_Y), W^{1,p}(Y,  \mu_Y))_{1-1/p, p}$. 
\end{Proposition}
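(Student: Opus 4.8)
The plan is to transport the problem to the product $(0,\infty)\times Y$ and then apply the description of real interpolation spaces as trace spaces recalled above for the Euclidean halfspace. Set $E:=L^{p}(Y,\mu_Y)$ and $F:=W^{1,p}(Y,\mu_Y)$, so that $F\subset E$ with continuous embedding; by the identifications of the introduction, $\O\cong(0,\infty)\times Y$ (the ``$t$''--coordinate being $\hat h(x)$), $\partial\O\cong\{0\}\times Y$, the restriction of $\mu$ corresponds to $N_{0,1}(dt)\otimes\mu_Y$, and $\rho$ corresponds to $\mu_Y$. Moreover $G=\hat h$ belongs to $W^{k,q}(X,\mu)$ for all $q$ and $|D_HG|_H\equiv 1$, so Hypothesis \ref{HypG} holds and $W^{1,p}(\O,\mu)$ is well defined for every $p>1$. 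From $H=\mathrm{span}\,h\oplus H_Y$ one gets $|D_Hu|_H^{2}=|\partial_h u|^{2}+|D_{H_Y}u|_{H_Y}^{2}$, with $\partial_h$ differentiation in the $t$--variable, so that for $u(t,x)=V(t)(x)$ one has
\[
\|u\|_{W^{1,p}(\O,\mu)}^{p}\;\asymp\;\int_{0}^{\infty}\bigl(\|V(t)\|_{F}^{p}+\|V'(t)\|_{E}^{p}\bigr)\,N_{0,1}(dt).
\]
Write $\mathcal W$ for the Banach space of those $V$ (equivalently, of the functions $u$ on $(0,\infty)\times Y$) for which the right--hand side is finite.

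The first step --- which I expect to be the only genuinely delicate point --- is to prove that the space $W^{1,p}(\O,\mu)$ of Section \ref{Sobolev}, defined as the closure of $Lip(\O)$, coincides with $\mathcal W$ with equivalent norms. On $Lip(\O)$ the two norms are equivalent (by Fubini, slicing in $t$ and identifying the partial derivatives of a Lipschitz function), and $\mathcal W$ is complete, so it suffices to prove that $Lip(\O)$ is dense in $\mathcal W$. I would obtain this by even reflection: the map $Eu(t,x):=u(|t|,x)$ sends $\mathcal W$ boundedly into $W^{1,p}(\R,N_{0,1};E)\cap L^{p}(\R,N_{0,1};F)$ --- the weak $t$--derivative of $Eu$ being the odd reflection of $\partial_t u$, with no boundary contribution since the section $t\mapsto V(t)$ is continuous up to $0$ --- and sends $Lip(\O)$ into $Lip(\R\times Y)$; combining this with the standard tensorization $W^{1,p}(\R\times Y,\mu)=W^{1,p}(\R,N_{0,1};E)\cap L^{p}(\R,N_{0,1};F)$ and the density of smooth cylindrical functions in this space, approximating $Eu$ by cylindrical $f_n$ and restricting to $\O$ yields $f_n|_\O\in Lip(\O)$ with $f_n|_\O\to u$ in $\mathcal W$. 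Once this identification is available, for $u\in\mathcal W$ the section $t\mapsto V(t)=u(t,\cdot)$ has a representative in $C([0,\infty);E)$ (near $t=0$ the measure $N_{0,1}$ is comparable to Lebesgue measure, so $V\in W^{1,p}((0,1),dt;E)\hookrightarrow C([0,1];E)$), and, approximating $u$ by Lipschitz functions and comparing with Corollary \ref{MaggTraccia}, one sees that $V(0)=\Tr u$ in $L^{p}(\partial\O,\rho)=E$.

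It then remains to identify $\{\Tr u:u\in\mathcal W\}$ with $(E,F)_{1-1/p,p}$. Taking $\theta=1/p$ in the trace characterization recalled above, $(E,F)_{1-1/p,p}$ is exactly the set of values $V(0)$ with $V\in W^{1,p}((0,\infty),dt;E)\cap L^{p}((0,\infty),dt;F)$. For the inclusion ``$\supseteq$'', given such a $V$ with $V(0)=a$, the bound $e^{-t^{2}/2}\le1$ gives $V\in\mathcal W$ with $\Tr V=a$. For ``$\subseteq$'', given $u\in\mathcal W$ with $\Tr u=a$, I would multiply by a smooth cutoff $\chi$ with $\chi\equiv1$ near $0$ and $\mathrm{supp}\,\chi\subset[0,1)$: then $\chi V$ still has trace $a$, while on $[0,1]$ one has $e^{-t^{2}/2}\ge e^{-1/2}$ and $\|(\chi V)'\|_{E}\le|\chi'|\,\|V\|_{F}+\|V'\|_{E}$ (using $F\hookrightarrow E$), whence
\[
\int_{0}^{\infty}\bigl(\|\chi V\|_{F}^{p}+\|(\chi V)'\|_{E}^{p}\bigr)\,dt\;\le\;C\int_{0}^{1}\bigl(\|V\|_{F}^{p}+\|V'\|_{E}^{p}\bigr)\,N_{0,1}(dt)\;<\;\infty,
\]
so that $\chi V$ lies in the trace class of the interpolation theorem and $a=(\chi V)(0)\in(E,F)_{1-1/p,p}$. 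Both inclusions are bounded, so the open mapping theorem yields equivalence of the natural norms. The Gaussian weight $e^{-t^{2}/2}$ is harmless because it only matters near the boundary, where it is bounded above and below; all the real work is the identification of $W^{1,p}(\O,\mu)$ with $\mathcal W$ in the first step.
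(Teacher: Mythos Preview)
Your proof is correct and follows essentially the same route as the paper: the paper proves the result through two lemmas, one (Lemma~\ref{tracceGauss}) establishing via the same cutoff argument that the Gaussian weight $N_{0,1}(dt)$ yields the same trace space as the Lebesgue measure, and one (Lemma~\ref{Le:iso}) establishing the isomorphism $W^{1,p}(\O,\mu)\cong \mathcal W$ by approximation with cylindrical functions in one direction and even reflection together with the weak-derivative characterization of $W^{1,p}(X,\mu)$ in the other. Your organization merges these into a single density argument for $Lip(\O)$ in $\mathcal W$, but the ingredients---reflection, the tensorization $W^{1,p}(\R\times Y,\mu)=W^{1,p}(\R,N_{0,1};E)\cap L^{p}(\R,N_{0,1};F)$, and the cutoff near $t=0$---are the same.
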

\begin{proof} The proof is the same as the above  proof for the Lebesgue measure in finite dimensions through the following lemmas.  \end{proof}

\begin{Lemma}
\label{tracceGauss}
For every couple of Banach spaces $E$, $F$ such that  $F\subset E$ with continuous embedding, and for each  $\theta \in (0, 1)$, $p\geq 1$,  the real  interpolation space 
$(E, F)_{1-\theta, p}$ coincides with the set of the traces at  $t=0$ of the functions 
in $W(p, \theta, E,F)$ defined as the space of the functions 
$V\in W^{1,p}_{loc}((0, +\infty);E) \cap L^{p}_{loc}((0, +\infty);F)$ such that 
$$t\mapsto t^{\theta -1/p} V(t) \in L^{p} ((0, +\infty), N_{0,1}(dt);F), \quad t\mapsto t^{\theta -1/p} V'(t) \in L^{p} ((0, +\infty), N_{0,1}(dt);E). $$
Moreover the norm of $(E, F)_{\theta, p}$ is equivalent to 
$$ 
[a]_{\theta, p}  := \inf\bigg\{   \bigg(\ds \int_{0}^{+\infty} t ^{\theta p-1 } (\| V(t) \|_{F}^p+  \| V'(t) \|_{E}^p)N_{0,1}(dt)\bigg)^{1/p} :\; V\in W(p, \theta, E,F), \;V(0)=a\bigg\}. 
$$
\end{Lemma}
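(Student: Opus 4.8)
The plan is to reduce the statement to the classical characterization of real interpolation spaces via trace spaces, which is exactly the content of \cite[\S 1.8.2]{Tr}, and then to observe that the only difference here is the replacement of the Lebesgue measure $dt$ on $(0,+\infty)$ by the Gaussian weight $N_{0,1}(dt) = (2\pi)^{-1/2}e^{-t^2/2}\,dt$. The key point is that on the half-line $(0,+\infty)$, with the weight $t^{\theta p - 1}$ already present in the integrand, multiplying by the extra bounded factor $(2\pi)^{-1/2}e^{-t^2/2}$ changes nothing near $t=0$ (where the relevant singular behaviour of the weight lives) and only helps at $t\to+\infty$. So I would first recall the classical trace characterization with the measure $dt$, and then prove a two-sided comparison between the two norms $|a|_{\theta,p}$ (with $dt$, as in the excerpt's introduction to \S\ref{half}) and $[a]_{\theta,p}$ (with $N_{0,1}(dt)$).

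For the inequality $[a]_{\theta,p}\le C\,|a|_{\theta,p}$: given a competitor $V\in V(p,\theta,E,F)$ with $V(0)=a$ for the classical norm, I would like to use the same $V$ as a competitor for $[a]_{\theta,p}$. The integrand only decreases under insertion of $e^{-t^2/2}$, so $\int_0^\infty t^{\theta p-1}(\|V(t)\|_F^p+\|V'(t)\|_E^p)N_{0,1}(dt)\le (2\pi)^{-1/2}\int_0^\infty t^{\theta p-1}(\|V(t)\|_F^p+\|V'(t)\|_E^p)\,dt$, giving $[a]_{\theta,p}\le (2\pi)^{-1/(2p)}|a|_{\theta,p}$ directly. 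For the reverse inequality $|a|_{\theta,p}\le C\,[a]_{\theta,p}$: here I start from a competitor $V$ that is good for $[a]_{\theta,p}$ but need a competitor for $|a|_{\theta,p}$, i.e. one controlling the unweighted integral. The standard trick is to modify $V$ away from $0$: pick a smooth cutoff $\chi$ with $\chi\equiv 1$ on $[0,1]$ and $\chi\equiv 0$ on $[2,+\infty)$, and set $W(t):=\chi(t)V(t)$. Then $W(0)=a$, $W\in W^{1,p}_{loc}\cap L^p_{loc}$, and on $[0,1]$ one has $e^{-t^2/2}\ge e^{-1/2}$, so the unweighted integral of $t^{\theta p-1}(\|W\|_F^p+\|W'\|_E^p)$ over $[0,1]$ is controlled by a constant times the Gaussian-weighted integral of $V$; on $[1,2]$ one uses that $t^{\theta p-1}$ is bounded above and below and again $e^{-t^2/2}$ is bounded below, absorbing the $\chi'$ term; beyond $t=2$, $W\equiv 0$. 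This yields $|a|_{\theta,p}\le C_{\theta,p}\,[a]_{\theta,p}$ with a constant depending only on $\theta,p$ (and the fixed choice of $\chi$).

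Combining the two estimates, the weighted norm $[a]_{\theta,p}$ is equivalent to $|a|_{\theta,p}$, which by the cited classical result is equivalent to the norm of $(E,F)_{1-\theta,p}$; moreover the equivalence of norms shows that $a$ has a competitor in $W(p,\theta,E,F)$ with $W(0)=a$ precisely when it has one in $V(p,\theta,E,F)$, so the two trace spaces coincide as sets. I would also note that one should check $W(p,\theta,E,F)\ne\emptyset$ for $a\in(E,F)_{1-\theta,p}$ and that the ``trace at $t=0$'' is well-defined for elements of $W(p,\theta,E,F)$ — but this is automatic since $W^{1,p}_{loc}((0,+\infty);E)\hookrightarrow C([0,+\infty);E)$ near $0$ once one knows, from $t\mapsto t^{\theta-1/p}V'(t)\in L^p((0,1),dt;E)$ and $\theta p>0$, that $V'\in L^1((0,1);E)$, exactly as in the classical argument. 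The main obstacle, such as it is, is purely bookkeeping: making sure the cutoff construction does not destroy membership in $W(p,\theta,E,F)$ and that the $\chi'$ term is genuinely harmless because it is supported on $[1,2]$ away from the weight's singularity; there is no essential analytic difficulty beyond the classical case.
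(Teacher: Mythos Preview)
Your proposal is correct and follows essentially the same approach as the paper: one inclusion comes from $V(p,\theta,E,F)\subset W(p,\theta,E,F)$ since $e^{-t^2/2}\le 1$, and the reverse comes from multiplying by a smooth cutoff supported in $[0,2]$ and equal to $1$ on $[0,1]$, using that the Gaussian density is bounded below on $[0,2]$. Your write-up is in fact more detailed than the paper's (e.g.\ the remark on well-definedness of the trace at $0$), but the argument is the same.
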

\begin{proof} Since $V(p, \theta, E,F)\subset W(p, \theta, E,F)$ then $(E, F)_{1-\theta, p}$ is contained in the set of the traces at $0$ of the elements of $W(p, \theta, E,F)$, and $[a]_{\theta, p} \leq  |a|_{\theta, p}$, for each $a\in (E, F)_{1-\theta, p}$. Conversely, let $\eta \in C^{\infty}(\R)$ be such that $\eta \equiv 1$ in $[0, 1]$, $\eta \equiv 0$ in $[2, +\infty)$. For every $V\in W(p, \theta, E,F)$ set $\widetilde{V}(t) = \eta (t)V(t)$. Then $\widetilde{V}\in V(p, \theta, E,F)$, and 
$\|\widetilde{V}\|_{V(p, \theta, E,F)} \leq C\|V\|_{W(p, \theta, E,F)}$ with $C$ independent of $V$. Since $V$ and $W$ coincide a.e. on $(0, 1)$ they have the same trace at $0$, and the statement follows. 
\end{proof}

\begin{Lemma}
\label{Le:iso}
For every $v\in W^{1,p}(\O, \mu)$ set $V(t)(y) =  v(th + y)$, for $t\geq 0$ and $y\in Y$. Then the mapping 
$$ W^{1,p}(\O, \mu)\mapsto W(p, 1/p,  L^p(Y, \mu_Y),W^{1,p}(Y, \mu_Y)), 
\quad v\mapsto V $$
is an isomorphism. 
\end{Lemma}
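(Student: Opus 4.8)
The plan is to realize both spaces as concrete weighted Bochner spaces and to check that $v\mapsto V$ is a bounded bijection with bounded inverse. Under the splitting $X=\mathrm{span}\,h\oplus Y$ one has $\mu=N_{0,1}\otimes\mu_Y$, $\O=(0,+\infty)\times Y$, the functional $\hat h$ is the coordinate $t$ on $\mathrm{span}\,h$, $|h|_H=1$, and $H=\mathrm{span}\,h\oplus H_Y$ is an \emph{orthogonal} decomposition, so $|D_Hf|_H^2=|\partial_hf|^2+|D_{H_Y}f|_{H_Y}^2$. Since here $\theta=1/p$, the weight $t^{\theta-1/p}$ is identically $1$, so the norm of $W(p,1/p,L^p(Y,\mu_Y),W^{1,p}(Y,\mu_Y))$ is equivalent to $\bigl(\int_0^{+\infty}(\|V(t)\|_{W^{1,p}(Y,\mu_Y)}^p+\|V'(t)\|_{L^p(Y,\mu_Y)}^p)\,N_{0,1}(dt)\bigr)^{1/p}$. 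Throughout, $v\mapsto(t\mapsto v(th+\cdot))$ is the Fubini isometry $L^p(\O,\mu)\cong L^p((0,+\infty),N_{0,1};L^p(Y,\mu_Y))$, which is what makes $V$ well defined for every $v\in W^{1,p}(\O,\mu)$.

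First I would prove boundedness of $v\mapsto V$. For $v$ the restriction to $\O$ of a smooth cylindrical function on $X$ one computes directly $V'(t)(y)=\partial_hv(th+y)$ and $(D_{H_Y}V(t))(y)=(D_{H_Y}v)(th+y)$; together with the orthogonal splitting of $|D_Hv|_H$ above and Fubini this gives that $\|v\|_{W^{1,p}(\O,\mu)}$ is equivalent to $\|V\|_{W(p,1/p,L^p(Y,\mu_Y),W^{1,p}(Y,\mu_Y))}$ with constants depending only on $p$. Such $v$ are dense in $W^{1,p}(\O,\mu)$ by Proposition \ref{approx}, so for general $v$ one takes an approximating sequence $(v_n)$: the $(V_n)$ are then Cauchy in $W(p,1/p,\dots)$, their limit equals $V$ because $V_n\to V$ already in $L^p((0,+\infty),N_{0,1};L^p(Y,\mu_Y))$ and $W(p,1/p,\dots)$ embeds continuously into that space. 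This yields both the local regularity of $V$ required in the definition of $W(p,1/p,\dots)$ and the estimate $\|V\|_{W(p,1/p,\dots)}\le C\|v\|_{W^{1,p}(\O,\mu)}$.

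For surjectivity with a bounded inverse I would use even reflection. Given $V\in W(p,1/p,L^p(Y,\mu_Y),W^{1,p}(Y,\mu_Y))$, define $\tilde v(th+y):=V(|t|)(y)$ for $t\in\R$, $y\in Y$, and claim $\tilde v\in W^{1,p}(X,\mu)$. By the weak-derivative characterization of $W^{1,p}(X,\mu)$ (\cite[Cor. 5.4.7]{Boga}) it suffices to exhibit its generalized gradient, the natural candidate being $\Psi(th+y):=\sign(t)\,V'(|t|)(y)\,h+(D_{H_Y}V(|t|))(y)$, which lies in $L^p(X,\mu;H)$ because reflection merely doubles the finite integrals over $\O$. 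Checking that $\langle\Psi,h\rangle_H$ is the weak derivative of $\tilde v$ along $h$ reduces, after integrating out $y$, to the one-dimensional fact that the even reflection of a $W^{1,1}_{\mathrm{loc}}(0,+\infty)$ function lies in $W^{1,1}_{\mathrm{loc}}(\R)$ with derivative $\sign(t)$ times the reflected derivative, combined with integration by parts against the Gaussian weight $N_{0,1}(dt)$, whose logarithmic derivative $-t$ produces exactly the $\hat h$-term in the definition of weak derivative; checking the weak derivatives along an orthonormal basis of $H_Y$ reduces, for fixed $t$, to the corresponding weak derivative on $(Y,\mu_Y)$ followed by an integration in $t$. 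Here one uses the standard Bochner--Sobolev fact that $V\in W^{1,p}_{\mathrm{loc}}((0,+\infty);L^p(Y,\mu_Y))$ forces $t\mapsto V(t)(y)$ to be in $W^{1,1}_{\mathrm{loc}}(0,+\infty)$ with derivative $V'(t)(y)$ for $\mu_Y$-a.e. $y$. Then $v:=\tilde v|_{\O}\in W^{1,p}(\O,\mu)$, being a restriction of an element of $W^{1,p}(X,\mu)$, with $\|v\|_{W^{1,p}(\O,\mu)}\le\|\tilde v\|_{W^{1,p}(X,\mu)}\le C'\|V\|_{W(p,1/p,\dots)}$, and its associated function is $t\mapsto v(th+\cdot)=V(t)$ on $(0,+\infty)$. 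Combined with injectivity, which is immediate, the map is an isomorphism.

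I expect the main obstacle to be the surjectivity step, namely the verification that the reflected function $\tilde v$ genuinely has the claimed weak $H$-derivative: this requires combining the one-dimensional reflection lemma, the Gaussian integration by parts that generates the $\hat h$-term, and the passage between the Bochner description of $V$ and the pointwise-in-$(t,y)$ description of $\tilde v$, all while staying inside $L^p$. By comparison the forward boundedness is a routine density argument once the coordinate identifications are in place, and injectivity is trivial.
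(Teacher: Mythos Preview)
Your proposal is correct and follows essentially the same approach as the paper: forward boundedness via the density of smooth cylindrical functions (Proposition~\ref{approx}) and a Cauchy-sequence argument, and surjectivity via even reflection $\widetilde V(t)=V(|t|)$ together with the weak-derivative characterization of $W^{1,p}(X,\mu)$ from \cite[Cor.~5.4.7]{Boga}. The only cosmetic difference is that for the $h$-derivative the paper tests against $\varphi$ by showing $t\mapsto e^{-t^2/2}\langle\varphi(t,\cdot),\widetilde V(t)\rangle_{L^{p'},L^p}$ is weakly differentiable on $\R$, whereas you invoke the a.e.-in-$y$ one-dimensional reflection lemma; both routes yield the same identity.
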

\begin{proof}
By Lemma \ref{tracceGauss}, for $\theta =1/p$ the space $W(p, 1/p,  L^p(Y, \mu_Y),W^{1,p}(Y, \mu_Y))$ coincides with 
$L^p((0, +\infty),  N_{0,1}(dt);W^{1,p}(Y, \mu_Y))$ $\cap$ $W^{1,p}  ((0, +\infty), N_{0,1}(dt);L^p(Y, \mu_Y))$, and we have
\begin{equation}
\label{norma}
\|V\|_{W(p, 1/p,  L^p(Y, \mu_Y),W^{1,p}(Y, \mu_Y))}^p = \int_0^{+\infty} ( \|V(t)\|^p_{W^{1,p}(Y, \mu_Y)} + \|V'(t)\|^p_{L^{p}(Y, \mu_Y)} )N_{0,1}(dt). 
\end{equation} 
Let $v\in  W^{1,p}(\O, \mu)$. Then for every $x\in \O$, $x = th + y$ with $t>0$ and $y\in Y$ we have $D_Hv(th+y) = h \, \partial/ \partial h\, v(th +y)  + D_{H_Y}v(th +y)$. Moreover  for every $t>0$, $V(t)\in W^{1,p}(Y, \mu_Y)$, and  $D_{H_Y}V(t)(y) =  D_{H}^Yv(th +y)$. To show that $V\in W(p, 1/p,  L^p(Y, \mu_Y),W^{1,p}(Y, \mu_Y))$ (in particular, to show that it is measurable with values in $W^{1,p}(Y, \mu_Y)$) we approach $v$ by the restrictions to $\O$ of a sequence of smooth cylindrical functions $v_n$ (see Proposition \ref{approx}). 
The corresponding functions $V_n$ defined by $V_n(t)(y) =  v_n(th + y)$ belong to 
$C^1_b([0, +\infty); C^1_b(Y))$ $\subset $ $W(p, 1/p,  L^p(Y, \mu_Y),W^{1,p}(Y, \mu_Y))$, and $V_n'(t)(y) = \partial v_n/\partial h(th+y)$. Without loss of generality we may assume that $(v_n)$ and $(D_Hv_n)$  converge pointwise to $v$ and to $D_Hv$ a.e. in $\O$, respectively. 
Moreover $(V_n)$ is a Cauchy sequence in $W(p, 1/p,  L^p(Y, \mu_Y),W^{1,p}(Y, \mu_Y))$, since by \eqref{norma}
we have
$$\begin{array}{l}
\ds \|V_n-V_m\|_{W(p, 1/p,  L^p(Y, \mu_Y),W^{1,p}(Y, \mu_Y))}^p 
\\
\\
\ds = \int_0^{+\infty} \bigg( \int_Y (|(v_n-v_m)(th+y)|^p + |(D_{H}^{Y}v_n - D_{H}^{Y}v_m)(th +y)|_{H}^p) d\mu_Y \bigg) N_{0,1}(dt)
\\
\\ 
+\ds
\int_0^{+\infty}  \bigg( \int_Y    |\frac{\partial}{\partial h} v_n(th +y)- \frac{\partial}{\partial h} v_m(th +y)|^p 
d\mu_Y \bigg) N_{0,1}(dt)
\\
\\
\ds \leq \int_{\O} (|(v_n-v_m)(x)|^p + |(D_{H} v_n-D_{H} v_m) (x)|_{H}^p  ) d\mu . 
\end{array}$$
Therefore, the pointwise limit function $V$ belongs to  
$W(p, 1/p, L^p(Y, \mu_Y),W^{1,p}(Y, \mu_Y))$ and 
$$\|V\|_{W(p, 1/p,  L^p(Y, \mu_Y),W^{1,p}(Y, \mu_Y))} \leq  \|v\|_{W^{1,p}(\O, \mu)}. $$
(The norms are  equal  for $p=2$).

Conversely, let $V\in W(p, 1/p,  L^p(Y, \mu_Y),W^{1,p}(Y, \mu_Y))$ $=$  $L^p((0, +\infty),  N_{0,1}(dt);W^{1,p}(Y, \mu_Y))$ $\cap$ $W^{1,p}  ((0, +\infty), N_{0,1}(dt);L^p(Y, \mu_Y))$. We extend $V$ to the whole $\R $ by reflection, setting
$$\widetilde{V}(t) = V(t), \;t\geq 0; \quad \widetilde{V}(t) = V(-t), \;t<0.$$
The extension $\widetilde{V}$ belongs to $L^p(\R,  N_{0,1}(dt);W^{1,p}(Y, \mu_Y))$ $\cap$ $W^{1,p}  (\R, N_{0,1}(dt);L^p(Y, \mu_Y))$, and   $\widetilde{V}'(t) = V'(t)$ for a.e. $t>0$, $\widetilde{V}'(t) = -V'(-t)$ for a.e. $t<0$. 

Recalling the identification of $W^{1,p}(X, \mu)$ with 
$W^{1,p}(\R \times Y,N_{0,1}(dt) \otimes  \mu_Y)$, our aim is to show that the function $v$ defined by 
$$v(t,y) = V(t)(y), \quad t\in \R, \;y\in Y$$
belongs to $W^{1,p}(\R \times Y,N_{0,1}(dt) \otimes  \mu_Y)$, and that $\|v\|_{W^{1,p}}\leq C\|V\|_{W(p, 1/p,  L^p(Y, \mu_Y),W^{1,p}(Y, \mu_Y))}$. Indeed in this case, the restriction of $v$ to $(0, +\infty)\times Y$ belongs to the space 
$W^{1,p}((0, +\infty)\times Y,$ $N_{0,1}(dt) \otimes  \mu_Y)$, with the same estimate of its norm. 

By a tedious but standard procedure, namely approaching $V$ by a sequence of measurable simple functions 
$V_n(t) = \sum_{k=1}^{k_n} \one_{A_{k,n}}(t) \varphi_{k,n}$ in $L^p((0, +\infty); N_{0,1}(dt)); L^p(Y, \mu_Y))$ and then 
approaching each $ \varphi_{k,n}$ by simple $\mu_Y$-measurable functions with real values in $L^p(Y, \mu_Y)$, one can see that $v$ is measurable in $\R \times Y$ with respect to $N_{0,1}(dt) \otimes  \mu_Y$. Integrating we get 
$$\int_ {\R} \int_Y |v(t,y)|^p  \mu_Y(dy) \,N_{0,1}(dt) = \int_{\R}\|\widetilde{V}(t )\|^p_{L^p(Y, \mu_Y)}   \,N_{0,1}(dt),$$
so that $v\in L^p(\R \times Y,N_{0,1}(dt) \otimes  \mu_Y)$. To prove that it belongs to $W^{1,p}(\R \times Y,N_{0,1}(dt) \otimes  \mu_Y)$, let us remark that for every smooth cylindrical function $\varphi: \R \times Y\mapsto \R$ the function $\theta :\R\mapsto \R$ defined by
$$\theta (t) = e^{-t^2/2} \int_Y \varphi_t(t,y)\widetilde{V}(t )(y)\,  \mu_Y(dy) =  e^{-t^2/2} \langle \varphi_t(t,\cdot )\widetilde{V}\rangle _{L^{p'}(Y, \mu_Y), L^{p}(Y, \mu_Y)}$$
is weakly differentiable in $\R$, with weak derivative 
$$\theta'(t) = -t \theta(t) + e^{-t^2/2}\int_Y (\varphi_t(t,y)\widetilde{V}(t )(y)+ \varphi(t,y)\widetilde{V}'(t )(y)) \mu_Y(dy) .$$
Integrating over $\R$ we get 
$$\int_{\R}\int_Y \varphi_t(t,y) v(t,y)\,  \mu_Y(dy) \,N_{0,1}(dt) = \int_{\R}\int_Y\varphi_t(t,y)\widetilde{V}(t )(y)\,  \mu_Y(dy) \,N_{0,1}(dt) $$
$$ =  -\int_{\R} \int_Y \varphi(t,y)\widetilde{V}'(t )(y)\,  \mu_Y(dy) \,N_{0,1}(dt) + \int_{\R} \int_Y t\varphi(t,y)\widetilde{V}(t )(y)\,  \mu_Y(dy) \,N_{0,1}(dt),  $$
namely the function $(t,y) \mapsto  \widetilde{V}'(t )(y)$ is the weak derivative of $v$ in the direction of $(1,0)$ in the sense of 
Sect. \ref{Notation and preliminaries}. It  belongs to $L^p(\R \times Y,N_{0,1}(dt) \otimes  \mu_Y)$ since, as before, 
$$\int_ {\R} \int_Y |v_t(t,y)|^p  \mu_Y(dy) \,N_{0,1}(dt) = \int_{\R}\|\widetilde{V}'(t )\|^p_{L^p(Y, \mu_Y)}   \,N_{0,1}(dt).$$
Moreover, $\widetilde{V} (t )\in W^{1,p}(Y, \mu_Y)$ for a.e. $t\in \R$, and therefore for every $k\in H\cap Y$ and for each smooth cylindrical function $\varphi$ we have
$$\int_Y \varphi_k(t,y)\widetilde{V} (t )(y)\,\mu_Y(dy) =$$
$$= -\int_Y \varphi(t,y)   \langle D_{H_Y}\widetilde{V}(t )(y), k\rangle_H\,\mu_Y(dy) + \int_Y \hat{k}(y)\varphi(t,y)   \varphi_k(t,y)\widetilde{V} (t )(y)\,\mu_Y(dy). $$
Integrating over $\R$ with respect to $N_{0,1}(dt)$, we obtain that 
$v$ is weakly differentiable in any direction $(0,k)  \in H\cap Y$, with $\partial v/\partial k(t,y) = \langle D_{H_Y}\widetilde{V}(t )(y), k\rangle_H$. Hence, $v$ is weakly differentiable in any direction $k\in H$, and the weak gradient at any $(t,y)$ is given by $(\widetilde{V}'(t )(y),  D_{H_Y}\widetilde{V}(t )(y))$. 
By \cite[\S 5.2, \S 5.4]{Boga}, $v\in W^{1,p}(\R \times Y,N_{0,1}(dt) \otimes  \mu_Y)$, the weak gradient at $(t,y)$ coincides with $D_Hv(t,y)$, and 
$$\int_ {\R} \int_Y|D_Hv(t,y)|_H^p  \mu_Y(dy) \,N_{0,1}(dt) \leq 2^{p-1}\int_{\R}\int_Y(|\widetilde{V}'(t )(y)|^p  + |D_{H_Y}\widetilde{V}(t )(y)|_H^p ) \mu_Y(dy) \,N_{0,1}(dt) = $$
$$= 2^{p-1}(\|\widetilde{V}'\|_{L^p(\R , N_{0,1}(dt); L^p(Y, \mu_Y)) }^p+ \|\widetilde{V}\|_{L^p(\R , N_{0,1}(dt);  W^{1,p}(Y, \mu_Y)) }^p).$$
\end{proof}

Let us set 
$$T_p := (L^{p}(Y,  \mu_Y), W^{1,p}(Y,  \mu_Y))_{1-1/p, p}. $$
The trace space $T_p$ may be characterized using   the  Ornstein--Uhlenbeck operator on $Y$, $L_Y: W^{2,p}(Y, \mu_Y)\mapsto L^{p}(Y, \mu_Y)$. It is defined by 
$$L_Yu = \di(D_{H_Y}u), $$
where $\di$ denotes the Gaussian divergence with respect to $\mu_Y$ (e.g., \cite[\S 5.8]{Boga}). 
By the Meyer inequalities (\cite[\S 5.6, 5.7]{Boga}),  $W^{1,p}(Y,  \mu_Y)$ is the domain of $(I-L_Y)^{1/2}$. Using  the Reiteration Theorem (\cite[\S 1.10.2]{Tr}) yields
$$
T_p= (L^{p}(Y,  \mu_Y), D(I-L_Y))_{1/2-1/2p, p}= (L^{p}(Y,  \mu_Y), W^{2,p}(Y,  \mu_Y))_{1/2-1/2p, p}.$$
Since $L_Y$ is a $m$-dissipative operator that generates an analytic semigroup, all the classical characterizations of the real interpolation spaces between its domain and the underlying space $L^{p}(Y, \mu_Y)$ hold, see e.g. \cite[\S 1.13, 1.14]{Tr}.  In particular, $T_p$ is the set of all $f\in L^{p}(Y,  \mu_Y)$
such that 
\begin{equation}
\label{interp1}
\int_0^{\infty}t^{-(p+1)/2 }\|e^{tL_Y}f-f\|_{L^{p}(Y,  \mu_Y)}^pdt <\infty ,
\end{equation}
or, equivalently, such that 
\begin{equation}
\label{interp2}
\int_0^{\infty}t^{ (p-1)/2 }\|L_Ye^{tL_Y}f\|_{L^{p}(Y,  \mu_Y)}^pdt <\infty ,
\end{equation}
or, equivalently, such that 
\begin{equation}
\label{interp3}
\int_0^{\infty}\lambda ^{ (p-3)/2 }\|L_Y(\lambda I- L_Y)^{-1}f\|_{L^{p}(Y,  \mu_Y)}^p d\lambda  <\infty .
\end{equation}
For $p=2$, $L_Y$ is self-adjoint in $L^{2}(Y,  \mu_Y)$ and we have in addition the characterization through the spectral decomposition, 
\begin{equation}
\label{interp4}
(L^{2}(Y,  \mu_Y), W^{1,2}(Y,  \mu_Y) )_{1/2, 2}
= D((-L_Y)^{1/4})  = \{ f\in L^{2}(Y,  \mu_Y): \; \sum_{k=1}^{\infty}  k^{1/2}\|I_k(f)\|^2 <\infty\},
\end{equation}
where  $I_k(f)$ is the orthogonal projection on the subspace of $L^{2}(Y,  \mu_Y)$ generated by the Hermite polynomials of order $k$. See \cite[p. 78, p. 215]{Boga}. They are the Hermite polynomials  $P$ in $Y$ such that $ L_YP = -k P$.

\vspace{3mm}

The integration formulae of  Section \ref{sect:traces} are particularly simple in this case. Indeed, $G(x) = -\hat{h}(x)$, $D_HG(x) = -h$ (constant), so that $|D_HG(x)|_H =1$, $LG  =  \sum_{j} \langle v_j, h\rangle_H\hat{v_j}(x)$  so that  for each $k$ \eqref{parti} becomes
\begin{equation}
\label{partisemispazio}
\int_{\O} D_k\varphi \,d\mu =   \int_{\O} \hat{v_k}\varphi \,d\mu -  \langle v_k, h\rangle_H \int_{Y}  \varphi \,d \mu_Y ,  
\end{equation}
and formulae \eqref{partitraccia}, \eqref{partitraccia2} become
\begin{equation}
\label{tracciasemispazio}
\int_{Y} | \varphi|^p   \,d \mu_Y  =  -  p  \int_{\O}  | \varphi|^{p-2}\varphi \, \langle   h, D_H\varphi \rangle \,d\mu +  \int_{\O} \sum_{j}  \langle v_j, h\rangle_H\hat{v_j}(x)\,| \varphi|^p \,d\mu . 
\end{equation}
Note that  if  $\langle v_k, h\rangle_H = \hat{h}(v_k)= 0$, then $v_k\in \partial \O$, $D_k$ may be considered as a tangential derivative, and formula \eqref{partisemispazio} gets similar  to the standard integration formula \eqref{partiX}. 

\vspace{3mm}

Using the Ornstein--Uhlenbeck semigroup and the characterization  \eqref{interp2} it is possible to define a nice extension operator ${\mathcal E }: T_p\mapsto W^{1,p}(\O, \mu)$, just setting
 $$({\mathcal E }f)(th+y) := (e^{t^2L_Y}f)(y), \quad t>0, \;y\in Y.$$

\begin{Proposition}
The operator ${\mathcal E }$ is bounded from $T_p$ to $W^{1,p}(\O, \mu)$. 
\end{Proposition}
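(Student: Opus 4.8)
The plan is to show directly that $\mathcal E f$ lies in $W^{1,p}(\mathcal O,\mu)$ with norm controlled by $\|f\|_{T_p}$, by using the isomorphism of Lemma~\ref{Le:iso} together with the interpolation characterization of $T_p$. Since $W^{1,p}(\mathcal O,\mu)$ is isomorphic (via $v\mapsto V$, $V(t)(y)=v(th+y)$) to the space $W(p,1/p,L^p(Y,\mu_Y),W^{1,p}(Y,\mu_Y))=L^p((0,+\infty),N_{0,1}(dt);W^{1,p}(Y,\mu_Y))\cap W^{1,p}((0,+\infty),N_{0,1}(dt);L^p(Y,\mu_Y))$, it suffices to prove that the map $t\mapsto V(t):=e^{t^2L_Y}f$ belongs to this latter space, with norm bounded by a constant times $\|f\|_{T_p}$. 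Thus the whole proof reduces to estimating, for $f\in T_p$,
\begin{equation}
\label{eq:goalextension}
\int_0^{+\infty}\bigl(\|e^{t^2L_Y}f\|_{W^{1,p}(Y,\mu_Y)}^p+\|\tfrac{d}{dt}e^{t^2L_Y}f\|_{L^p(Y,\mu_Y)}^p\bigr)N_{0,1}(dt)<\infty .
\end{equation}

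First I would treat the derivative term. By the chain rule $\frac{d}{dt}e^{t^2L_Y}f=2t\,L_Ye^{t^2L_Y}f$, so the contribution of this term to \eqref{eq:goalextension} is a constant multiple of $\int_0^{\infty}t^p\|L_Ye^{t^2L_Y}f\|_{L^p}^p e^{-t^2/2}\,dt$. Dropping the harmless Gaussian weight (it is $\le 1$) and substituting $s=t^2$, $ds=2t\,dt$, this becomes a constant times $\int_0^{\infty}s^{(p-1)/2}\|L_Ye^{sL_Y}f\|_{L^p}^p\,ds$, which is finite precisely because of characterization \eqref{interp2} of $T_p$; this also gives the bound by $\|f\|_{T_p}^p$. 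Next I would treat the $W^{1,p}(Y,\mu_Y)$-norm term. Split $\|e^{t^2L_Y}f\|_{W^{1,p}}^p\le C(\|e^{t^2L_Y}f\|_{L^p}^p+\|D_{H_Y}e^{t^2L_Y}f\|_{L^p(Y,\mu_Y;H_Y)}^p)$. The first piece is bounded by $\|f\|_{L^p}^p$ uniformly in $t$ (contractivity of the Ornstein--Uhlenbeck semigroup), and $\int_0^{\infty}\|f\|_{L^p}^p N_{0,1}(dt)=\|f\|_{L^p}^p<\infty$. For the gradient piece I would use the fact that $W^{1,p}(Y,\mu_Y)=D((I-L_Y)^{1/2})$ (Meyer inequalities), so that $\|D_{H_Y}e^{t^2L_Y}f\|_{L^p}\le C\|(I-L_Y)^{1/2}e^{t^2L_Y}f\|_{L^p}$, and then use the standard analytic-semigroup estimate $\|(I-L_Y)^{1/2}e^{sL_Y}g\|_{L^p}\le C s^{-1/2}\|g\|_{L^p}+\|(I-L_Y)^{1/2}g\|_{L^p}$ for $g$ in the domain, combined with the reiteration identity $T_p=(L^p(Y,\mu_Y),D(I-L_Y))_{1/2-1/2p,p}$. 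Writing $s=t^2$ as before, the resulting integral $\int_0^\infty s^{(p-1)/2}\,s^{-p/2}\|\cdots\|^p\,ds$-type expression is exactly of the form that the interpolation norm of $T_p$ (via the characterization with $L_Y(\lambda I-L_Y)^{-1}$ or $L_Ye^{sL_Y}$, cf.\ \eqref{interp2}--\eqref{interp3}) controls; here one uses that $f\in T_p$ means precisely that the $K$-functional (equivalently the quantities in \eqref{interp1}--\eqref{interp3}) have the right integrability.

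Once \eqref{eq:goalextension} is established, the isomorphism of Lemma~\ref{Le:iso} immediately gives that the function $v(th+y)=(e^{t^2L_Y}f)(y)$ belongs to $W^{1,p}(\mathcal O,\mu)$ with $\|\mathcal E f\|_{W^{1,p}(\mathcal O,\mu)}\le C\|f\|_{T_p}$, which is the assertion. Boundedness (linearity being obvious from linearity of the semigroup) then follows. As a sanity check one should also verify that $\mathcal E$ is genuinely an extension, i.e.\ $\Tr(\mathcal E f)=f$: since $e^{t^2L_Y}f\to f$ in $L^p(Y,\mu_Y)$ as $t\to 0^+$ and the trace operator of Section~\ref{sect:traces} is continuous from $W^{1,p}(\mathcal O,\mu)$ to $L^q(G^{-1}(0),\rho)$, taking $t\to 0$ identifies the trace with $f$ under the identification $G^{-1}(0)=Y$, $\rho=\mu_Y$; but strictly this is not needed for the stated boundedness claim.

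The main obstacle I anticipate is the careful handling of the $W^{1,p}(Y,\mu_Y)$-norm term near $t=0$: one needs the right interplay between the smoothing estimate $\|(I-L_Y)^{1/2}e^{sL_Y}\|_{\mathcal L(L^p)}\lesssim s^{-1/2}$ and the $T_p$-membership of $f$, so that the singularity $s^{-p/2}$ is integrable against $s^{(p-1)/2}\,ds$ only after exploiting that $f$ already has ``half a derivative'' in the interpolation sense. Concretely, one cannot simply bound $\|D_{H_Y}e^{t^2L_Y}f\|_{L^p}$ by $t^{-1}\|f\|_{L^p}$ and integrate — that diverges — so the argument must instead route through one of the equivalent characterizations \eqref{interp1}--\eqref{interp3} (or the $K$-/$J$-functional description) of $T_p$, writing $f$'s gradient flow in terms of $L_Ye^{sL_Y}f$ and invoking \eqref{interp2}. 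The derivative term, by contrast, is essentially a direct translation of \eqref{interp2} after the substitution $s=t^2$, and should be routine.
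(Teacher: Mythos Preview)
Your proposal is correct and follows essentially the same route as the paper: reduce via Lemma~\ref{Le:iso} to showing $V(t)=e^{t^2L_Y}f$ lies in $W(p,1/p,L^p(Y,\mu_Y),W^{1,p}(Y,\mu_Y))$, handle the $L^p$-part by contractivity, the derivative $V'(t)=2tL_Ye^{t^2L_Y}f$ by the substitution $s=t^2$ and characterization~\eqref{interp2}, and treat the $W^{1,p}$-part using $W^{1,p}(Y,\mu_Y)=D((I-L_Y)^{1/2})$ together with $T_p=(L^p,D(I-L_Y))_{1/2-1/2p,p}$. The only difference is that the paper packages the delicate step you flag---controlling $\int_0^\infty s^{-1/2}\|(I-L_Y)^{1/2}e^{sL_Y}f\|_{L^p}^p\,ds$---into a separate abstract lemma (for a positive sectorial $A$, $t\mapsto t^{\alpha-\theta}\|A^\alpha e^{tA}x\|\in L^p(dt/t)$ whenever $x\in (X,D(A))_{\theta,p}$ and $0<\theta<\alpha$), proved via the interpolation inequality $\|A^\alpha y\|\le C\|y\|^{1-\alpha}\|Ay\|^\alpha$ and a Hardy--Young inequality, applied with $\alpha=1/2$, $\theta=1/2-1/2p$.
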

\begin{proof}
By Lemma \ref{Le:iso} it is sufficient to prove that for every $f\in T_p$, the function $V(t) = e^{t^2L_Y}f$ belongs to the space $W(p, 1/p,  L^p(Y, \mu_Y),W^{1,p}(Y, \mu_Y))$ $=$
$L^p((0, +\infty),  N_{0,1}(dt);W^{1,p}(Y, \mu_Y))$ $\cap$ $W^{1,p}  ((0, +\infty), N_{0,1}(dt);L^p(Y, \mu_Y))$, with norm estimated by $c\|f\|_{T_p}$, $c$ independent of $f$. 

Since $\|e^{t^2L_Y}f\|_{L^p(Y, \mu_Y)} \leq \|f\|_{L^p(Y, \mu_Y)}$ for every $t > 0$, then $V\in L^{p}  ((0, +\infty), N_{0,1}(dt);$ $ L^p(Y, \mu_Y))$. Moreover,  
$V\in C^{\infty}((0, +\infty);L^p(Y, \mu_Y))$ and $V'(t) = 2t L_Ye^{t^2L_Y}f$, hence estimate \eqref{interp2} shows that $V'\in L^p((0, +\infty), N(0,1)(dt); L^p(Y, \mu_Y))$ through the obvious change of variable $t^2=s$. 

To show that $V\in L^p((0, +\infty), N(0,1)(dt); W^{1,p}(Y, \mu_Y))$ through the same change of variables we need to know that $s\mapsto s^{-1/2p}\|e^{sL^Y}f\|_{W^{1,p}(Y, \mu_Y)}$ belongs to $L^p((0, +\infty), e^{-s/2p}ds)$. Since $f\in (L^{p}(Y,  \mu_Y), D(I-L_Y))_{1/2-1/2p, p}$, this is a consequence of  the equivalence $W^{1,p}(Y, \mu_Y)$ $ = D((I-L_Y)^{1/2})$, through next lemma applied with $\theta = 1/2 - 1/2p$, $\alpha = 1/2$. 
\end{proof}
 
 \begin{Lemma}
 Let $X$ be a Banach space and let $A:D(A)\subset X\mapsto X$ be a linear positive operator, generator of an analytic semigroup $e^{tA}$. For every $0<\theta <\alpha $ and  $x\in (X, D(A))_{\theta, p}$, the function 
 $t\mapsto t^{\alpha -\theta }\|A^{\alpha}e^{tA}x \| $ belongs to $L^p((0, +\infty); dt/t)$, and its $L^p$ norm is bounded by $c\|x\|_{(X, D(A))_{\theta, p}}^p$, for some $c$ independent of $x$. 
 \end{Lemma}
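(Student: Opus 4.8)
The plan is to obtain the estimate from the classical description of $(X,D(A))_{\theta,p}$ in terms of the semigroup, and to convert the integer-order version of that description into the fractional-order one by smearing it against the Balakrishnan kernel of a negative power of $A$. Throughout I would use, without further comment, that $A$ positive implies $\|e^{tA}\|\le M$, $\|A^m e^{tA}\|\le C_m t^{-m}$ for each $m\in\N$ and $t>0$, that $A^{-\beta}$ is a bounded operator for every $\beta>0$ with $A^{-\beta}=\frac{1}{\Gamma(\beta)}\int_0^{+\infty}\sigma^{\beta-1}e^{\sigma A}\,d\sigma$, and that for $\theta\in(0,1)$ and any integer $m>\theta$ the norm $\|x\|+\big(\int_0^{+\infty}(s^{m-\theta}\|A^m e^{sA}x\|)^p\,\frac{ds}{s}\big)^{1/p}$ is equivalent to $\|x\|_{(X,D(A))_{\theta,p}}$; all of this is part of the classical theory, e.g. \cite[\S1.13,1.14,1.15]{Tr}.

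First I would fix an integer $m>\alpha$ (hence $m>\theta$) and set $g(s):=s^{m-\theta}\|A^m e^{sA}x\|$, so that $\|g\|_{L^p((0,+\infty);ds/s)}\le c\,\|x\|_{(X,D(A))_{\theta,p}}$. Writing $A^\alpha=A^{-(m-\alpha)}A^m$, commuting $A^{-(m-\alpha)}$ past the analytic semigroup and using the Balakrishnan representation together with the closedness of $A^m$, one gets
\[
A^\alpha e^{tA}x=\frac{1}{\Gamma(m-\alpha)}\int_0^{+\infty}\sigma^{m-\alpha-1}A^m e^{(t+\sigma)A}x\,d\sigma ,\qquad t>0,
\]
the integral being absolutely convergent because $\|A^m e^{sA}\|\le C_m s^{-m}$ and $m>\alpha>0$. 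Taking norms, inserting $\|A^m e^{sA}x\|=s^{\theta-m}g(s)$ and substituting $s=t+\sigma$ yields
\[
\|A^\alpha e^{tA}x\|\le\frac{1}{\Gamma(m-\alpha)}\int_t^{+\infty}(s-t)^{m-\alpha-1}s^{\theta-m}g(s)\,ds .
\]

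Next I would multiply by $t^{\alpha-\theta}$ and perform the change of variables $s=t/u$, $u\in(0,1)$: a direct bookkeeping of the powers of $t$ (they cancel) turns the right-hand side into the multiplicative convolution
\[
t^{\alpha-\theta}\|A^\alpha e^{tA}x\|\le\frac{1}{\Gamma(m-\alpha)}\int_0^1(1-u)^{m-\alpha-1}u^{\alpha-\theta-1}\,g(t/u)\,du .
\]
Since the measure $dt/t$ on $(0,+\infty)$ is invariant under the dilations $t\mapsto t/u$, Minkowski's integral inequality gives
\[
\Big\|\,t\mapsto t^{\alpha-\theta}\|A^\alpha e^{tA}x\|\,\Big\|_{L^p((0,+\infty);dt/t)}\le\frac{1}{\Gamma(m-\alpha)}\Big(\int_0^1(1-u)^{m-\alpha-1}u^{\alpha-\theta-1}\,du\Big)\,\|g\|_{L^p((0,+\infty);dt/t)},
\]
and the Beta integral $\int_0^1(1-u)^{m-\alpha-1}u^{\alpha-\theta-1}\,du$ is finite precisely because $m-\alpha>0$ and $\alpha-\theta>0$. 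Combining this with the first step proves the lemma, with $c$ depending only on $p$, $\theta$, $\alpha$ and the analytic constants of $A$.

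The main obstacle is exactly the passage from integer order $m$ to the fractional order $\alpha$: the cheap bound obtained by merely using that $A^{\alpha-m}$ is bounded loses the decay in $s$ that is needed near $s=0$, so one genuinely has to average the integer-order estimate against the kernel $\sigma^{m-\alpha-1}$, and the two hypotheses $\theta<\alpha$ and $\alpha<m$ are used precisely to make the resulting Beta integral converge at the endpoints $u=0$ and $u=1$ respectively. The only other points, both routine, are the justification of the Balakrishnan identity and of the interchange of $A^m$, $e^{(t+\sigma)A}$ and the $\sigma$-integral, which follow from the estimates $\|e^{sA}\|\le M$, $\|A^m e^{sA}\|\le C_m s^{-m}$ and the closedness of $A^m$.
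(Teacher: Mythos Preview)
Your argument is correct. The route is genuinely different from the paper's.

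The paper first passes to $A^{\alpha+1}$: it uses the moment inequality $\|A^{\alpha}y\|\le C_\alpha\|y\|^{1-\alpha}\|Ay\|^{\alpha}$ (so implicitly $0<\alpha<1$, which suffices for its application with $\alpha=1/2$) together with the characterizations $t^{1-\theta}\|Ae^{tA}x\|$, $t^{2-\theta}\|A^2e^{tA}x\|\in L^p(dt/t)$ to deduce $t^{1+\alpha-\theta}\|A^{\alpha+1}e^{tA}x\|\in L^p(dt/t)$; then it writes $A^{\alpha}e^{tA}x=-\int_t^{\infty}A^{\alpha+1}e^{sA}x\,ds$ and finishes with the Hardy-type inequality $\int_0^\infty t^{(\alpha-\theta)p}(\int_t^\infty\varphi(s)\,ds/s)^p\,dt/t\le(\alpha-\theta)^{-p}\int_0^\infty s^{(1+\alpha-\theta)p}\varphi(s)^p\,ds/s$. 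Your approach instead keeps the integer order $m>\alpha$ and averages $A^m e^{sA}x$ against the Balakrishnan kernel of $A^{-(m-\alpha)}$, then uses dilation invariance of $dt/t$ and Minkowski to land on a Beta integral. What you gain is that the proof works verbatim for any $\alpha>\theta$ (no restriction $\alpha<1$), and the roles of the two hypotheses $\theta<\alpha$ and $\alpha<m$ are made transparent through the convergence of $B(m-\alpha,\alpha-\theta)$; the paper's version is a bit shorter but leans on the moment inequality and the Hardy inequality as black boxes.
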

 \begin{proof}
 We recall that $t\mapsto t^{1-\theta} \|Ae^{tA}x\|$, $t\mapsto t^{2-\theta} \|A^2e^{tA}x\|$ belong to 
 $L^p ((0, +\infty), dt/t)$ with norms not exceeding $C\|x\|_{(X, D(A))_{\theta, p}}$. Using the interpolation inequality
 $\|A^{\alpha}y\|  \leq C_{\alpha}\|y\|^{1-\alpha} \|Ay\|^{\alpha} $ for every $y\in D(A)$, we get 
 $$t^{1-\theta +\alpha}\|A^{\alpha +1}e^{tA}x\| \leq C_{\alpha} \|t^{1-\theta}Ae^{tA}x\|^{1-\alpha}  \|t^{2-\theta}A^2e^{tA}x\|^{\alpha} \leq C_{\alpha}' (\|t^{1-\theta}Ae^{tA}x\| + \|t^{2-\theta}A^2e^{tA}x\|).$$
Therefore, the function 
$$t\mapsto t^{1-\theta +\alpha}\|A^{\alpha +1}e^{tA}x\|$$
belongs to $L^p ((0, +\infty), dt/t)$, and its norm does not exceed $M\|x\|_{(X, D(A))_{\theta, p}}$ for some $M$ independent of $x$. Since
$$\|A^{\alpha} e^{tA}x\| = \|\int_{t}^{+\infty} A^{\alpha +1}e^{sA}x\,ds\| \leq \int_{t}^{+\infty}\| A^{\alpha +1}e^{sA}x\|ds, \quad t>0, $$
the statement follows applying the Young's inequality 
$$\int_0^{+\infty} t^{(\alpha -\theta)p}\bigg(\int_t^{+\infty}\varphi(s)\,\frac{ds}{s}\bigg)^p \frac{dt}{t}
\leq \frac{1}{(\alpha -\theta)^{p}} \int_0^{+\infty}s^{(1+\alpha -\theta)p}\varphi(s)^p\,\frac{ds}{s} $$
to the function $\varphi(s) = s \| A^{\alpha +1}e^{sA}x\|$. 
\end{proof}

Extension operators are useful in a number of problems, for instance they may be used to extend parts of the theories about PDEs with homogeneous Dirichlet boundary conditions (\cite{DPL,AVN})  to nonhomogeneous boundary conditions. Concerning traces, ${\mathcal E }$ can be used to define a nice projection
$${\mathcal P}: W^{1,p}(\O, \mu)\mapsto \oo{W}^{1,p} (\O,\mu), \quad {\mathcal P}u = u - {\mathcal E }(\Tr u)$$
that allows to split $W^{1,p}(\O, \mu)$ as the direct sum of $\oo{W}^{1,p} (\O,\mu) $ plus a complemented subspace.
Using our good extension operator ${\mathcal E }$ we prove another property of $\oo{W}^{1,p} (\O,\mu) $, similar to the finite dimensional case. 

\begin{Proposition}
The subspace  of $W^{1,p}(\O,\mu)\cap C(\overline{\O})$ consisting of functions that vanish in a neighborhood of $\partial \O$ is dense in $\oo{W}^{1,p} (\O,\mu)$. 
\end{Proposition}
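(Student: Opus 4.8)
The plan is to work in the coordinates of Lemmas~\ref{Le:iso} and~\ref{tracceGauss}, identifying $X$ with $\R\times Y$, $\mu$ with $N_{0,1}\otimes\mu_Y$, $\O$ with $(0,+\infty)\times Y$ and $\overline{\O}$ with $[0,+\infty)\times Y$, so that $\partial\O=\{0\}\times Y$. Let $u\in\oo{W}^{1,p}(\O,\mu)$; by Proposition~\ref{equivalenza} its null extension $u_0$ belongs to $W^{1,p}(X,\mu)$, and $D_Hu_0=(D_Hu)_0$. The approximation is carried out in two steps. First I would cut $u$ off near $\partial\O$: fix smooth functions $\chi_\eps:\R\to[0,1]$ with $\chi_\eps\equiv 0$ on $(-\infty,\eps]$, $\chi_\eps\equiv 1$ on $[2\eps,+\infty)$ and $|\chi_\eps'|\le C/\eps$, and set $u^\eps:=\chi_\eps(\hat{h})\,u_0$. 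Since $\hat{h}\in X^*$ is continuous and linear with $D_H\hat{h}=h$, the function $\chi_\eps(\hat{h})$ is bounded and Lipschitz on $X$ with $D_H(\chi_\eps(\hat{h}))=\chi_\eps'(\hat{h})h$; hence $u^\eps\in W^{1,p}(X,\mu)$ by the Leibniz rule, and $u^\eps$ vanishes on $\{\hat{h}\le\eps\}$, a set containing $X\setminus\O$ and a neighbourhood of $\partial\O$. Second, for $\eps$ fixed I would approximate $u^\eps$ by $\chi_\eps(\hat{h})\phi_n$ with $\phi_n$ smooth cylindrical; these functions are continuous on $X$ and still vanish near $\partial\O$.

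For the first step one must show $u^\eps\to u_0$ in $W^{1,p}(X,\mu)$ as $\eps\to 0^+$. Convergence $u^\eps\to u_0$ in $L^p(X,\mu)$ is dominated convergence; and $D_Hu^\eps=\chi_\eps(\hat{h})(D_Hu)_0+u_0\,\chi_\eps'(\hat{h})\,h$, where the first term converges to $D_Hu_0$ in $L^p(X,\mu;H)$ by dominated convergence, while the second is bounded by $(C/\eps)\,|u_0|\,\one_{\{\eps\le\hat{h}\le 2\eps\}}$ because $|h|_H=1$. So it suffices to prove the Hardy type estimate
\begin{equation}
\label{hardyfin}
\frac{1}{\eps^{p}}\int_{\{\eps\le\hat{h}\le 2\eps\}}|u_0|^{p}\,d\mu\;\longrightarrow\;0,\qquad \eps\to 0^{+}.
\end{equation}
Writing $V(t):=u(th+\cdot)$, Lemma~\ref{Le:iso} gives $V\in W(p,1/p,L^{p}(Y,\mu_Y),W^{1,p}(Y,\mu_Y))$, which (with $\theta=1/p$ in Lemma~\ref{tracceGauss}) is contained in $W^{1,p}((0,T);L^{p}(Y,\mu_Y))$ for every $T$, hence $V$ has a continuous $L^{p}(Y,\mu_Y)$-valued representative on $[0,+\infty)$; moreover $V(0)=0$, since under the identification of Lemmas~\ref{Le:iso}--\ref{tracceGauss} the trace of $u$ at $\partial\O$ corresponds to the value $V(0)$, and $\Tr u=0$. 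Therefore $V(t)=\int_0^tV'(\sigma)\,d\sigma$, and by Hölder's inequality $\|V(t)\|_{L^{p}(Y,\mu_Y)}^{p}\le t^{p-1}\int_0^t\|V'(\sigma)\|_{L^{p}(Y,\mu_Y)}^{p}\,d\sigma$. Since $\{\eps\le\hat{h}\le 2\eps\}$ corresponds to $t\in[\eps,2\eps]$ and $N_{0,1}(dt)\le(2\pi)^{-1/2}dt$,
\begin{equation}
\label{hardyfin2}
\frac{1}{\eps^{p}}\int_{\{\eps\le\hat{h}\le 2\eps\}}|u_0|^{p}\,d\mu=\frac{1}{\eps^{p}}\int_{\eps}^{2\eps}\|V(t)\|_{L^{p}(Y,\mu_Y)}^{p}\,N_{0,1}(dt)\le\frac{2^{p-1}}{\sqrt{2\pi}}\int_{0}^{2\eps}\|V'(\sigma)\|_{L^{p}(Y,\mu_Y)}^{p}\,d\sigma ,
\end{equation}
which tends to $0$ because $\sigma\mapsto\|V'(\sigma)\|_{L^{p}(Y,\mu_Y)}^{p}$ is integrable on $(0,+\infty)$ against $N_{0,1}$ and $N_{0,1}$ is equivalent to Lebesgue measure near $0$. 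This proves \eqref{hardyfin}, hence the first step.

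For the second step, with $\eps$ fixed, multiplication by the bounded Lipschitz function $\chi_\eps(\hat{h})$ is a bounded operator on $W^{1,p}(X,\mu)$ (Leibniz rule, \cite[\S 5.2--5.4]{Boga}), both $\chi_\eps(\hat{h})$ and $\chi_\eps'(\hat{h})h$ being bounded. Taking smooth cylindrical $\phi_n\to u_0$ in $W^{1,p}(X,\mu)$, we get $\psi_n:=\chi_\eps(\hat{h})\phi_n\to\chi_\eps(\hat{h})u_0=u^\eps$ in $W^{1,p}(X,\mu)$. Each $\psi_n$ is continuous on $X$ and vanishes on the open set $\{\hat{h}<\eps\}\supset X\setminus\O$, so $\psi_n|_{\overline{\O}}\in C(\overline{\O})$ vanishes in a neighbourhood of $\partial\O$; its restriction to $\O$ lies in $W^{1,p}(\O,\mu)$, and since its null extension is $\psi_n\in W^{1,p}(X,\mu)$, it lies in $\oo{W}^{1,p}(\O,\mu)$ by Definition~\ref{W1p0}. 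Given $\delta>0$ one then picks first $\eps$ with $\|u^\eps-u_0\|_{W^{1,p}(X,\mu)}<\delta/2$ and then $n$ with $\|\psi_n-u^\eps\|_{W^{1,p}(X,\mu)}<\delta/2$; since restriction to $\O$ does not increase the norm, $\|\psi_n|_{\O}-u\|_{W^{1,p}(\O,\mu)}\le\|\psi_n-u_0\|_{W^{1,p}(X,\mu)}<\delta$, which yields the density.

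The only non-routine point is the Hardy type estimate \eqref{hardyfin}: this is exactly where the hypothesis $\Tr u=0$ is used, via $V(0)=0$. Without it one only obtains boundedness of the left-hand side of \eqref{hardyfin2}, not the decay needed to kill the cut-off remainder $u_0\chi_\eps'(\hat{h})h$. The estimate in turn rests on the (standard, but not entirely transparent) identification, contained in the proof of the halfspace Proposition via Lemmas~\ref{Le:iso} and~\ref{tracceGauss}, of the trace of $u$ at $\partial\O$ with the pointwise value $V(0)$ of the $L^{p}(Y,\mu_Y)$-valued curve $t\mapsto u(th+\cdot)$; everything else is routine cut-off and density.
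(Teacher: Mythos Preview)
Your argument is correct, and it takes a genuinely different route from the paper.  The paper first produces continuous approximants vanishing \emph{at} $\partial\O$ by using the extension operator ${\mathcal E}$ built from the Ornstein--Uhlenbeck semigroup: given Lipschitz $u_n\to u$, it sets ${\mathcal P}u_n:=u_n-{\mathcal E}(\Tr u_n)$, checks that ${\mathcal E}$ sends Lipschitz boundary data to continuous functions on $\overline{\O}$, and then pushes the zeros away from the boundary by truncating the \emph{function values}, i.e.\ passing to $\eta(k\,{\mathcal P}u_n)/k$ with $\eta$ vanishing near the origin.  By contrast you never touch ${\mathcal E}$ or the semigroup: you cut off directly in the normal coordinate $\hat h$ and pay for this with the one--dimensional Hardy--type estimate \eqref{hardyfin}, which you obtain from $V(0)=0$ through the identification of Lemma~\ref{Le:iso}.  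Your route is more elementary (no semigroup, no extension operator) and is the classical finite--dimensional argument transplanted via Lemma~\ref{Le:iso}; the paper's route, on the other hand, separates the two steps in a way that would carry over to more general $\O$ once an extension operator and continuous approximants vanishing on $\partial\O$ are available.  One small remark: the identification $\Tr u=V(0)$ that you flag as ``not entirely transparent'' can be bypassed by working with the null extension $u_0\in W^{1,p}(X,\mu)$ from the start --- the curve $\widetilde V(t)=u_0(th+\cdot)$ lies in $W^{1,p}_{\mathrm{loc}}(\R;L^p(Y,\mu_Y))$ and vanishes for a.e.\ $t<0$, hence by continuity $\widetilde V(0)=0$, which is all you need for the Hardy estimate.
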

\begin{proof}
Let $u\in \oo{W}^{1,p} (\O,\mu)$ and let $(u_n)$ be a sequence of Lipschitz continuous functions   that  converges to $u $ in $W^{1,p}(\O, \mu)$. Then the sequence $({\mathcal P}u_n)$ converges to ${\mathcal P}u =u$ in 
$W^{1,p} (\O,\mu)$. Let us prove that ${\mathcal E }(\Tr u_n)$ is continuous in $\overline{\O}$. Indeed, $\Tr u_n = u_{n|\partial \O}$ is Lipschitz continuous, and for each Lipschitz continuous $f:Y\mapsto \R$ we have
$$|{\mathcal E }f(t,y) - {\mathcal E }f(t_0,y_0)| \leq \int_Y |f(e^{-t^2}y + \sqrt{1-e^{-2t^2}}z) - f(e^{-t_0^2}y_0 + \sqrt{1-e^{-2t_0^2}}z| )\mu_y(dz)$$
$$\leq C\int_Y (\|e^{-t^2}y  - e^{-t_0^2}y_0\| +  |\sqrt{1-e^{-2t^2}}-  \sqrt{1-e^{-2t_0^2}}|\, \|z\|)\mu_y(dz)$$
where $C$ is the Lipschitz constant of $f$. Therefore,  ${\mathcal E }f$ is continuous (in fact, locally H\"older continuous) in $\overline{\O}$. 

So, ${\mathcal P}u_n\in W^{1,p}(\O, \mu) \cap C(\overline{\O})$ vanish at $\partial \O$ and approach $u$ in $W^{1,p}(\O, \mu)$. In their turn, let us approach each ${\mathcal P}u_n$ by continuous functions that vanish in a neighborhood of $\partial \O$. To this aim, define a piecewise linear function $\eta$ setting  $\eta(\xi) = 0$ for $|\xi|\leq 1$, $\eta(\xi) = 2\xi -2 \sign \xi$ for $1\leq |\xi|\leq 2$, and $\eta(\xi) = \xi$ for $|\xi|\geq 2$. Set
$$u_{k,n} = \frac{\eta \circ (ku_n)}{k}.$$
As easily  seen, for any $n\in \N$ the sequence $(u_{k,n})$ converges to $u_n$ in $W^{1,p}(\O, \mu)$ as $k\to \infty$. Moreover, $u_{k,n} $ vanishes in the set $\{ x\in \O:\; |u_n(x)|< 1/k\}$, which is a neighborhood of $\partial \O$ in $\O$ because $u_n$ is continuous and vanishes at $\partial \O$. 
\end{proof}

%%%%%%%%%%%%%%%%%%%%%%%%%%%%%%%%%%%%%%%%%%%%%%%%%%%%%%%%
\subsection{Regions below graphs}
%%%%%%%%%%%%%%%%%%%%%%%%%%%%%%%%%%%%%%%%%%%%%%%%%%%%%%%%

Simple generalization of halfspaces  are the regions below graphs of good functions. 

Let us keep the notation and setting of \S \ref{half}. We fix $\hat{h}\in X^*$,  such that $\|\hat{h}\|_{L^2(X, \mu)} =1$, and  we set  $h := Q(\hat{h})$. Then  $|h|_H=1$ and $\hat{h}(h)=1$. We split $X = $  span$\,h \oplus Y$, where $Y=$ $ (I-\Pi_h)(X)$, $\Pi_h(x) = \hat{h}(x)h$. The Gaussian measure $\mu\circ (I-\Pi_h)^{-1}$ on $Y$ is denoted by $\mu_Y$. 

Let $F\in \cap_{p>1}W^{2,p}(Y, \mu_Y)$. Choose any Borel precise version of $F$ and set 
$$G:X\mapsto \R, \quad G(x) = \hat{h}(x) - F( (I-\Pi_h)(x)). $$
Then, $G\in \cap_{p>1}W^{2,p}(X, \mu)$ and $D_HG(x) = h - D_{H_Y}F((I-\Pi_h)(x))$, so that $|D_HG(x)|_H^2 = 1+|D_{H_Y}F(I-\Pi_h)(x)|_{H_Y}^2\geq 1$. Hence, $G$ satisfies Hypothesis \ref{HypG}. The sublevel set $\O=G^{-1}(-\infty, 0)$ is just the region below the graph of $F$. 

% Let us consider an orthonormal  basis $\mathcal V = \{v_k:\;k\geq 1\}$ of $H$ consisting of $v_1 =h$ and of an orthonormal basis $\{v_k:\;k\geq 2\}$ of $Y$. 
% Then the  integration formulae \eqref{parti} read as  
% $$\int_{\O}D_1\varphi \, \mu(dx) = \int_{\O} \hat{h}(x)\varphi(x)\, \mu(dx) + \int_{\mbox{\rm graph}\,F} \frac{\varphi }{(1+|D_{H_Y}F(I-\Pi_h)(x)|_{H_Y}^2)^{1/2}}\,d\rho, $$
%
% and for $k\geq 2$, 
%
% $$\int_{\O}D_k\varphi \, \mu(dx) = \int_{\O} \hat{v}_k(x)\varphi(x)\, \mu(dx) - \int_{\mbox {\rm graph}\,F} \frac{D_kF}{(1+|D_{H_Y}F(I-\Pi_h)(x)|_{H_Y}^2)^{1/2}}\,\varphi \, d\rho .$$

Since $|D_HG|_H \geq 1$ at $G^{-1}(0)$, if also the first assumption in \eqref{strana1} is satisfied (namely, 
$\mu_Y$- ess sup $|D_{H_{Y}}F|_{H_Y} <+\infty$, $\mu_Y$- ess sup $L_YF <+\infty$) then the trace operator is bounded from $W^{1,p}(\O, \mu)$ to $L^p($graph$\,F, \rho)$ for every $p>1$.

%%%%%%%%%%%%%%%%%%%%%%%%%%%%%%%%%%%%%%%%%%%%%%%%%%%%%%%%
\subsection{Balls and ellipsoids in Hilbert spaces}
%%%%%%%%%%%%%%%%%%%%%%%%%%%%%%%%%%%%%%%%%%%%%%%%%%%%%%%%

Let $X$ be a separable Hilbert space endowed with a nondegenerate centered Gaussian measure $\mu$, with covariance $Q$. By Trace$\,Q$ we mean as usual the sum of its eigenvalues, which of course has nothing to do with the trace operator. As mentioned in Section 2, we fix an orthonormal  basis $\{e_k:\;k\in \N\}$ of $X$  consisting of  eigenvectors of $Q$, $Qe_k= \lambda_ke_k$, and the corresponding orthonormal basis of $H= Q^{1/2}(X)$ is $\mathcal{V} = \{v_k:= \sqrt{\lambda_k}e_k:\;k\in \N\}$. For each $k$, the function $\hat{v}_k$ is just $\hat{v}_k(x) = x_k/\sqrt{\lambda_k} $, where $x_k =\langle x, e_k\rangle_X$.

For every $r>0$ the function $G(x) := \|x\|^2 - r^2$ satisfies Hypothesis \ref{HypG}. Indeed, it is smooth, $\O = B(0,r)$, $D_HG(x) =2Qx$ and $1/|D_HG|_H = 1/2\|Q^{1/2}x\|$ is easily seen to belong to   $L^p(X, \mu)$ for every $p$. 

Then, for each $p>1$ and  $\varphi\in W^{1,p}(B(0,r), \mu)$, \eqref{parti} holds, and it  reads as 
$$\int_{B(0,r)} D_k\varphi \,d\mu =  \frac{1}{\sqrt{\lambda_k} }  \int_{B(0,r)} x_k\varphi \,d\mu +   \int_{\|x\| = r} \frac{\sqrt{\lambda_k} x_k}{\|Q^{1/2}x\| }\varphi \,d\rho , \quad k\in \N .$$
Moreover  the trace operator is bounded from $W^{1,p}(B(0,r), \mu)$ to $L^{q}(\partial B(0,r), \rho)$ for every $1\leq q <p$. 
The question whether it is bounded from $W^{1,p}(B(0,r), \mu)$ to $L^{p}(\partial B(0,r), \rho)$ is not obvious, and it is related to the occurrence of a Hardy type inequality, as the next proposition shows. By ``Hardy type inequality" we mean
\begin{equation}
\label{Hardy0}\exists C>0:\;\;\int_{B(0,r)}\frac{|\varphi|^p}{\|Q^{1/2}x\|^p}\,d\mu \leq C\|\varphi\|_{W^{1,p}(B(0,r), \mu)}^p, \quad \varphi \in W^{1,p}(B(0,r), \mu).
\end{equation}
We need in fact a consequence of \eqref{Hardy0}, and precisely
\begin{equation}
\label{Hardy}
\exists C>0:\;\;\int_{B(0,r)}\frac{|\varphi|^p}{\|Q^{1/2}x\|}\,d\mu \leq C\|\varphi\|_{W^{1,p}(B(0,r), \mu)}^p, \quad \varphi \in W^{1,p}(B(0,r), \mu).
\end{equation}

\begin{Proposition}
If \eqref{Hardy} holds, then the trace operator is bounded from $W^{1,p}(B(0,r), \mu)$ to   $L^{p}(\partial B(0,r), \rho)$. Conversely, let $\lambda_{\max}$ be the maximum eigenvalue of $Q$. 
If the trace operator is bounded from $W^{1,p}(B(0,r), \mu)$ to   $L^{p}(\partial B(0,r), \rho)$ and $r^2 < \mbox{\rm Trace}\,Q   - \lambda_{\max}$, then 
\eqref{Hardy} holds. 
\end{Proposition}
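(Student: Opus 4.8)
The plan is to derive both implications from the single identity \eqref{partitraccia2} applied to $G(x)=\|x\|^2-r^2$ with the critical exponent $q=p$. Proposition \ref{p5.1} gives \eqref{partitraccia2} only for $q<p$, but a Lipschitz function lies in $W^{1,s}(X,\mu)$ for every $s$, so the identity with exponent $p$ does hold for every Lipschitz $\varphi$, and for such $\varphi$ one has $\Tr\varphi=\varphi_{|\partial B(0,r)}$. I would therefore establish the two norm inequalities first for Lipschitz $\varphi$ and then pass to general $\varphi\in W^{1,p}(B(0,r),\mu)$ by density: on the boundary side this is immediate from the definition of $\Tr$, and on the side of \eqref{Hardy} it follows from Fatou's lemma applied along a Lipschitz sequence converging to $\varphi$ in norm and almost everywhere.

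First I would record the needed computations for $G(x)=\|x\|^2-r^2$. One has $D_HG(x)=2Qx$, hence $|D_HG(x)|_H=2\|Q^{1/2}x\|$; since $|Qx|_H=\|Q^{1/2}x\|$, Cauchy--Schwarz in $H$ gives
\[
\Bigl|\frac{\langle D_H\varphi,D_HG\rangle_H}{|D_HG|_H}\Bigr|\leq |D_H\varphi|_H .
\]
Moreover $LG(x)=2\,\mbox{\rm Trace}\,Q-2\|x\|^2$ and $\langle D_H^2G\,D_HG,D_HG\rangle_H=8\|Qx\|^2$, so by the formula for $\di(D_HG/|D_HG|_H)$ used in Proposition \ref{p5.1},
\[
\di\Bigl(\frac{D_HG}{|D_HG|_H}\Bigr)(x)=\frac{\mbox{\rm Trace}\,Q-\|x\|^2}{\|Q^{1/2}x\|}-\frac{\|Qx\|^2}{\|Q^{1/2}x\|^3}.
\]
On $B(0,r)$, using $\|x\|^2<r^2$ and the spectral bound $0\leq\|Qx\|^2\leq\lambda_{\max}\|Q^{1/2}x\|^2$, this yields the two-sided estimate
\[
\frac{\mbox{\rm Trace}\,Q-r^2-\lambda_{\max}}{\|Q^{1/2}x\|}\ \leq\ \di\Bigl(\frac{D_HG}{|D_HG|_H}\Bigr)(x)\ \leq\ \frac{\mbox{\rm Trace}\,Q}{\|Q^{1/2}x\|}.
\]
Finally, the gradient term in \eqref{partitraccia2} is controlled by $p\int_{B(0,r)}|\varphi|^{p-1}|D_H\varphi|_H\,d\mu$, hence by $C\|\varphi\|_{W^{1,p}(B(0,r),\mu)}^p$ via H\"older and Young.

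For the implication from \eqref{Hardy} to boundedness of $\Tr$ on $L^p$ I would insert the upper bound for the divergence into \eqref{partitraccia2}: the divergence term becomes at most $\mbox{\rm Trace}\,Q\int_{B(0,r)}|\varphi|^p/\|Q^{1/2}x\|\,d\mu$, which is $\leq C\|\varphi\|_{W^{1,p}}^p$ by \eqref{Hardy}, and together with the bound on the gradient term this gives $\int_{\partial B(0,r)}|\varphi|^p\,d\rho\leq C\|\varphi\|_{W^{1,p}(B(0,r),\mu)}^p$. For the converse I would instead move the gradient term in \eqref{partitraccia2} to the left, obtaining
\[
\int_{B(0,r)}\di\Bigl(\frac{D_HG}{|D_HG|_H}\Bigr)|\varphi|^p\,d\mu=\int_{\partial B(0,r)}|\varphi|^p\,d\rho-p\int_{B(0,r)}|\varphi|^{p-2}\varphi\,\frac{\langle D_H\varphi,D_HG\rangle_H}{|D_HG|_H}\,d\mu,
\]
whose right-hand side is $\leq C\|\varphi\|_{W^{1,p}}^p$ (the boundary integral by the assumed boundedness of $\Tr$, the other term as above). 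Here the hypothesis $r^2<\mbox{\rm Trace}\,Q-\lambda_{\max}$ enters: it makes $c_0:=\mbox{\rm Trace}\,Q-r^2-\lambda_{\max}>0$, so the lower bound for the divergence gives $c_0\int_{B(0,r)}|\varphi|^p/\|Q^{1/2}x\|\,d\mu\leq C\|\varphi\|_{W^{1,p}}^p$, which is \eqref{Hardy} for Lipschitz $\varphi$; Fatou's lemma then yields it for all $\varphi\in W^{1,p}(B(0,r),\mu)$.

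I do not expect a genuine obstacle. The substantive points are the exact evaluation of $\di(D_HG/|D_HG|_H)$ and its two-sided bound on $B(0,r)$ --- the only place where the condition $r^2<\mbox{\rm Trace}\,Q-\lambda_{\max}$ is used, precisely to keep the lower bound strictly positive --- together with the small amount of bookkeeping forced by the fact that \eqref{partitraccia2} with exponent exactly $p$ is available only for (restrictions of) Lipschitz functions, which is what necessitates the density and Fatou steps.
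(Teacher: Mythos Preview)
Your proof is correct and follows essentially the same route as the paper: both apply \eqref{partitraccia2} with exponent $p$ to Lipschitz $\varphi$, compute the divergence term explicitly as $(\mbox{Trace}\,Q-\|x\|^2)/\|Q^{1/2}x\|-\|Qx\|^2/\|Q^{1/2}x\|^3$, bound the gradient term by $\|\varphi\|_{W^{1,p}}^p$ using $|Qx|_H=\|Q^{1/2}x\|$, and then use the upper bound (with \eqref{Hardy}) for the forward direction and the lower bound $(\mbox{Trace}\,Q-r^2-\lambda_{\max})/\|Q^{1/2}x\|$ (via $\|Qx\|^2\leq\lambda_{\max}\|Q^{1/2}x\|^2$) for the converse. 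You are slightly more explicit than the paper about the passage from Lipschitz to general $\varphi$ (density for the trace side, Fatou for the Hardy side), which the paper compresses into a single clause.
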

\begin{proof}
Since  $LG(x) = 2($Trace$\,Q - \|x^2\|)$, for every Lipschitz continuous $\varphi$  formula \eqref{partitraccia2} reads as 
\begin{equation}
\label{sfera}
\begin{array}{lll}
\ds  \int_{\{\|x\|=r\}}  | \varphi|^p   \,d\rho  &= & \ds  p \int_{B(0,r)} |\varphi|^{p-2} \varphi \frac{\langle D_H\varphi, Qx \rangle_H }{\|Q^{1/2}x\|}  \,d\mu +  \int_{B(0,r)} \frac{ \mbox{\rm Trace}\,Q - \|x^2\|}{\|Q^{1/2}x\|} \,| \varphi|^p \,d\mu 
\\
\\
& -  &\ds  \int_{B(0,r)} \frac{\|Qx\|^2}{\|Q^{1/2}x\|^3} \,| \varphi|^p \,d\mu .
\end{array}
\end{equation}
As we already remarked, the first integral in the right hand side is harmless, since $ |Qx |_H= \|Q^{1/2}x\|_X$ so that 
$$\bigg|\int_{B(0,r)}  |\varphi|^{p-2} \varphi \frac{\langle D_H\varphi, Qx \rangle_H }{\|Q^{1/2}x\|}  \,d\mu\bigg| \leq 
\int_{B(0,r)}  |\varphi|^{p-1} |D_H\varphi|_H \,d\mu \leq  \|\varphi\|_{W^{1,p}(B(0,r))}^p .$$
Therefore, $ \int_{\{\|x\|=r\}}  | \varphi|^p   \,d\rho$ is bounded by  $\|\varphi\|_{W^{1,p}(B(0,r))}^p$ (up to a multiplication constant) iff there is $c$ such that 
$$I:=  \int_{B(0,r)} \bigg( \frac{ \mbox{\rm Trace}\,Q - \|x^2\|}{\|Q^{1/2}x\|} - \frac{\|Qx\|^2}{\|Q^{1/2}x\|^3} \bigg)\,| \varphi|^p \,d\mu \leq c\;\|\varphi\|_{W^{1,p}(B(0,r))}^p.$$
If \eqref{Hardy} holds,  then $I\leq C\,\mbox{\rm Trace}\,Q\|\varphi\|_{W^{1,p}(B(0,r))}^p$, and the first statement follows. 

Concerning the converse, since $\|Qx\|^2\leq \|Q^{1/2}\|^2 \|Q^{1/2}x\|^2\leq \lambda_{\max}\|Q^{1/2}x\|^2$, 
then 
$$ ( \mbox{\rm Trace}\,Q - r^2 - \lambda_{\max}) \int_{B(0,r)}\frac{|\varphi|^p}{ \|Q^{1/2}x\|} \,d\mu \leq I.$$
Therefore, if $r$ is small enough (namely, $r^2 < \mbox{\rm Trace}\,Q   - \lambda_{\max}$) and the trace is bounded from $W^{1,p}(B(0,r),  \mu)$ to   $L^{p}(\partial B(0,r), \rho)$, then  \eqref{Hardy} holds for every Lipschitz continuous $\varphi$ and hence for every $\varphi \in W^{1,p}(B(0,r), \mu)$. \end{proof}

However, the occurrence of \eqref{Hardy} is an open problem, related to other open problems in the theory of Sobolev spaces in infinite dimensions. For instance, if a bounded  extension operator ${\mathcal E }$ from $W^{1,p}(B(0,r), \mu)$ to $W^{1,p}(X, \mu)$ existed, then  \eqref{Hardy}  would be a consequence of the Hardy type inequality
\begin{equation}
\label{HardyX}
\exists C>0:\;\;\int_{X}\frac{|\varphi|^p}{\|Q^{1/2}x\|^p}\,d\mu \leq C\|\varphi\|_{W^{1,p}(X, \mu)}^p, \quad \varphi \in W^{1,p}(X, \mu), 
\end{equation}
that is easily seen to hold, under suitable assumptions on $Q$. But existence of a bounded extension operator is still an open problem. 

\vspace{3mm}

The above results may be extended without important modifications to balls centered at $x_0\neq 0$, and to ellipsoids defined by $E = \{ x\in X:\; \sum_{k=1}^{\infty}\alpha_k x_k^2 <r^2\}$ for some bounded nonnegative  sequence $(\alpha_k)$, not eventually vanishing (if $\alpha_k\neq 0$ for finitely many $k$, $E$ is a cylindrical set). Instead, the case where $(\alpha_k)$ is unbounded needs some more attention.

\begin{Lemma}
\label{serie}
Let $(\alpha_k)$ be any  sequence of nonnegative numbers, not eventually vanishing. Then the function
$$G(x) = \sum_{k=1}^{\infty} \alpha_k x_k^2$$
belongs to $L^1(X, \mu)$ iff 
\begin{equation}
\label{ns}
\sum_{k=1}^{\infty} \lambda_k\alpha_k <\infty , 
\end{equation}
and in this case $G \in W^{2,p}(X, \mu)$, it is $C_{2,p}$-quasicontinuous, and $1/|D_HG|_H\in L^p(X, \mu)$ for every $p>1$. Moreover, setting $E_r=\{ x\in X:\; \sum_{k=1}^{\infty}\alpha_k x_k^2 \leq r^2\}$,  $\mu(E_r)>0$ for every $r>0$. 
\end{Lemma}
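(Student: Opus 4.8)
## Proof Proposal for Lemma \ref{serie}

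The plan is to analyze $G(x) = \sum_{k=1}^\infty \alpha_k x_k^2$ as an $L^1$-limit of its partial sums and exploit the explicit structure of the Gaussian measure. First I would compute $\int_X \alpha_k x_k^2 \, d\mu = \alpha_k \lambda_k$, since under $\mu$ the coordinate $x_k = \langle x, e_k\rangle_X$ is a centered Gaussian with variance $\lambda_k$. By monotone convergence (all terms are nonnegative), $\int_X G \, d\mu = \sum_k \alpha_k \lambda_k$, so $G \in L^1(X,\mu)$ if and only if \eqref{ns} holds, and in that case $G$ is finite $\mu$-a.e. This immediately gives the equivalence claimed.

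Assuming \eqref{ns}, I would next establish the Sobolev regularity. Set $G_n(x) = \sum_{k=1}^n \alpha_k x_k^2$; these are smooth cylindrical functions with $D_H G_n(x) = 2\sum_{k=1}^n \alpha_k \lambda_k x_k v_k$ (using $D_H(x_k) = \sqrt{\lambda_k}\, v_k$, equivalently $D_k(x_k^2) = 2x_k\langle e_k, e_k\rangle_X\cdot$ the appropriate normalization; I would write this carefully in terms of $\hat v_k$ and $v_k$) and $D_H^2 G_n$ a fixed finite-rank form. To show $G_n \to G$ in $W^{2,p}(X,\mu)$ for every $p>1$, I would estimate $\|G_n - G_m\|_{L^p}$, $\|D_H(G_n - G_m)\|_{L^p(X,\mu;H)}$ and $\|D_H^2(G_n-G_m)\|_{L^p(X,\mu;\mathcal H_2)}$ using the fact that all $L^p$ norms of polynomials of bounded degree in the Gaussian variables are controlled by a fixed constant times the $L^2$ norm (hypercontractivity / equivalence of norms on fixed Wiener chaoses, \cite[Ch. 5]{Boga}); the $L^2$ computations reduce to convergent series in $\alpha_k \lambda_k$ and $\alpha_k^2\lambda_k^2$, which are finite because $(\alpha_k\lambda_k)\in \ell^1 \subset \ell^2$. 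Hence $G \in W^{2,p}(X,\mu)$ for all $p>1$, and the $C_{2,p}$-quasicontinuity of the chosen precise Borel version is then automatic from Section \ref{Notation and preliminaries}.

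For the nondegeneracy $1/|D_H G|_H \in L^p(X,\mu)$, I would use $|D_H G(x)|_H^2 = 4\sum_{k=1}^\infty \alpha_k^2 \lambda_k^2 x_k^2 \geq 4\alpha_j^2\lambda_j^2 x_j^2$ for any fixed $j$ with $\alpha_j > 0$ (such $j$ exists since $(\alpha_k)$ is not eventually vanishing). Therefore $1/|D_H G|_H \leq 1/(2\alpha_j \lambda_j |x_j|)$, and since $x_j$ is a nondegenerate one-dimensional Gaussian, $\int_X |x_j|^{-p} \, d\mu = \int_\R |t|^{-p} N_{0,\lambda_j}(dt) < \infty$ for every $p \in [1, 1)$ — wait, this integral diverges for $p \geq 1$. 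The fix: pick \emph{two} indices $j_1 \neq j_2$ with $\alpha_{j_i} > 0$, or more robustly observe that near the origin one needs several coordinates; concretely $|D_H G(x)|_H^2 \geq 4c \sum_{k\in S} x_k^2$ where $S$ is any finite set of indices with $\alpha_k > 0$ and $c = \min_{k\in S}\alpha_k^2\lambda_k^2$, so $1/|D_H G|_H \lesssim (\sum_{k\in S} x_k^2)^{-1/2}$, and $(x_k)_{k\in S}$ is a nondegenerate Gaussian vector in $\R^{|S|}$; for $|S|$ large enough $|y|^{-1}$ is locally integrable in $\R^{|S|}$ against Lebesgue (hence against the Gaussian), while at infinity it is bounded, giving $1/|D_H G|_H \in L^p$ for all $p < |S|$, and $|S|$ can be taken arbitrarily large if infinitely many $\alpha_k \neq 0$; if only finitely many are nonzero one still gets all $p$ up to that count, which suffices here since only finitely many $p$ matter at a time — but actually the lemma asks for all $p>1$, so I would need $|S|$ unbounded, i.e. handle the case of finitely many nonzero $\alpha_k$ separately (there $E$ is cylindrical, already excluded in the surrounding text, so I may assume infinitely many $\alpha_k \neq 0$). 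The main obstacle is precisely this integrability-near-the-origin bookkeeping; everything else is routine Gaussian computation.

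Finally, for $\mu(E_r) > 0$: since $G \in L^1(X,\mu)$, by Chebyshev $\mu(\{G > r^2\}) \leq r^{-2}\int_X G\,d\mu < 1$ for $r$ large, hence $\mu(E_r) > 0$ for large $r$; for arbitrary $r > 0$ I would use that $E_r$ contains an open neighborhood of $0$ in a suitable topology, or more simply note $E_r \supset \{x : x_k = 0 \text{ for } k \notin S,\ \sum_{k\in S}\alpha_k x_k^2 \leq r^2\}$ up to the complementary coordinates being unconstrained — rather, condition: $\mu(E_r) \geq \mu_S(\{(x_k)_{k\in S} : \sum \alpha_k x_k^2 \leq r^2/2\}) \cdot \mu(\{\sum_{k\notin S}\alpha_k x_k^2 \leq r^2/2\})$ by independence of the coordinate blocks, where $S$ is a finite set capturing most of $\sum\alpha_k\lambda_k$; the first factor is positive because it is the Gaussian measure of a neighborhood of $0$ in $\R^{|S|}$, and the second factor is positive (indeed close to $1$) by Chebyshev once $\sum_{k\notin S}\alpha_k\lambda_k$ is small. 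This completes the argument.
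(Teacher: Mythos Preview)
Your argument is correct and for most of the lemma tracks the paper closely: the $L^1$ equivalence via $\int_X x_k^2\,d\mu=\lambda_k$ and monotone convergence, the $W^{2,p}$ membership via convergence of the partial sums (the paper obtains the $L^p$ bounds by a direct moment estimate $\int_X G^n\,d\mu\le C_n(\sum\lambda_k\alpha_k)^n$ rather than hypercontractivity, but the content is the same), and the integrability of $1/|D_HG|_H$ by bounding $|D_HG|_H^2\ge c\sum_{k\in S}x_k^2$ for a finite index set $S$ with $\alpha_k>0$ and $|S|>p$, exactly as you eventually do after your self-correction (``not eventually vanishing'' guarantees infinitely many nonzero $\alpha_k$, so $|S|$ can be taken as large as needed).

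One small imprecision: for quasicontinuity you invoke Section~\ref{Notation and preliminaries}, but that only provides a quasicontinuous \emph{version} of $G$; the lemma asserts that the explicit pointwise sum $\sum_k\alpha_k x_k^2$ is itself $C_{2,p}$-quasicontinuous. The paper fills this by noting that $G$ is continuous when $(\alpha_k)$ is bounded, and otherwise rerunning the argument of \cite[Thm.~5.9.6]{Boga} with the continuous partial sums $F_n=\sum_{k\le n}\alpha_k x_k^2$.

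The genuine methodological difference is in showing $\mu(E_r)>0$. You split the coordinates into a finite block $S$ and its complement, factor by independence, and make the tail factor positive via Chebyshev once $\sum_{k\notin S}\alpha_k\lambda_k<r^2/2$; this is short and elementary. The paper instead changes variables $y_j=\sqrt{\alpha_{k_j}}\,x_{k_j}$ on the finite-dimensional marginals to identify $\mu(E_r)$ with $\tilde\mu(B(0,r))$ for an auxiliary nondegenerate Gaussian $\tilde\mu$ on another Hilbert space $\tilde X$. Your route is quicker and avoids the auxiliary space; the paper's route yields the exact identity $\mu(E_r)=\tilde\mu(B(0,r))$, not needed for the lemma but structurally sharper.
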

\begin{proof}
The fact that $G\in L^1(X, \mu)$ iff \eqref{ns} holds follows immediately from the equality $\int_X x_k^2\,d\mu = \lambda_k$ for every $k\in \N$. In this case $G\in L^p (X, \mu)$ for every $p$, since for every $n\in \N$
we have $\int_X (G(x)^n)d\mu \leq C_n (\sum_{k=1}^{\infty}\lambda_k\alpha_k)^n$ for some $C_n>0$. Moreover, for every $k\in \N$ we have $D_kG(x)= 2\sqrt{\lambda_k} \alpha_kx_k$ and $D_{hk}G(x)= \lambda_k\alpha_k $ if $h=k$, $D_{hk}G(x)= 0$ if $h\neq k$, therefore 
$G \in W^{2,p}(X, \mu)$ for every $p$. 

\vspace{1mm}
Let us prove that $1/|D_HG(x)|_H \in L^p(X, \mu)$ for every $p>1$. 
% Set $k_1= \min\{ k\in \N:\; \alpha_k>0\}$, $k_{n+1}= \min\{ k\in \N:\; k>k_n, \; \alpha_k>0\}$, so that  
Let $(\alpha_{k_n})$ be any subsequence  of  $(\alpha_k)$ assuming strictly positive values. 
Fix $p>1$ and let $n\in \N$, $n>p$. Then
$$|D_HG(x)|_H^p = 2^p\bigg(\sum_{k=1}^{\infty}\lambda_k\alpha_k^2 x_k^2\bigg)^{p/2}\geq c_{n,p}
\bigg(\sum_{k=1}^{n} x_{k_j}^2\bigg)^{p/2}, $$
with $c_{n,p}:= 2^p(\min\{ \lambda_{k_j}\alpha_{k_j}^2:\; j=1, \ldots , n\})^{p/2}$. Therefore, 
$$\int_X \frac{1}{|D_HG(x)|_H^p}\,d\mu \leq  \frac{1}{c_{n,p}}\frac{1}{(2\pi)^{n/2}}  \frac{1}{(\Pi_{j=1}^{n} \lambda_{k_j})^{1/2}}\int_{\R^n} \bigg(\sum_{k=1}^{n} x_{k_j}^2\bigg)^{-p/2}e^{-\sum_{j=1}^nx_{k_j}^2/2\lambda_{k_j}}dx_{k_1}\cdots dx_{k_n}$$
which is finite since $n>p$.

\vspace{1mm}
If the sequence $(\alpha_k)$ is bounded, then $G$ is continuous. If $(\alpha_k)$ is unbounded, to prove that $G$ is $C_{2,p}$-quasicontinuous it is sufficient to follow the proof of \cite[Thm. 5.9.6]{Boga}, taking 
$F_n = \sum_{k=1}^n \alpha_k x_k^2$, $f_n = F_n- L_pF_n$, $T= (I-L_p)^{-1}$, where $L_p$ denotes as usual the realization of the Ornstein--Uhlenbeck operator in $L^p(X, \mu)$, for any $p>1$. 

\vspace{1mm}
Let us prove that $\mu(E_r)>0$ for every $r>0$. 
As before, let  $(\alpha_{k_n})$ be any subsequence  of  $(\alpha_k)$ assuming strictly positive values. 
Let $\widetilde{X} =$ span$\,\{e_{k_n}:\; n\in \N\}$ be endowed with the scalar product of $X$ and with the Gaussian measure $\widetilde{\mu} :=N_{0,\widetilde{Q}}$, where $\widetilde{Q}$ is the diagonal operator defined by
$\widetilde{Q}e_{k_n} := \lambda_{k_n} \alpha_{k_n} e_{k_n}$, that has finite trace by \eqref{ns}.  

Setting $E_{r,n}:= \{ x\in X:\; \sum_{k=1}^{k_n} \alpha_kx_k^2 \leq r^2\}$, we have $E_{r,n+1}\subset E_{r,n}$ for every $n$ and $E_{r} = \cap_{n\in \N} E_{r,n}$, so that $\mu(E_r) = \lim_{n\to \infty} \mu( E_{r,n})$. Similarly, denoting by $B(0,r)$ the ball centered at $0$ with radius $r$ in $\widetilde{X}$ and
setting $B_{r,n}:= \{ x\in \widetilde{X}:\; \sum_{j=1} ^n x_{k_j}^2 \leq r^2\}$,
we have $ \widetilde{\mu}(B(0,r)) = \lim_{n\to \infty} \widetilde{\mu}( B_{r,n})$. 
On the other hand, for every $n\in \N$ we have
$$\begin{array}{lll}
\mu (E_{r,n}) & = & \ds \frac{1}{(2\pi)^{k_n/2}} \frac{1}{(\Pi_{k=1}^{k_n} \lambda_k)^{1/2}}
\int_{\{x=(x_1, \ldots, x_{k_n})\in \R^{k_n}:\; \sum_{k=1}^{k_n} \alpha_k x_k^2 \leq r^2 \}} e^{-\sum_{k=1}^{k_n}x_k^2/2\lambda_k}dx_{1}\ldots dx_{k_n}
\\
\\
& = & \ds \frac{1}{(2\pi)^{n/2}} \frac{1}{(\Pi_{j=1}^n \lambda_{k_j} )^{1/2}}
\int_{\{ x=(x_{k_1}, \ldots, x_{k_n}) \in \R^n: \; \sum_{j=1}^{n} \alpha_{k_j} x_{k_j}^2 \leq r^2 \}} 
 e^{-\sum_{j=1}^nx_{k_j}^2/2\lambda_{k_j}}dx_{k_1}\ldots dx_{k_n}
\end{array}$$
and changing variables, $y_j = \sqrt{\alpha_{k_j}}x_{k_j}$ for $j=1, \ldots, n$, we get 
$$\mu (E_{r,n})   = \frac{1}{(2\pi)^{n/2}} \frac{1}{(\Pi_{j=1}^n \lambda_{k_j}\alpha_{k_j})^{1/2}}
\int_{\{ y\in \R^n: \; \sum_{j=1}^{n} y_{j}^2 \leq r^2 \}} 
 e^{-\sum_{j=1}^ny_{j}^2/2\lambda_{k_j}\alpha_{k_j}}dy_{1}\cdots dy_{n}
 = \widetilde{\mu}( B_{r,n}).$$
Letting $n\to \infty$ we obtain $\mu(E_r) =  \widetilde{\mu}(B(0,r)) >0$, since $\widetilde{\mu}$ is non degenerate in $ \widetilde{X}$. 
\end{proof}

By Lemma \ref{serie}, if condition \eqref{ns} holds and $\alpha_k > 0$ for infinitely many $k$, the function $x\mapsto G(x) - r^2$ satisfies Hypothesis \ref{HypG}. Let us give a significant example. 
%two significant examples, both of them with $X=L^2((0, 1), d\xi)$. 

\begin{Example}
{\em Let $X=L^2((0, 1), d\xi)$ and let $-A$ be the realization of the second order derivative with Dirichlet boundary condition, i.e. $D(A)=W^{2,2}((0, 1), d\xi) \cap W^{1,2}_0((0, 1), d\xi)$, $Ax = -x''$. As orthonormal basis of $X$ we choose the set of the eigenfunctions of $A$, $e_k(\xi) := \sqrt{2}\sin(k \pi \xi)$, $k\in \N$, with eigenvalues $(\pi k)^2$. For every $\beta >0$ we have 
$$D(A^{\beta}) = \{ x\in X:\; \sum_{k=1}^{\infty} k^{4\beta}x_k^2 <\infty\}, \quad A^{\beta}x =  \sum_{k=1}^{\infty} (\pi k)^{2\beta}x_ke_k. $$
The open ball  centered at $0$ with radius $r$ in $D(A^{\beta})$ is denoted by $B_{\beta}(0,r)$. Moreover we set
$$G(x) =  \|A^{\beta}x\|^2 - r^2,$$
so that $G^{-1}(-\infty, 0) = B_{\beta}(0,r)$. 

(i) We consider  the Gaussian measure $\mu $ in $X$ with mean $0$ and covariance $Q :=  \frac12\,A^{-1}$. Then, the eigenvalues of $Q$ are $\lambda_k = 1/(2k^2 \pi^2)$. 
Choosing $\alpha_k = (\pi k)^{4\beta}$ with $\beta < 1/4$, condition \eqref{ns} is satisfied. By Lemma \ref{serie},  the function $G$ satisfies Hypothesis \ref{HypG}. Moreover, $|D_HG(x)|_H = 2 \|A^{-1+2\beta}x\|$. The integration   formula \eqref{partiW1p} on $\O = B_{\beta}(0,r)$ reads as
$$\int_{B_{\beta}(0,r)} D_k\varphi \,d\mu =  \sqrt{2}\pi k \int_{B_{\beta}(0,r)} x_k\varphi \,d\mu 
+ \frac{1}{(\pi k)^{1-2\beta}} \int_{\partial B_{\beta}(0,r)} \frac{x_k}{\|A^{-1+2\beta}x\|}\,\Tr \varphi\, d\rho,   $$
for every  $\varphi\in W^{1,p}(B_{\beta}(0,r), \mu) $ and $k\in \N$. Here $\partial B_{\beta}(0,r) = G^{-1}(0)$ is the boundary of $B_{\beta}(0,r)$ in $D(A^{\beta})$.

(ii) Next, we choose as $\mu$ the Gaussian measure with mean $0$ and covariance $Q :=  \frac12\,A^{-2}$. The eigenvalues of $Q$ are in this case $\lambda_k = 1/(2k^4 \pi^4)$. Choosing again $\alpha_k = (\pi k)^{4\beta}$ with $\beta < 3/4$, condition \eqref{ns} is satisfied. So,  the function $G $ satisfies Hypothesis \ref{HypG},  and $|D_HG(x)|_H = 2 \|A^{-2+2\beta}x\|$. The integration   formula \eqref{partiW1p} reads now as
$$\int_{B_{\beta}(0,r)} D_k\varphi \,d\mu =  \sqrt{2}\pi^2 k^2 \int_{B_{\beta}(0,r)} x_k\varphi \,d\mu 
+ \frac{1}{(\pi k)^{2-2\beta}} \int_{\partial B_{\beta}(0,r)} \frac{x_k}{\|A^{-2+2\beta}x\|}\,\Tr \varphi\, d\rho, $$
for every  $\varphi\in W^{1,p}(B_{\beta}(0,r), \mu) $ and $k\in \N$. }
\end{Example}

\section{Acknowledgements}

It is a pleasure to thank  L. Tubaro for hours of discussions on the subject of this paper  and on related items.
 
%%%%%%%%%%%%%%%%%%%%%%%%%%%%%%%%%%%%%%%%%%%%%%%%%%%%%%%%%%%%%


\begin{thebibliography}{99}
%%%%%%%%%%%%%%%%%%%%%%%%%%%%%%%%%%%%%%%%%%%%%%%%%%%%%%%%%%%%%

\bibitem{AM}
H. Airault,  P. Malliavin, \textit{Int\'egration g\'eom\'etrique sur l'espace de Wiener},  Bull. Sci. Math. {\bf 112} (1988),  3--52. 

\bibitem{AVN} J. Assaad, J. Van Neerven, \textit{$L^2$-theory for non non-symmetric Ornstein--Uhlenbeck semigroups on domains}, to appear in J. Evol. Equ. Published online, DOI 10.1007/s00028-012-0171-1. 

\bibitem{Diego0}
L. Ambrosio,  M. Miranda Jr., S. Maniglia,  D. Pallara, 
\textit{BV functions in abstract Wiener spaces}, J. Funct. Anal. {\bf 258} (2010), 785--813. 

\bibitem{Diego}
L. Ambrosio,  M. Miranda Jr.,  D. Pallara, 
\textit{Sets of finite perimeter in Wiener spaces, perimeter measure and boundary rectifiability}, Discr. Cont. Dynam. Systems {\bf 28} (2010), 591--608. 

\bibitem{BDPT1} V.  Barbu, G. Da Prato, L. Tubaro,  \textit{ Kolmogorov equation associated to the stochastic reflection problem
on a smooth convex set of a Hilbert space}, Ann. Probab. {\bf 37} (2009), 1427--1458. 

\bibitem{BDPT2} V.  Barbu, G. Da Prato, L. Tubaro,   \textit{Kolmogorov equation associated to the stochastic reflection problem on a smooth convex set of a Hilbert space II},  Ann. Inst. H. Poincar\'e Probab. Stat. {\bf 47} (2011), 699--724.
   
\bibitem{Boga}  V.I. Bogachev, \textit{Gaussian Measures}, American Mathematical Society, Providence, 1998.

\bibitem{DP} G. Da Prato, \textit{An introduction to infinite dimensional analysis}, Springer-Verlag, Berlin 2006. 

\bibitem{DPL} G. Da Prato, A. Lunardi, \textit{On the Dirichlet semigroup for Ornstein--Uhlenbeck 
operators in subsets of Hilbert spaces}, J. Funct. Anal. {\bf 259} (2010), 2642--2672. 

\bibitem{DPL2} G. Da Prato, A. Lunardi, \textit{Sobolev regularity for a class of second order elliptic PDE's in infinite dimension}, to appear in J. Maths. Pures Appl. Published online, DOI 10.1016/j.matpur.2012.10.008. 

\bibitem{DPZ} G. Da Prato, J. Zabczyk, \textit{Second Order Partial Differential Equations in Hilbert spaces},
London  Math. Soc. Lect. Notes no. 293, Cambridge University Press, Cambridge (2002).

\bibitem{EG}   L.C. Evans, R.F. Gariepy, \textit{Measure Theory and Fine Properties of Functions},  CRC Press, Boca Raton (1999). 

\bibitem{Feyel}   D. Feyel,  \textit{Hausdorff-Gauss Measures}, in: Stochastic Analysis and Related Topics, VII. Kusadasi 1998, Progr. in Probab. 98, Birkh\"auser, Boston 2001, 59--76. 

\bibitem{FP1}   D. Feyel,  A. de La Pradelle, \textit{Capacit\`es Gaussiennes}, Ann. Inst. Fourier {\bf 41} (1991), 49--76. 
 
\bibitem{FP}   D. Feyel,  A. de La Pradelle, \textit{ Hausdorff measures on the Wiener space}, Pot. Analysis {\bf 1} (1992), 177--189. 
 
 \bibitem{FH} M. Fukushima, M. Hino, \textit{On the space of BV functions and a related stochastic calculus in infinite dimensions},
J. Funct. Anal. {\bf 183} (2001), 245-268.
 
\bibitem{Hino} M. Hino, \textit{Sets of finite perimeter and the Hausdorff--Gauss measure on the Wiener space}, J. Funct. Anal. {\bf 258} (2010), 1656--1681.

% \bibitem{McS} E.J. McShane, \textit{Extension of range of functions}, Bull. Amer. Math. Soc. {\bf 40} (1934), 837--842. 


\bibitem{Tr}
H. Triebel,  \textit{Interpolation Theory, Function Spaces, 
Differential Operators}, North-Holland, Amsterdam, 1978.

\end{thebibliography}
\end{document}